\documentclass[12pt]{amsart}
\headheight=15pt \topmargin=15pt \textwidth=435pt \textheight=625pt
\oddsidemargin=8pt \evensidemargin=8pt
\usepackage{latexsym, amsmath, amssymb}
\usepackage{epsfig}
\usepackage{amscd}
\usepackage{pstricks,pst-node,cite}

\newtheorem{thm}{Theorem}[section]
\newtheorem{lem}[thm]{Lemma}

\newtheorem{cor}[thm]{Corollary}

\newtheorem{rem}[thm]{Remark}
\theoremstyle{remark}
\newcommand{\F}{\mathcal{F}}

\newcommand{\zz}{\mathbb {Z}}
\newcommand{\pol}{\hbox{ }}
\newcommand{\cp}{\nabla}

\ifx\blackandwhite\undefined
  \psset{linecolor=blue}\newrgbcolor{darkred}{.9 0 0}\newrgbcolor{emgreen}{0 .9 0}
  \newrgbcolor{lightgreen}{.7  .99 .7}\newrgbcolor{lightred}{.99 .7 .7} \else
  \psset{linecolor=blue}\newgray{darkred}{.7}\newgray{emgreen}{0.3}\newgray{pup}{.5}\newgray{xxx}{.5}
\fi
 \psset{linewidth=1pt,dash=3pt 3pt,doublesep=.05,dotsize=1pt 5}
\SpecialCoor

\begin{document}

\author{Dongseok Kim}
\address{Department of Mathematics \\Yeungnam University \\Kyongsan, 712-749, Korea} \email{dongseok@yumail.ac.kr}
\thanks{}

\author{Jaeun Lee}
\address{Department of Mathematics \\Yeungnam University \\Kyongsan, 712-749, Korea}
\email{julee@yu.ac.kr}
\thanks{The first author was supported by Korea Research Foundation Grant funded by
Korea Government (MOEHRD, Basic Research Promotion Fund)
(KRF-2006-351-C00005). The second author was supported in part by
Com$^2$MaC-KOSEF(R11-1999-054)}

\subjclass[2000]{57M25, 57M27} \keywords{pretzel links, Conway
polynomial, Seifert surfaces, genus, basket number}

\title[Some invariants of pretzel links]{Some invariants of pretzel links}
\begin{abstract}
We show that nontrivial classical pretzel knots $L(p,q,r)$ are
hyperbolic with eight exceptions which are torus knots. We find
Conway polynomials of $n$-pretzel links using a new
computation tree. As applications, we compute
the genera of $n$-pretzel links using these polynomials and find the
basket number of pretzel links by showing that the genus and
the canonical genus of a pretzel link are the same.
\end{abstract}

\maketitle

\section{Introduction}

Let $L(p_1, p_2, \ldots, p_n)$ be an \emph{$n$-pretzel link} in
$\mathbb{S}^3$ where $p_i \in \zz$ represents the number of half
twists as depicted in Figure \ref{npretzel}. In particular, if
$n=3$, it is called a \emph{classical pretzel link}, denoted by
$L(p,q,r)$. If $n$ is odd, then an $n$-pretzel link $L(p_1,p_2,
\ldots, p_n)$ is a knot if and only if none of two $p_i$'s are even,
a pretzel knot is denoted by $K(p_1,p_2, \ldots, p_n)$. If $n$ is
even, then $L(p_1,p_2, \ldots, p_n)$ is a knot if and only if only
one of the $p_i$'s is even. Generally the number of even $p_i$'s is
the number of components, unless $p_i$'s are all odd. Since pretzel
links have nice structures, they have been studied extensively. For
example, several polynomials of pretzel links have been
calculated~\cite{Hara:Qpoly, HNO:Conway, Landvoy:Jones,
Nakagawa:Alexander}. Y. Shinohara calculated the signature of
pretzel links~\cite{Shinohara:sign}. Two and three fold covering
spaces branched along pretzel knots have been
described~\cite{Bedient:double, HN:branch}. For notations and
definitions, we refer to \cite{Adams:knotbook}.

A link $L$ is \emph{almost alternating} if it is not alternating and
there is a diagram $D_L$ of $L$ such that one crossing change makes
the diagram alternating; we call $D_L$ an \emph{almost alternating
diagram}. One of the classifications of links is that they are classified
by hyperbolic, torus or satellite links~\cite{Adams:knotbook}. First
we show that classical pretzel links are prime and either alternating or
almost alternating. Menasco has shown that prime alternating knots
are either hyperbolic or torus knots~\cite{Menasco:alternating}. It
has been generalized by Adams that prime almost alternating knots
are either hyperbolic or torus knots~\cite{Adams:almost}. It is known that no
satellite knot is an almost alternating knot~\cite{Hoste}. Thus, we can classify classical pretzel knots
completely by hyperbolic or torus knots.

\begin{figure}
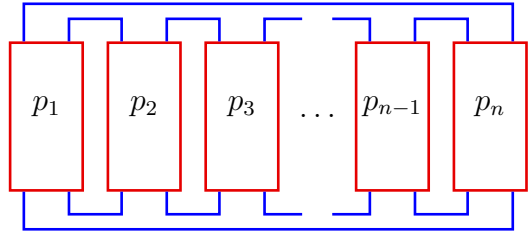

$$
\pspicture[.4](-4.3,-1.7)(3,1.7)
\psframe[linecolor=darkred](-4.1,-1)(-3.1,1) \rput[b](-3.6,0){$p_1$}
\psframe[linecolor=darkred](-2.8,-1)(-1.8,1) \rput[b](-2.3,0){$p_2$}
\psframe[linecolor=darkred](-1.5,-1)(-.5,1) \rput[b](-1,0){$p_3$}
\rput[b](0,0){$\ldots$} \psframe[linecolor=darkred](.5,-1)(1.5,1)
\rput[b](1,0){$p_{n-1}$} \psframe[linecolor=darkred](1.8,-1)(2.8,1)
\rput[b](2.3,0){$p_n$}  \psline(-3.9,1)(-3.9,1.5)(2.6,1.5)(2.6,1)
\psline(-3.3,1)(-3.3,1.3)(-2.6,1.3)(-2.6,1)
\psline(-2,1)(-2,1.3)(-1.3,1.3)(-1.3,1)
\psline(-.7,1)(-.7,1.3)(-.2,1.3) \psline(.2,1.3)(.7,1.3)(.7,1)
\psline(1.3,1)(1.3,1.3)(2,1.3)(2,1)
\psline(-3.9,-1)(-3.9,-1.5)(2.6,-1.5)(2.6,-1)
\psline(-3.3,-1)(-3.3,-1.3)(-2.6,-1.3)(-2.6,-1)
\psline(-2,-1)(-2,-1.3)(-1.3,-1.3)(-1.3,-1)
\psline(-.7,-1)(-.7,-1.3)(-.2,-1.3) \psline(.2,-1.3)(.7,-1.3)(.7,-1)
\psline(1.3,-1)(1.3,-1.3)(2,-1.3)(2,-1)
\endpspicture
$$
\caption{An $n$-pretzel link $L(p_1, p_2, \ldots , p_n)$}
\label{npretzel}
\end{figure}

Let $L$ be a link in $\mathbb{S}^3$. A compact orientable surface
$\F$ is a \emph{Seifert surface} of $L$ if the boundary of $\F$ is
$L$. The existence of such a surface was first proven by Seifert
using an algorithm on a diagram of $L$, named after him as
\emph{Seifert's algorithm}~\cite{Seifert:def}. The \emph{genus} of a
link $L$ can be defined by the minimal genus among all Seifert
surfaces of $L$, denoted by $g(L)$. A Seifert surface $\F$ of $L$
with the minimal genus $g(L)$ is called a \emph{minimal genus
Seifert surface} of $L$. A Seifert surface of $L$ is
\emph{canonical} if it is obtained from a diagram of $L$ by applying
Seifert's algorithm. Then the minimal genus among all canonical
Seifert surfaces of $L$ is called the \emph{canonical genus} of $L$,
denoted by $g_c(L)$.  A Seifert surface $\F$ of $L$ is said to be
\emph{free} if the fundamental group of the complement of $\F$,
namely, $\pi_1(\mathbb{S}^3 - \F)$ is a free group. Then the minimal
genus among all free Seifert surfaces of $L$ is called the
\emph{free genus} for $L$, denoted by $g_f (L)$. Since any canonical
Seifert surface is free, we have the following inequalities,
$$g(L)\le g_f(L) \le g_c(L).$$
There are many interesting results about the above
inequalities~\cite{Brittenham:free, Crowell:genus, KK,
Moriah:freegenus, Nakamura, sakuma:minimal}. Gabai has geometrically
shown that the minimal genus Seifert surface of $n$-pretzel links
can be found as a Murasugi sum using Thurston norms and proved that
the Seifert surfaces obtained by applying Seifert's algorithm to
the standard diagram of $L(2k_1+1,2k_2+1,\ldots ,2k_n+1)$ and
$L(2k_1,2k_2,\ldots ,2k_n)$ are minimal genus Seifert
surfaces~\cite{Gabai:genera}. There is a classical inequality
regarding the Alexander polynomial and the genus $g(L)$ of a link
$L$: G. Torres showed the following inequality,
\begin{align}
2g(L) \ge \mathrm{degree} \Delta_L - \mu +1 \label{genineqal}
\end{align}
where $\Delta_L$ is the Alexander polynomial of $L$ and $\mu$ is the
number of components of $L$ \cite{Torres:Alexander}. R. Crowell
showed that the equality in inequality~(\ref{genineqal}) holds for
alternating links~\cite{Crowell:genus}. Cimasoni has found a similar
inequality from multi-variable Alexander
polynomials~\cite{cimasoni:conway}. In fact, we can find the genera
of oriented $n$-pretzel links from the inequality~(\ref{genineqal})
and the Conway polynomial found in section~\ref{poly}, $i. e.$, we
will show that the equality in inequality~(\ref{genineqal}) holds
for all $n$-pretzel links with at least one even crossing. For
pretzel links $L(2k_1,2k_2,\ldots ,2k_n)$ with all possible
orientations, Nakagawa showed that a genus and a canonical genus are
the same~\cite{Nakagawa:Alexander}. The idea of Nakagawa
~\cite{Nakagawa:Alexander} can be extended to arbitrary $n$-pretzel
links, $i. e.$, we can show that these three genera $g(L), g_f(L)$
and $ g_c(L)$ are the same.

Some of Seifert surfaces of links feature extra structures. Seifert
surfaces obtained by plumbings annuli have been studied extensively
for the fibreness of links and surfaces \cite{Gabai:murasugi1,
Gabai:murasugi2, Gabai:genera, harer:const, Morton:alexander,
Nakamura, Rudolph:quasipositive2, Stallings:const}. Rudolph has
introduced several plumbed Seifert surfaces~\cite{Rudolph:plumbing}.
Let $A_n \subset \mathbb{S}^3$ denote an $n$-twisted unknotted
annulus. A Seifert surface $\F$ is a \emph{basket surface} if $\F =
D_2$ or if $\F = \F_0
*_{\alpha} A_n$ which can be constructed by plumbing $A_n$ to a
basket surface $\F_0$ along a proper arc $\alpha \subset D_2 \subset \F_0$
\cite{Rudolph:plumbing}. A \emph{basket number} of a link $L$,
denoted by $bk(L)$, is the minimal number of annuli used to obtain a
basket surface $\F$ such that $\partial \F=L$~\cite{BKP,
HW:plumbing}. As a consequence of the results in
section~\ref{genera} and a result~\cite[Corollary 3.3]{BKP}, we find
the basket number of $n$-pretzel links.

The outline of this paper is as follows. In section~\ref{pqr}, we
mainly focus on the classical pretzel links $L(p,q,r)$. We find
Conway polynomial of $n$-pretzel links in
section~\ref{poly}. In section~\ref{genera}, we study the genera of
$n$-pretzel links. In section~\ref{appl}, we compute the basket
number of $n$-pretzel links.

\section{Classical pretzel links $L(p,q,r)$} \label{pqr}

\subsection{Almost alternating}

One can see that $L(p,q,r)$ is alternating if $p,q,r$ have the same
signs. Since every alternating link (including any unlink) has an
almost alternating diagram, we are going to show that every
nontrivial pretzel link has an almost alternating diagram. Since the
notation depends on the choice of $+,-$ crossings, it is sufficient
to show that $L(-p,q,r)$ has an almost alternating diagram where $p,
q, r$ are positive. In particular, one might expect that $L(-1, q, r)$ is almost
alternating, but surprisingly it is also alternating.

\begin{thm} For positive integers $p ,q$ and $r$, $L(-1,q,r)$ is an alternating
link and $L(-p,q,r)$ has an almost alternating diagram. \label{alterthm}
\end{thm}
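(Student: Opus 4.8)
The plan is to prove this by explicit diagrammatic manipulation, exhibiting the required diagrams directly. The key observation is that a pretzel link's "alternating-ness" is entirely determined by how the three twist regions interact at the two points where strands pass between adjacent tangles, so I would analyze crossings locally and track whether the over/under pattern alternates as one traverses each component.

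First I would treat the case $L(-1,q,r)$. Here the $-1$ tangle consists of a single crossing of opposite sign to the $q$ and $r$ tangles. My strategy is to perform a local isotopy (a flype or a Reidemeister-type move) that absorbs this single negative crossing into one of the positive twist regions, thereby reducing the diagram to a standard alternating pretzel-type diagram. Concretely, I would show that the single crossing in the $-1$ slot, when combined with the adjacent strands of the $q$-region, can be slid so that the whole diagram becomes one in which all crossings have consistent sign in the alternating sense; the $-1$ twist effectively cancels against the connecting arcs rather than contributing a genuine "wrong" crossing. I expect to verify alternation by checking, along each strand of the link, that overpasses and underpasses strictly alternate in the resulting diagram.

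For the general case $L(-p,q,r)$ with $p\ge 1$, the plan is to start from the standard diagram and observe that within the $-p$ twist region the $p$ crossings already alternate among themselves, and likewise the $q$ and $r$ regions are internally alternating; the only failures of alternation occur at the junctions between the $-p$ region and its positive neighbors. The idea is to apply a single crossing change at one strategically chosen crossing — most naturally one of the crossings in the $-p$ region adjacent to a junction — so that after the change the over/under pattern matches up across that junction. I would then verify that with this one crossing switched, the entire diagram alternates, which is exactly the definition of an almost alternating diagram (provided the original link is not itself alternating, and $L(-p,q,r)$ with $p\ge 1$ is genuinely non-alternating when $p,q,r$ do not all share a sign).

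The main obstacle I anticipate is the bookkeeping at the junctions: I must carefully fix orientations and a consistent convention for "$+$" and "$-$" half-twists so that the claim "one crossing change restores alternation" is genuinely true and not off by a parity. The subtle point is that switching one crossing inside the $-p$ region repairs the alternation at one junction but could potentially disturb it at the other junction or internally within the $-p$ region; I would need to confirm that the single switch simultaneously resolves the global pattern. I expect this is where the special role of the $-1$ case emerges — with $p=1$ the single crossing sits between both junctions and the would-be crossing change is unnecessary because the diagram is already alternating, whereas for $p\ge 2$ one genuine switch is both necessary and sufficient. Handling this parity/junction analysis cleanly, rather than the isotopies themselves, is the crux of the argument.
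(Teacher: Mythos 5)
Your treatment of $L(-1,q,r)$ is in the same spirit as the paper's: the paper exhibits an explicit isotopy $L(q,-1,r)\cong L(q-2,1,r-2)$ (Figure~\ref{fig2-1}), after which all twist parameters have the same sign and the standard pretzel diagram is alternating. Your plan to ``absorb the $-1$ into a neighboring positive region'' is a fair description of that move, though you would still have to produce the isotopy explicitly rather than assert it.

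The second part has a genuine gap. You propose to take the \emph{standard} diagram of $L(-p,q,r)$ and change one crossing inside the $-p$ region. Count the ``bad'' edges, i.e.\ the arcs of the diagram whose two ends are both overpasses or both underpasses: within each twist region consecutive crossings already alternate, so the only bad edges are the four connecting arcs joining the top and the bottom of the $-p$ region to the two positive regions (two at the top, two at the bottom). A crossing change at a crossing $x$ flips the over/under status of the four edge-ends at $x$, hence it repairs exactly those bad edges with exactly one endpoint at $x$ and spoils every good edge with exactly one endpoint at $x$. When $p\ge 2$, the topmost crossing of the $-p$ region meets only the two top junction arcs, while its two lower edges are good internal edges and the two bottom junction arcs are untouched; so the change fixes the top, breaks the two internal edges below, and leaves the bottom bad. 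No single crossing of the standard diagram is incident to all four bad arcs, so no single crossing change makes the standard diagram alternating: the difficulty you flag as ``the crux'' is not bookkeeping but a genuine obstruction. This is precisely why the paper does \emph{not} use the standard diagram: in Figure~\ref{fig2-2} a strand of the negative tangle is first rerouted around the outside of the diagram by an isotopy, yielding a different diagram of the same link in which all failures of alternation are concentrated at a single crossing. That preliminary isotopy is the missing idea in your proposal; only for $p=1$ does the standard diagram itself sit one crossing change away from alternating, and there the stronger conclusion holds that the link is alternating.
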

\begin{proof} One can see that $L(q,-1,r)$ is isotopic to
$L(q-2,1,r-2)$ as shown in Figure~\ref{fig2-1}. For the second part,
see Figure~\ref{fig2-2}.
\end{proof}

\begin{figure}
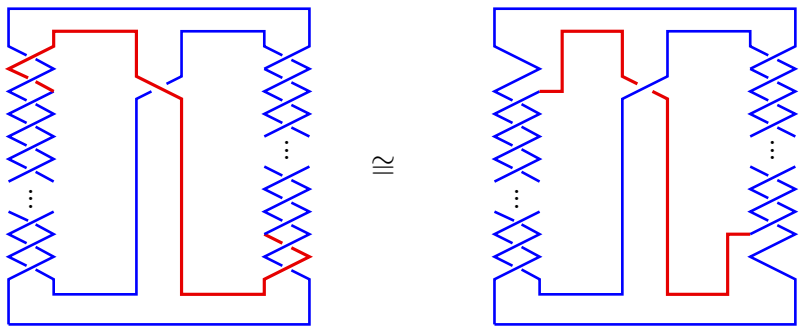

$$
\pspicture[.45](-2.7,-2.4)(2.2,2.4)
\psline(-2,-2.2)(-2,-1.6)(-1.4,-1.3)(-1.64,-1.17)
\psline(-1.76,-1.12)(-2,-1)(-1.4,-.7) \psline(-1.74,-.82)(-2,-.7)
\psline(-1.76,-1.42)(-2,-1.3)(-1.4,-1)(-1.64,-.87)
\psline(-2,-.3)(-1.4,0)(-1.64,.13) \psline(-1.4,-.3)(-1.64,-.17)
\psline(-1.76,-.12)(-2,0)(-1.4,.3)(-1.64,.43)
\psline(-1.76,.18)(-2,.3)(-1.4,.6)(-1.64,.73)
\psline(-1.76,.48)(-2,.6)(-1.4,.9)
\psline(-1.76,.78)(-2,.9)(-1.4,1.2)(-1.64,1.33)
\psline(-1.76,1.38)(-2,1.5)(-2,2)(2,2)(2,1.5)(1.4,1.2)
\psline(-1.64,-1.47)(-1.4,-1.6)(-1.4,-1.8)(-.3,-1.8)(-.3,.8)(-.1,.9)
\psline(.1,1)(.3,1.1)(.3,1.7)(1.4,1.7)(1.4,1.5)(1.64,1.38)
\psline[linewidth=1.2pt,linecolor=darkred](-1.4,.9)(-1.64,1.03)
\psline[linewidth=1.2pt,linecolor=darkred](1.4,-1)(1.64,-1.12)
\psline[linewidth=1.2pt,linecolor=darkred](-1.74,1.08)(-2,1.2)(-1.4,1.5)
(-1.4,1.7)(-.3,1.7)(-.3,1.1)(.3,.8)(.3,-1.8)(1.4,-1.8)(1.4,-1.6)(2,-1.3)(1.76,-1.18)
\psline(-2,-2.2)(2,-2.2)(2,-1.6)(1.76,-1.48)
\psline(1.64,-1.42)(1.4,-1.3)(2,-1)(1.76,-.88)
\psline(1.4,-1)(2,-.7)(1.76,-.58)
\psline(1.64,-.82)(1.4,-.7)(2,-.4)(1.76,-.28)
\psline(1.64,-.52)(1.4,-.4)(2,-.1) \psline(1.64,-.22)(1.4,-.1)
\psline(1.76,1.32)(2,1.2)(1.4,.9)(1.64,.78)
\psline(1.76,1.02)(2,.9)(1.4,.6)(1.64,.48)
\psline(1.76,.72)(2,.6)(1.4,.3) \psline(1.76,.42)(2,.3)
\psline(1.4,1.2)(1.64,1.08)
\rput[b](-1.7,-.45){$.$}\rput[b](-1.7,-.55){$.$}\rput[b](-1.7,-.65){$.$}
\rput[b](1.7,0){$.$}\rput[b](1.7,.1){$.$}\rput[b](1.7,.2){$.$}
\endpspicture \hskip .5cm \cong \hskip .5cm \pspicture[.45](-2.7,-2.4)(2.2,2.4)
\psline(-2,-2.2)(-2,-1.6)(-1.4,-1.3)(-1.64,-1.17)
\psline(-1.76,-1.12)(-2,-1)(-1.4,-.7) \psline(-1.74,-.82)(-2,-.7)
\psline(-1.76,-1.42)(-2,-1.3)(-1.4,-1)(-1.64,-.87)
\psline(-2,-.3)(-1.4,0)(-1.64,.13) \psline(-1.4,-.3)(-1.64,-.17)
\psline(-1.76,-.12)(-2,0)(-1.4,.3)(-1.64,.43)
\psline(-1.76,.18)(-2,.3)(-1.4,.6)(-1.64,.73)
\psline(-1.76,.48)(-2,.6)(-1.4,.9)
\psline(-1.76,.78)(-2,.9)(-1.4,1.2)(-2,1.5)(-2,2)(2,2)(2,1.5)(1.4,1.2)
\psline(-1.64,-1.47)(-1.4,-1.6)(-1.4,-1.8)(-.3,-1.8)(-.3,.8)(.3,1.1)(.3,1.7)(1.4,1.7)(1.4,1.5)(1.64,1.38)
\psline[linewidth=1.2pt,linecolor=darkred](-1.4,.9)(-1.1,.9)(-1.1,1.7)(-.3,1.7)(-.3,1.1)(-.1,1)
\psline[linewidth=1.2pt,linecolor=darkred](.1,.9)(.3,.8)(.3,-1.8)(1.1,-1.8)(1.1,-1)(1.4,-1)
\psline(-2,-2.2)(2,-2.2)(2,-1.6)(1.4,-1.3)(2,-1)(1.76,-.88)
\psline(1.4,-1)(2,-.7)(1.76,-.58)
\psline(1.64,-.82)(1.4,-.7)(2,-.4)(1.76,-.28)
\psline(1.64,-.52)(1.4,-.4)(2,-.1) \psline(1.64,-.22)(1.4,-.1)
\psline(1.76,1.32)(2,1.2)(1.4,.9)(1.64,.78)
\psline(1.76,1.02)(2,.9)(1.4,.6)(1.64,.48)
\psline(1.76,.72)(2,.6)(1.4,.3) \psline(1.76,.42)(2,.3)
\psline(1.4,1.2)(1.64,1.08)\rput[b](-1.7,-.45){$.$}\rput[b](-1.7,-.55){$.$}\rput[b](-1.7,-.65){$.$}
\rput[b](1.7,0){$.$}\rput[b](1.7,.1){$.$}\rput[b](1.7,.2){$.$}
\endpspicture $$
\caption{An alternating diagram of $L(q,-1,r)$.} \label{fig2-1}
\end{figure}

\begin{figure}
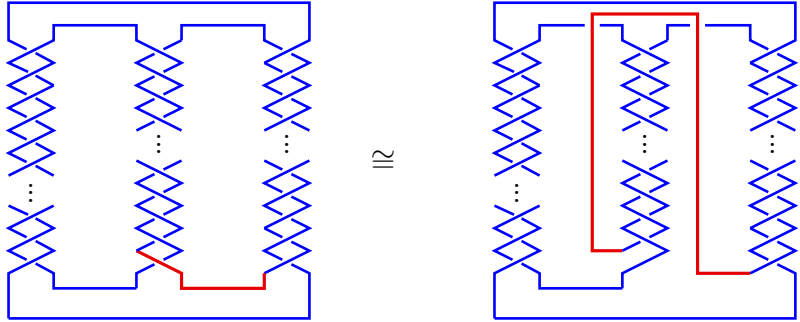
 $$
\pspicture[.45](-2.7,-2.4)(2.2,2.4)
\psline(-2,-2.2)(-2,-1.6)(-1.4,-1.3)(-1.64,-1.17)
\psline(-1.76,-1.12)(-2,-1)(-1.4,-.7) \psline(-1.74,-.82)(-2,-.7)
\psline(-1.76,-1.42)(-2,-1.3)(-1.4,-1)(-1.64,-.87)
\psline(-2,-.3)(-1.4,0)(-1.64,.13) \psline(-1.4,-.3)(-1.64,-.17)
\psline(-1.76,-.12)(-2,0)(-1.4,.3)(-1.64,.43)
\psline(-1.76,.18)(-2,.3)(-1.4,.6)(-1.64,.73)
\psline(-1.76,.48)(-2,.6)(-1.4,.9)
\psline(-1.76,.78)(-2,.9)(-1.4,1.2)(-1.64,1.33)
\psline(-1.76,1.38)(-2,1.5)(-2,2)(2,2)(2,1.5)(1.4,1.2)
\psline(-1.64,-1.47)(-1.4,-1.6)(-1.4,-1.8)(-.3,-1.8)
\psline(.3,1.5)(.3,1.7)(1.4,1.7)(1.4,1.5)(1.64,1.38)
\psline(-1.4,.9)(-1.64,1.03) \psline(1.4,-1)(1.64,-1.12)
\psline(-1.74,1.08)(-2,1.2)(-1.4,1.5)(-1.4,1.7)(-.3,1.7)(-.3,1.5)(.3,1.2)(.06,1.08)
\psline(-.06,1.32)(-.3,1.2)(.3,.9)(.06,.78)
\psline(.3,1.5)(.06,1.38) \psline(-.06,1.02)(-.3,.9)(.3,.6)(.06,.48)
\psline(-.06,.72)(-.3,.6)(.3,.3) \psline(-.06,.42)(-.3,.3)
\psline(1.4,-1.6)(2,-1.3)(1.76,-1.18)
\psline(-2,-2.2)(2,-2.2)(2,-1.6)(1.76,-1.48)
\psline(1.64,-1.42)(1.4,-1.3)(2,-1)(1.76,-.88)
\psline(1.4,-1)(2,-.7)(1.76,-.58)
\psline(1.64,-.82)(1.4,-.7)(2,-.4)(1.76,-.28)
\psline(1.64,-.52)(1.4,-.4)(2,-.1) \psline(1.64,-.22)(1.4,-.1)
\psline(1.76,1.32)(2,1.2)(1.4,.9)(1.64,.78)
\psline(1.76,1.02)(2,.9)(1.4,.6)(1.64,.48)
\psline(1.76,.72)(2,.6)(1.4,.3) \psline(1.76,.42)(2,.3)
\psline(1.4,1.2)(1.64,1.08)
\psline[linewidth=1.2pt,linecolor=darkred](1.4,-1.6)(1.4,-1.8)(.3,-1.8)(.3,-1.6)(-.3,-1.3)
\psline(-.3,-1.3)(-.06,-1.18)
\psline(-.3,-1.8)(-.3,-1.6)(-.06,-1.48)
\psline(.06,-1.42)(.3,-1.3)(-.3,-1)(-.06,-.88)
\psline(.06,-1.12)(.3,-1)(-.3,-.7)(-.06,-.58)
\psline(.06,-.82)(.3,-.7)(-.3,-.4)(-.06,-.28)
\psline(.06,-.52)(.3,-.4)(-.3,-.1) \psline(.06,-.22)(.3,-.1)
\rput[b](-1.7,-.45){$.$}\rput[b](-1.7,-.55){$.$}\rput[b](-1.7,-.65){$.$}
\rput[b](1.7,0){$.$}\rput[b](1.7,.1){$.$}\rput[b](1.7,.2){$.$}
\rput[b](0,0){$.$}\rput[b](0,.1){$.$}\rput[b](0,.2){$.$}
\endpspicture \hskip .5cm \cong \hskip .5cm \pspicture[.45](-2.7,-2.4)(2.2,2.4)
\psline(-2,-2.2)(-2,-1.6)(-1.4,-1.3)(-1.64,-1.17)
\psline(-1.76,-1.12)(-2,-1)(-1.4,-.7) \psline(-1.74,-.82)(-2,-.7)
\psline(-1.76,-1.42)(-2,-1.3)(-1.4,-1)(-1.64,-.87)
\psline(-2,-.3)(-1.4,0)(-1.64,.13) \psline(-1.4,-.3)(-1.64,-.17)
\psline(-1.76,-.12)(-2,0)(-1.4,.3)(-1.64,.43)
\psline(-1.76,.18)(-2,.3)(-1.4,.6)(-1.64,.73)
\psline(-1.76,.48)(-2,.6)(-1.4,.9)
\psline(-1.76,.78)(-2,.9)(-1.4,1.2)(-1.64,1.33)
\psline(-1.76,1.38)(-2,1.5)(-2,2)(2,2)(2,1.5)(1.4,1.2)
\psline(-1.64,-1.47)(-1.4,-1.6)(-1.4,-1.8)(-.3,-1.8)
\psline(.3,1.5)(.3,1.7)(.6,1.7)\psline(.8,1.7)(1.4,1.7)(1.4,1.5)(1.64,1.38)
\psline(-1.4,.9)(-1.64,1.03) \psline(1.4,-1)(1.64,-1.12)
\psline(-1.74,1.08)(-2,1.2)(-1.4,1.5)(-1.4,1.7)(-.8,1.7)
\psline(-.6,1.7)(-.3,1.7)(-.3,1.5)(.3,1.2)(.06,1.08)
\psline(-.06,1.32)(-.3,1.2)(.3,.9)(.06,.78)
\psline(.3,1.5)(.06,1.38) \psline(-.06,1.02)(-.3,.9)(.3,.6)(.06,.48)
\psline(-.06,.72)(-.3,.6)(.3,.3) \psline(-.06,.42)(-.3,.3)
\psline[linewidth=1.2pt,linecolor=darkred](1.4,-1.6)(.7,-1.6)(.7,1.85)(-.7,1.85)(-.7,-1.3)(-.3,-1.3)
\psline(1.4,-1.6)(2,-1.3)(1.76,-1.18)
\psline(-2,-2.2)(2,-2.2)(2,-1.6)(1.76,-1.48)
\psline(1.64,-1.42)(1.4,-1.3)(2,-1)(1.76,-.88)
\psline(1.4,-1)(2,-.7)(1.76,-.58)
\psline(1.64,-.82)(1.4,-.7)(2,-.4)(1.76,-.28)
\psline(1.64,-.52)(1.4,-.4)(2,-.1) \psline(1.64,-.22)(1.4,-.1)
\psline(1.76,1.32)(2,1.2)(1.4,.9)(1.64,.78)
\psline(1.76,1.02)(2,.9)(1.4,.6)(1.64,.48)
\psline(1.76,.72)(2,.6)(1.4,.3) \psline(1.76,.42)(2,.3)
\psline(1.4,1.2)(1.64,1.08) \psline(-.3,-1.3)(-.06,-1.18)
\psline(-.3,-1.8)(-.3,-1.6)(.3,-1.3)(-.3,-1)(-.06,-.88)
\psline(.06,-1.12)(.3,-1)(-.3,-.7)(-.06,-.58)
\psline(.06,-.82)(.3,-.7)(-.3,-.4)(-.06,-.28)
\psline(.06,-.52)(.3,-.4)(-.3,-.1) \psline(.06,-.22)(.3,-.1)
\rput[b](-1.7,-.45){$.$}\rput[b](-1.7,-.55){$.$}\rput[b](-1.7,-.65){$.$}
\rput[b](1.7,0){$.$}\rput[b](1.7,.1){$.$}\rput[b](1.7,.2){$.$}
\rput[b](0,0){$.$}\rput[b](0,.1){$.$}\rput[b](0,.2){$.$}
\endpspicture  $$
\caption{An almost alternating diagram of $L(p,-q,r)$.}
\label{fig2-2}
\end{figure}

\begin{thm}
All nontrivial pretzel knots $K(p, q, r)$ are either torus knots or
hyperbolic knots.
\end{thm}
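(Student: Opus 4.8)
The plan is to combine the alternating/almost--alternating dichotomy of Theorem~\ref{alterthm} with the geometric trichotomy for knots, namely that every nontrivial knot in $\mathbb{S}^3$ is a torus knot, a hyperbolic knot, or a satellite knot, together with the standard fact that every composite knot is a satellite knot. Thus it suffices to show that a nontrivial $K(p,q,r)$ is prime and is either alternating or almost alternating, and then to quote Menasco's theorem (prime alternating knots are torus or hyperbolic) in the alternating case and Adams' theorem (prime almost alternating knots are torus or hyperbolic) in the almost alternating case.

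First I would normalize the signs. Since the standard pretzel diagram is invariant, as an unoriented link, under all permutations of $(p,q,r)$, and since the mirror image of $K(p,q,r)$ is $K(-p,-q,-r)$, which is torus, hyperbolic, or satellite exactly when $K(p,q,r)$ is, I may assume that either all of $p,q,r$ are positive or exactly one of them is negative; in the latter case a permutation brings the knot to the form $K(-p,q,r)$ with $p,q,r>0$. When $p,q,r$ have the same sign the standard diagram is alternating, and when the single negative entry equals $-1$ Theorem~\ref{alterthm} shows that $K(-1,q,r)$ is again alternating; in the remaining cases, where $p\ge 2$, Theorem~\ref{alterthm} provides an almost alternating diagram.

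For the alternating cases I would deduce primeness directly from the diagram. The standard same-sign pretzel diagram, and likewise the alternating diagram produced for $K(-1,q,r)$, is a connected reduced alternating diagram: with every $|p_i|\ge 1$ there are no nugatory crossings, because the three bands keep the two strands of each twist region globally linked. Moreover it is a prime diagram in Menasco's sense, since any simple closed curve separating one twist region from the other two still meets two strands joined through the remaining band, so no connected--sum circle is available. By Menasco's characterization, a prime reduced alternating diagram represents a prime link, so $K(p,q,r)$ is prime, and Menasco's theorem then forces it to be a torus or a hyperbolic knot.

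For the almost alternating case, namely $K(-p,q,r)$ with $p\ge 2$ and the knot genuinely not alternating, I would invoke Hoste's result that no satellite knot is an almost alternating knot. Hence $K(-p,q,r)$ is not a satellite; since every composite knot is a satellite, it is prime, and Adams' theorem, or equivalently the trichotomy together with the exclusion of satellites, shows that it is a torus or a hyperbolic knot. The step I expect to be the main obstacle is the primeness verification in the alternating cases: one must handle carefully the degenerate small values of the parameters, such as entries equal to $\pm 1$ where the reductions of Figure~\ref{fig2-1} can shrink the number of twist regions, so as to be certain that the diagram under analysis is genuinely reduced and prime, and in particular that the hypothesis of nontriviality has excluded exactly those parameter triples whose standard diagram fails to be reduced.
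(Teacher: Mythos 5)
Your overall skeleton --- reduce everything to primeness, then quote Menasco for alternating knots and Adams for almost alternating ones --- is exactly the paper's, but your route to primeness is different, and it has a genuine gap. The paper proves primeness directly, by cases on parity: when $p,q,r$ are all odd the canonical Seifert surface has genus $1$, so additivity of genus under connected sum leaves no room for two nontrivial summands; when exactly one entry is even it exhibits a prime tangle decomposition and applies Lickorish's theorem, disposing of the exceptional parameters $K(2l,\pm 1,\mp 1)$, $K(2l,1,1)$, $K(2l,-1,-1)$ by explicit isotopies to all-odd pretzels. You instead extract primeness from the diagram (Menasco's primality criterion for reduced alternating diagrams) in the alternating cases, and from Hoste's theorem together with ``composite $\Rightarrow$ satellite'' in the almost alternating cases.

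The gap is that your two cases do not exhaust the possibilities, because ``has an almost alternating diagram'' is strictly weaker than ``is an almost alternating knot.'' The definition used in the paper (and in the hypothesis of Hoste's theorem) requires an almost alternating knot to be non-alternating; a knot presented by the diagram of Figure~\ref{fig2-2} may nevertheless admit an alternating diagram elsewhere, in which case Hoste's theorem says nothing about it, and you also hold no alternating diagram to which Menasco's primality criterion could be applied. This is not hypothetical: $K(-2,1,1)=K(-1,-1,-1)$ is a trefoil, an alternating knot that lands in your second case. For the all-odd mixed-sign pretzels you could patch the argument with the genus-$1$ additivity observation (which is precisely what the paper does, and which needs no information about alternation), but for $K(-2l,q,r)$ with $q,r$ odd and $l\ge 1$ the genus is unbounded, so you would either have to prove that none of these knots is alternating --- which is not obvious and is not something you establish --- or supply an independent primeness proof such as the paper's prime-tangle argument. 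As written, the proposal therefore does not yield primeness for all nontrivial $K(p,q,r)$, which is the one ingredient the whole strategy hinges on.
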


\begin{proof} The key ingredient of theorem is that prime
alternating (almost alternating) knots are either hyperbolic or
torus knots~\cite[Corollary 2]{Menasco:alternating} (\cite[Corollary
2.4]{Adams:almost}, respectively). Since every pretzel knot has an
almost alternating diagram by Theorem~\ref{alterthm}, we need to
show that all nontrivial classical pretzel knots are prime. Since no
two of $p, q, r$ are even, there are two cases : $i)$ all of them
are odd, $ii)$ exactly one is even.

\item{$i)$} $p\equiv q\equiv r\equiv 1~(mod~2)$.
For this case, we can use the genus of $K = K(p, q, r)$. Suppose $K
= K_1 \# K_2$. Since a Seifert surface of $K$ is the punctured
torus, it has genus $1$ as described in the left top of
Figure~\ref{fig2-1}. But $1 = g(K) = g(K_1) + g(K_2)$. Thus one of
$g(K_1)$ or $g(K_2)$ has to be $0$, $i. e.,$ one of $K_i$ is
trivial. Therefore $K$ cannot be decomposed as a connected sum of
two nontrivial knots.

\item{$ii)$} Suppose that $p$ is even $i.e., p=2l$,
and $q, r$ are odd. Then it is easy to see that the left two
twisting parts form a prime tangle (except when $|p|= 2l$ and $ |q|
= 1$). The right part is an untangle, but since $r$ is odd, we can
use a result of Lickorish~\cite[Theorem 3]{Lickorish:prime} to
conclude that $K(2l,q,r)$ is prime. For the above  exceptional
cases, we can assume that $|r| =1$ because we can choose $|q| \ge
|r|$. So all possible cases are $K(2l, \pm 1, \mp 1)$, $ K(2l, 1,
1)$ and $K(2l, -1, -1)$. But the first one is the unknot and the
other two can be deformed to $K(p,q,r)$ of all odd crossings, $i.
e$, $K(2l, -1, -1) = K(2l-1,1,1)$ and $K(2l,1,1)=K(2l+1,-1,-1)$.
This completes the proof.
\end{proof}

\subsection{Prime torus pretzel knots}

The primary goal of this section is to decide which classical pretzel knots
are torus knots. For convenience, the $(m,n)$ torus link is denoted
by $T_{(m,n)}$. One can see that all $2$-string torus links are
alternating. C. Adams has conjectured that only $(3,4)$ and $(3,5)$
torus knots are almost alternating~\cite{Adams:almost}. One can see
that $K(-2, 3, 3)$ is the $(3,4)$ torus knot and $K(-2,3,5)$ is the
$(3,5)$ torus knot. Since the branched double cover of a torus link
is a Seifert fibred space with the base surface $\mathbb{S}^2$ and
at most three exceptional fibers, and the branched double cover of a
nontrivial $n$-pretzel link is a Seifert fibred space with $n$
exceptional fibers, there will be no torus knot of the form
$K(p_1,p_2,\cdots,p_n)$ for $n\ge 4$ and $|p_i|\ge 2$.

To find all torus knots, we use the Jones polynomials of $K (2l$,
$q$, $r)$ because the genera of pretzel knots tell us that no
$K(p,q,r)$, with $p, q, r$ all odd, is a torus knot except the
unknot and trefoil, and it is known that $K(p$,$-1$,$1)$ is the
unknot and $K(\pm 1, \pm 1, \pm 1)$ are trefoils, which are the only
torus knots of genus 1. Remark that the genus of an $(m,n)$ torus
knot is $(m-1)(n-1)/2$. The Jones polynomial of an $(m,n)$ torus
link $(m\le n)$ is given by equation~(\ref{modd}) if $m$ is odd, by
equation~(\ref{meven}) if $4 \le m$ is even, and by
equation~(\ref{m2}) if $m=2$ and $n$ is even. This is due to the
original work by Jones~\cite{Jones:hecke} but still there is no
combinatorial proof for these formulae.

\begin{align}
- t^{(m-1)(n-1)/2}&[t^{m+n-2} +t^{m+n-4}+ \cdots +t^{n+1}-t^{m-1}-
\cdots -t^2 -1], \label{modd}
\\
- t^{(m-1)(n-1)/2}&[t^{m+n-2}+ t^{m+n-4}+ \cdots +t^{n}-t^{n-1}-
\cdots -t^2 -1], \label{meven}
\\
- t^{(n-1)/2}&[t^n - t^{n-1} + t^{n-2} - \cdots -t^3 +t^2 +1].
\label{m2}
\end{align}

Using a formula for the Jones polynomials of $n$-pretzel knots in
\cite{Landvoy:Jones}, we find the following lemma. Since the Jones
polynomial of the mirror image $\overline{L}$ of $L$ can be found by
$V_{\overline L}(t)=V_L(t^{-1})$, we may assume $q, r$ are positive
integers.

\begin{lem} Let $l, q, r$ be positive integers. Let $k = 2l+q+r$.

\begin{align*}
V_{K(2, 1, r)} &= t^{(r+1)/2-(2+1)} (t^{r+2+1} - 2t^{r+2} + 2t^{r+1}
- \cdots + 2t^3 -t^2 +t - 1),  \\
V_{K(2l, q, r)} &= t^{(q+r)/2-(2l+1)} (t^k -2t^{k-1} + 3t^{k-2}
-4t^{k-3} +
\cdots -3t^2 +t -1), \hskip .2cm \mathrm{if}~l \ge 1, \\
V_{K(2l, -q, r)} &= - t^{(-4l-3q+r)/2} (t^{q+r} - t^{q+r-1} + \cdots
- t + 1) \hskip 1cm  \mathrm{if}~  q > 1, \\ V_{K(-2, 1, r)} &= -
t^{(r+1)/2} (t^{r+2} - t^{r+1} +t^r - \cdots + t^3 -t^2 - 1),
\\
V_{K(-2, 3, 3)} &= - t^3 (t^5 - t^2 - 1),
\\
V_{K(-2, 3, 5)} &= - t^4 (t^6 - t^2 - 1),
\\
V_{K(-2, 3, r)} &= - t^{(3+r)/2} (t^{3+r-2} - t^r + \cdots -t^2 - 1)
\hskip 2cm  \mathrm{if}~ r \ge 7,
\\
V_{K(-2, q, r)} &= - t^{(q+r)/2} (- t^{q+r-1} +2t^{q+r-2} -\cdots
-t^2 - 1) \hskip .7cm  \mathrm{if}~  q, r \ge 5,
\\
V_{K(-2l, q, r)} &= - t^{(q+r)/2} ( at^{*} + \cdots \pm t \mp 1)
\hskip 3.6cm \mathrm{if}~  l, q, r > 1.
\end{align*} \label{jonespolylem}
\end{lem}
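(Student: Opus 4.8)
The plan is to derive every line of the lemma by specializing the explicit Jones polynomial of a general $n$-pretzel link obtained by Landvoy~\cite{Landvoy:Jones} to the three-strand case in which exactly one twist parameter is even. First I would record the symmetry $V_{\overline{L}}(t)=V_L(t^{-1})$ together with the fact that the mirror of $K(2l,q,r)$ is $K(-2l,-q,-r)$; this lets me normalize $q$ and $r$ to be positive in magnitude and reduces the number of genuinely distinct sign patterns I must treat to those displayed, namely the even entry $\pm 2l$ and the first odd entry $\pm q$. Because $q,r$ are odd, $q+r$ is even, so every half-integer exponent appearing in the statement, such as $(q+r)/2$ and $(r+1)/2$, is in fact an integer; I would check this at the outset so that the monomial prefactors are well defined.

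Next I would substitute $n=3$ and the chosen parities into Landvoy's formula. For a single twist region the Kauffman-bracket contribution satisfies a Chebyshev-type recursion, so the $2l$ half-twists contribute a factor that, after expansion, is a partial geometric series; summing it against the contributions of the $q$- and $r$-regions produces a double sum in powers of $t$. The heart of the computation is collecting this sum into the closed forms shown. I expect the arithmetic-progression coefficients $1,-2,3,-4,\dots$ in $V_{K(2l,q,r)}$ to arise exactly as the partial sums of a geometric series of geometric series, ramping up and then, by the up-and-down symmetry of the coefficient vector, back down to the tail $\cdots-3t^2+t-1$; likewise the strictly alternating patterns in the lines with a negative odd entry should come from the clean telescoping that occurs once one region is negatively twisted.

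I would then isolate the small-parameter cases, where the generic collapse fails because a geometric series degenerates or two exponent ranges overlap: the rows $V_{K(2,1,r)}$ and $V_{K(-2,1,r)}$ with first odd entry equal to $1$, the threshold rows $V_{K(-2,3,3)}$, $V_{K(-2,3,5)}$ and $V_{K(-2,3,r)}$ for $r\ge 7$, and the stable regime $V_{K(-2,q,r)}$ with $q,r\ge 5$; each of these I would verify by treating the short region $q=1$ or $q=3$ explicitly and letting only $r$ run. As an independent check, and the most reliable way to pin down the boundary rows, I would run the Jones skein relation $t^{-1}V_{L_+}-tV_{L_-}=(t^{1/2}-t^{-1/2})V_{L_0}$ on one crossing of the even band: resolving reduces the even parameter by one, while the other smoothing short-circuits that band and turns the diagram into the $2$-strand torus link $T_{(2,q+r)}$, whose Jones polynomial is equation~(\ref{m2}). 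This sets up an induction on $l$ with base cases supplied by the connected sum $L(0,q,r)=T_{(2,q)}\# T_{(2,r)}$, and it should reproduce the same closed forms.

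The main obstacle is bookkeeping rather than conceptual: keeping the index ranges of the two nested sums aligned as the signs of the entries change, and confirming that the overlaps in the small cases produce exactly the stated leading coefficients, for instance the jump from the generic alternating pattern to the truncated $t^{3+r-2}-t^r+\cdots$ in $V_{K(-2,3,r)}$. I would manage this by fixing, once and for all, the normalization of the bracket of a twist region, carrying the half-twist count symbolically, and only then imposing the inequalities $q>1$, $r\ge 7$, and $q,r\ge 5$, so that each displayed row is read off from a single master expression. The final line $V_{K(-2l,q,r)}$ with $l,q,r>1$ records only the shape $at^{*}+\cdots\pm t\mp 1$ precisely because its interior coefficients are not needed for the torus-knot identification carried out later in this section.
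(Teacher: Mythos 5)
Your proposal is correct and follows essentially the same route as the paper: the paper likewise obtains the lemma by specializing Landvoy's formula for the Jones polynomials of $n$-pretzel knots to the three-strand case and invoking $V_{\overline{L}}(t)=V_L(t^{-1})$ to normalize $q,r>0$, giving no further computational detail. Your supplementary skein-relation induction on the even band is a reasonable independent check but is not part of the paper's argument.
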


By comparing Jones polynomials of pretzel knots in
Lemma~\ref{jonespolylem} and Jones polynomials of torus knots in
equation~(\ref{modd}), (\ref{meven}) and (\ref{m2}), we find the
following theorem.

\begin{thm} The following are the only nontrivial pretzel
knots which are torus knots.
\item{$\mathrm{1)}$} $K(p, \pm 1, \mp 1)$ are unknots for all $p$.
\item{$\mathrm{2)}$} $K(\pm 1, \pm 1, \pm 1)$ are $(2, \pm 3)$ torus knots.
\item{$\mathrm{3)}$} $K(\pm 2, \mp 1, \pm r)$ are $(2, \pm r \mp 2)$ torus knots.
\item{$\mathrm{4)}$} $K(\mp 2, \pm 3, \pm 3)$, $K(\mp 2, \pm 3, \pm 5)$ are
$(3,\pm 4)$, $(3,\pm 5)$ torus knots, respectively.
\label{toruspretzel} \end{thm}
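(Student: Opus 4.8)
The plan is to convert the classification into a comparison between the Jones polynomials collected in Lemma~\ref{jonespolylem} and the torus--knot formulas (\ref{modd})--(\ref{m2}), after two normalizations. Since $K(p,q,r)$ is unchanged under any permutation of its three parameters, and since the mirror image satisfies $V_{\overline{K}}(t)=V_K(t^{-1})$ while sending a torus knot to a torus knot, I may assume that the single even parameter (if there is one) is listed first and that $q,r>0$. This splits the problem into the all--odd case and the one--even case $K(\pm 2l,q,r)$.

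In the all--odd case there is essentially nothing new: the canonical surface produced by Seifert's algorithm on the standard diagram is a once--punctured torus, and by Gabai's theorem quoted above it realizes the genus, so every nontrivial all--odd pretzel knot has genus $1$. As $g(T_{(m,n)})=(m-1)(n-1)/2$, the only torus knots of genus at most $1$ are the unknot and the trefoils $T_{(2,\pm3)}$. Identifying the all--odd pretzel knots realizing these, namely $K(p,\pm1,\mp1)$ (unknots) and $K(\pm1,\pm1,\pm1)$ (trefoils), yields statements 1) and 2); the even values of $p$ in 1) are covered by the same cancellation of the $\pm1,\mp1$ pair, independently of parity.

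The core of the argument is the one--even case, which I would settle family by family against Lemma~\ref{jonespolylem}. The decisive structural feature of (\ref{modd})--(\ref{m2}) is that every torus--knot Jones polynomial has all of its nonzero coefficients equal to $\pm1$, occurring only with the prescribed gaps between consecutive nonzero powers. Hence any family whose polynomial exhibits a coefficient of absolute value at least $2$ cannot be a torus knot, and this single test eliminates $V_{K(2,1,r)}$, $V_{K(2l,q,r)}$ with $l\ge1$, $V_{K(-2,q,r)}$ with $q,r\ge5$, and $V_{K(-2l,q,r)}$ with $l,q,r>1$. For the families whose coefficients are all $\pm1$ I would instead compare the exact exponent--and--sign pattern: the sporadic polynomials $-t^3(t^5-t^2-1)$ and $-t^4(t^6-t^2-1)$ match (\ref{modd}) for $(m,n)=(3,4)$ and $(3,5)$, giving statement 4), while the $(2,n)$--type family $V_{K(-2,1,r)}$ together with the boundary value $q=1$ is matched against (\ref{m2}) and identified through the isotopy $L(q,-1,r)\cong L(q-2,1,r-2)$ of Figure~\ref{fig2-1}, producing statement 3).

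The hard part will be precisely the families whose coefficients are all $\pm1$ but which are \emph{not} torus knots, since the coefficient test does not apply and a finer comparison is forced. For instance $V_{K(2l,-q,r)}$ with $q>1$ is fully alternating with every power present, whereas each $(2,n)$ torus polynomial omits the degree--one term, so the two can never coincide; an analogous gap or low--order discrepancy is what I expect to rule out $V_{K(-2,3,r)}$ for $r\ge7$. Care will also be needed at the small--parameter boundaries ($q=1$, $r=1$, or $l=1$), where the generic formulas of Lemma~\ref{jonespolylem} degenerate and one must fall back on the reductions already used for Theorem~\ref{alterthm} and its figures, and on checking that the two sporadic identifications are exact matches rather than mere coincidences of span.
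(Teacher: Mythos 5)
Your overall route is the paper's: dispose of the all--odd case by the genus-$1$ argument, reduce to $K(\pm 2l,q,r)$ by permutation and mirror symmetry, identify the $(2,n)$ family through the isotopy of Figure~\ref{fig2-1}, and eliminate everything else by comparing Lemma~\ref{jonespolylem} with (\ref{modd})--(\ref{m2}). The only real divergence is the discriminating test, and that is where your proposal has two concrete soft spots. First, your ``all nonzero coefficients of a torus-knot Jones polynomial are $\pm1$'' test does not actually dispose of the family $V_{K(-2l,q,r)}$ with $l,q,r>1$: Lemma~\ref{jonespolylem} records this polynomial only as $-t^{(q+r)/2}(at^{*}+\cdots\pm t\mp1)$ and nowhere asserts that $|a|\ge 2$ or that any coefficient has absolute value at least $2$, so the test has nothing to bite on. The information the lemma \emph{does} supply for this family is the nonvanishing coefficient of $t^{1}$, and that is precisely the criterion the paper uses: for every torus knot $T_{(m,n)}$ with $m\ge 3$ the coefficient of $t^{1}$ and the second leading coefficient of the bracketed polynomial in (\ref{modd}), (\ref{meven}) both vanish, which kills this family at once (the $m=2$ torus knots having already been accounted for by the Figure~\ref{fig2-1} reduction).

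Second, you leave $V_{K(-2,3,r)}$ for $r\ge 7$ as something you ``expect'' to rule out by a low-order discrepancy; as written this is not an argument. The same vanishing criterion closes it: the bracketed polynomial $t^{r+1}-t^{r}+\cdots-t^{2}-1$ has second leading coefficient $-1\neq 0$, excluding $m\ge 3$, while a match with a $(2,n)$ torus knot is excluded because the span $r+1$ of the bracket is even, forcing $n$ even and hence a link rather than a knot. With these two repairs your proof coincides in substance with the paper's: after the elimination one is left with $K(-2,3,3)$, $K(-2,3,5)$ and mirrors, whose three-term polynomials match (\ref{modd}) for $(m,n)=(3,4),(3,5)$ exactly as you describe. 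So the gap is not in the strategy but in relying on a coefficient-magnitude test that the data of Lemma~\ref{jonespolylem} does not uniformly support; the vanishing of the $t^{1}$ and second leading coefficients is the test that actually works across all the remaining families.
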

\begin{proof}
We only need to consider $K(2l, q, r)$. We can see
that $K(2$,$ -1$,$ r)$ can be deformed to $K(0,$$ 1,$$ r - 2)$ by a
move shown in Figure~\ref{fig2-1}. The coefficient of $t^1$ and the
second leading coefficient of the Jones polynomial of a torus knot
are zero, but by Lemma~\ref{jonespolylem} these are possible only
for $K(-2, 3, 3)$, $K(-2,3,5)$ and their mirror images. But the
number of terms in the Jones polynomials of these knots is $3$, and
only $(3,n)$ torus knots have this property. By comparing the terms
of the highest degree, we conclude that $K(\mp 2, \pm 3, \pm3)$ and
$K(\mp 2, \pm 3, \pm5)$ are the remaining non-alternating torus
knots.
\end{proof}

\subsection{Minimal genus Seifert surfaces}

When one applies Seifert's algorithm to a diagram of a link $L$, in
general one may not get a minimal genus Seifert surface. In fact,
Moriah found infinitely many knots which have no diagram on which
Seifert's algorithm produces a minimal genus
surface~\cite{Moriah:freegenus}. But it is known that a minimal
genus Seifert surface can be obtained from an alternating diagram by
applying Seifert's algorithm~\cite{Murasugi:genus} and more generally, alternative
links~\cite{Kauffman:comb}. We prove that the Seifert surface
obtained by applying Seifert's algorithm to the diagram in
Figure~\ref{fig4} of a pretzel knot $K(p,q,r)$ is a minimal genus
Seifert surface. Since $K(2l,q,r)$ and its mirror image are
alternating, without loss of a generality, we only need to find
Alexander polynomials of $K(-2l,q,r)$ and $K(-2l,q,-r)$.

\begin{lem} Let $l, q, r$ be positive integers.
\begin{align*}
\Delta _{K(-2l, q, r)}(t) &= t^{-(q+r)/2} ( l t^{q+r} -
(2l-1)t^{q+r-1}
+ \cdots - (2l-1)t + l ), \\
\Delta _{K(-2l, q, - r)}(t)  &= t^{-(q+r-2)/2} (  t^{q+r-2} -
2t^{q+r-3} + \cdots - 2t + 1 ).
\end{align*} \label{alexanderlem}
\end{lem}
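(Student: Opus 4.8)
The plan is to read a Seifert matrix off the canonical surface $\F$ that Seifert's algorithm produces on the diagram of Figure~\ref{fig4}, and then to evaluate $\Delta(t)\doteq\det(t^{1/2}V-t^{-1/2}V^{\mathsf T})$ directly, where $\doteq$ denotes equality up to a unit $\pm t^{k}$. The surface $\F$ is built from Seifert disks joined by one twisted band per crossing, and $H_1(\F)$ has a geometric basis of loops, each running through one adjacent pair of bands; the number of such loops is $q+r$ in the first case and $q+r-2$ in the second. This count already matches the breadth of the asserted polynomials, which is exactly what is needed for the following comparison with Torres' inequality~(\ref{genineqal}); here it serves as a bookkeeping check on the size of $V$.

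I would first record $V$. The $q$- and $r$-twist regions each contribute a chain of basis loops whose Seifert linking numbers are $0$ or $\pm1$, so they give tridiagonal blocks with constant off-diagonals; the even $(-2l)$-region meets the diagram in two oppositely oriented strands, so under Seifert's algorithm it collapses to a single generator carrying the full-twist framing, and this is the only place where $l$ enters $V$, through an entry proportional to $l$. Thus, up to reindexing, $t^{1/2}V-t^{-1/2}V^{\mathsf T}$ is a continuant matrix with one distinguished $l$-dependent diagonal contribution. Passing from $K(-2l,q,r)$ to $K(-2l,q,-r)$ reverses the twists of the $r$-region, flipping the signs of the corresponding diagonal entries; as the determinant will show, this both lowers the breadth by two and makes the $l$-dependence drop out.

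Next I would compute the determinant. Because the matrix is tridiagonal, its determinant obeys the three-term continuant recursion $D_{m}=\beta_m D_{m-1}-\gamma_m D_{m-2}$, with coefficients constant along the $q$- and $r$-chains; solving this constant-coefficient recurrence and applying the boundary corrections from the even band yields a closed form. Since $l$ enters $V$ linearly, the answer is affine in $l$, which is precisely the pattern $l,\,-(2l-1),\ldots$ of the first formula, while the reversed $r$-region produces the $l$-free palindrome $t^{q+r-2}-2t^{q+r-3}+\cdots-2t+1$ of the second. A convenient independent check is the skein relation $\Delta_{L_+}-\Delta_{L_-}=(t^{1/2}-t^{-1/2})\Delta_{L_0}$ applied at a crossing of the even band, which relates $K(-2l,q,\pm r)$ to $K(-2(l-1),q,\pm r)$ and anchors at $l=0$, where the degenerate band splits the diagram into the connected sum $T_{(2,q)}\#T_{(2,r)}$, so that $\Delta_{K(0,q,\pm r)}=\Delta_{T_{(2,q)}}\,\Delta_{T_{(2,r)}}$ is an explicit product of two alternating polynomials.

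The hard part will be purely the bookkeeping: fixing the exact signs and framings in $V$, in particular the single $l$-dependent entry and how the even band couples to the two odd chains, and then evaluating the continuant in closed form so as to recover the interior coefficients suppressed in the ``$\cdots$''. Once the recursion coefficients are pinned down, confirming the two displayed formulas is a finite induction — on $l$ for the affine dependence and on $q+r$ for the shape of the palindrome — together with the symmetry $\Delta(t)\doteq\Delta(t^{-1})$ satisfied by any knot, which fixes the remaining normalization $t^{-(q+r)/2}$, respectively $t^{-(q+r-2)/2}$.
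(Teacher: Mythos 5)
Your primary route --- reading a Seifert matrix off the canonical surface of Figure~\ref{fig4} and evaluating $\det(t^{1/2}V-t^{-1/2}V^{\mathsf T})$ as a continuant --- is genuinely different from the paper's and is viable; the paper instead runs exactly the induction you relegate to an ``independent check.'' Its whole proof consists of the skein recurrences $\Delta_{K(-2l,q,\pm r)}=\Delta_{K(-2(l-1),q,\pm r)}+(t^{-1/2}-t^{1/2})\Delta_{T_{(2,q\pm r)}}$, anchored at the connected sum $T_{(2,q)}\#T_{(2,\pm r)}$ obtained by emptying the even band, together with closed formulas for $\Delta_{T_{(2,p)}}$. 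That route is shorter because the only input per step is a $(2,p)$-torus-link polynomial. Your route has more bookkeeping up front: the even antiparallel band does \emph{not} collapse to a single generator under Seifert's algorithm --- it produces $2l-1$ small Seifert circles and a chain traversed by \emph{two} basis loops, one returning through each odd region --- so a basis change is needed before the matrix is tridiagonal with the $l$-dependence confined to one row; after that the determinant is indeed affine in $l$, as you claim. What your approach buys is that the same computation exhibits the canonical surface whose genus is compared with $\deg\Delta$ in Theorem~\ref{Seifertthm}, so both halves of that later argument come out of one calculation.

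One concrete warning. The assertion that reversing the $r$-region ``makes the $l$-dependence drop out'' cannot hold for the whole polynomial: the determinant of $K(-2l,q,-r)$ equals $|qr+2l(q-r)|$, which varies with $l$ whenever $q\neq r$. In the paper's recurrence the correction term $(t^{-1/2}-t^{1/2})\Delta_{T_{(2,q-r)}}$ has breadth $|q-r|\le q+r-2$ and is centered, so it perturbs only coefficients at distance at least $\min(q,r)-1$ from each end. Hence the second displayed formula is $l$-free only in its degree and its outermost coefficients; the ``$\cdots$'' conceals genuine $l$-dependence, and the same remark applies to the interior of the first formula. Your continuant computation will surface this, and you should not read it as an error --- the lemma is used in Theorem~\ref{Seifertthm} only through the degree and the nonvanishing of the leading coefficient. (This also explains why the extreme coefficients fail for $r=1$, where $|q-r|=q+r-2$ and the correction reaches the ends --- consistent with the hypothesis $1/|p|+1/|q|+1/|r|\le 1$ under which the lemma is applied.)
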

\begin{proof}
One can prove inductively the lemma by the following recurrence
formulae which come from the skein relations, and the formulae for
the Alexander polynomial of the $(2,p)$ torus links.
\begin{align}\notag
\Delta_{T_{(2,\pm p)}} (t) &= t^{(1-p)/2} ( t^{p-1} - t^{p-2} +
\cdots - t + 1) {\rm \pol if \pol} p {\pol \rm \pol is \pol
odd},\\\notag \Delta_{T_{(2,\pm p)}} (t) &= t^{(1-p)/2} ( -t^{p-1} +
t^{p-2} + \cdots - t + 1) {\rm \pol if \pol} p {\pol \rm \pol is
\pol even},\\\notag \Delta _{K(-2, q, \pm r)} (t)  &= \Delta
_{T_{(2,q)}}(t) \Delta_{T_{(2,r)}}(t) +(t^{-1/2}- t^{1/2}) \Delta
_{T_{(2,q \pm r)}}(t) , \\\notag \Delta_{K(-2l, q, \pm r)} (t) &=
\Delta _{K(-2(l-1), q, \pm r)}(t) +(t^{-1/2}- t^{1/2}) \Delta
_{T_{(2,q \pm r)}}(t) .
\end{align}
\end{proof}

\begin{figure}
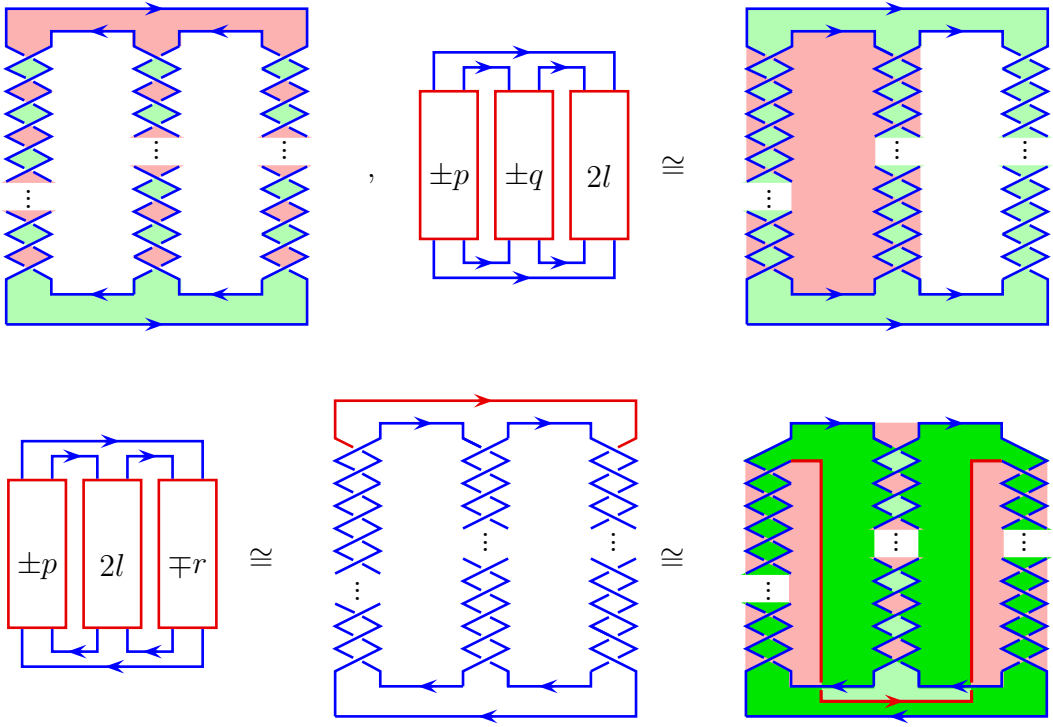
$$ \pspicture[.45](-2.3,-2.4)(2.6,2.4)
\pspolygon[linecolor=lightgreen,fillstyle=solid,fillcolor=lightgreen](-1.99,-2.19)(-1.99,-1.61)(-1.7,-1.46)(-1.41,-1.61)(-1.41,-1.81)
(-.31,-1.81)(-.31,-1.61)(0,-1.46)(.31,-1.61)(.31,-1.81)(1.41,-1.81)(1.41,-1.61)(1.7,-1.46)(1.99,-1.61)(1.99,-2.19)
\pspolygon[linecolor=lightred,fillstyle=solid,fillcolor=lightred](-1.99,1.99)(-1.99,1.51)(-1.7,1.36)(-1.41,1.51)(-1.41,1.71)
(-.31,1.71)(-.31,1.51)(0,1.36)(.31,1.51)(.31,1.71)(1.41,1.71)(1.41,1.51)(1.7,1.36)(1.99,1.51)(1.99,1.99)
\pspolygon[linecolor=lightred,fillstyle=solid,fillcolor=lightred](-1.99,-1.3)(-1.7,-1.16)(-1.41,-1.3)(-1.7,-1.44)
\pspolygon[linecolor=lightred,fillstyle=solid,fillcolor=lightred](-.29,-1.3)(0,-1.16)(.29,-1.3)(0,-1.44)
\pspolygon[linecolor=lightred,fillstyle=solid,fillcolor=lightred](1.99,-1.3)(1.7,-1.16)(1.41,-1.3)(1.7,-1.44)
\pspolygon[linecolor=lightgreen,fillstyle=solid,fillcolor=lightgreen](-1.99,-1)(-1.7,-.86)(-1.41,-1)(-1.7,-1.14)
\pspolygon[linecolor=lightgreen,fillstyle=solid,fillcolor=lightgreen](-.29,-1)(0,-.86)(.29,-1)(0,-1.14)
\pspolygon[linecolor=lightgreen,fillstyle=solid,fillcolor=lightgreen](1.99,-1)(1.7,-.86)(1.41,-1)(1.7,-1.14)
\pspolygon[linecolor=lightred,fillstyle=solid,fillcolor=lightred](-1.99,-.7)(-1.41,-.7)(-1.7,-.84)
\pspolygon[linecolor=lightred,fillstyle=solid,fillcolor=lightred](-.29,-.7)(0,-.56)(.29,-.7)(0,-.84)
\pspolygon[linecolor=lightred,fillstyle=solid,fillcolor=lightred](1.99,-.7)(1.7,-.56)(1.41,-.7)(1.7,-.84)
\pspolygon[linecolor=lightgreen,fillstyle=solid,fillcolor=lightgreen](-.29,-.4)(0,-.26)(.29,-.4)(0,-.54)
\pspolygon[linecolor=lightgreen,fillstyle=solid,fillcolor=lightgreen](1.99,-.4)(1.7,-.26)(1.41,-.4)(1.7,-.54)
\pspolygon[linecolor=lightred,fillstyle=solid,fillcolor=lightred](-.29,-.1)(.29,-.1)(0,-.24)
\pspolygon[linecolor=lightred,fillstyle=solid,fillcolor=lightred](1.99,-.1)(1.41,-.1)(1.7,-.24)
\pspolygon[linecolor=lightgreen,fillstyle=solid,fillcolor=lightgreen](-1.99,1.2)(-1.7,1.06)(-1.41,1.2)(-1.7,1.34)
\pspolygon[linecolor=lightgreen,fillstyle=solid,fillcolor=lightgreen](-.29,1.2)(0,1.06)(.29,1.2)(0,1.34)
\pspolygon[linecolor=lightgreen,fillstyle=solid,fillcolor=lightgreen](1.99,1.2)(1.7,1.06)(1.41,1.2)(1.7,1.34)
\pspolygon[linecolor=lightred,fillstyle=solid,fillcolor=lightred](-1.99,.9)(-1.7,.76)(-1.41,.9)(-1.7,1.04)
\pspolygon[linecolor=lightred,fillstyle=solid,fillcolor=lightred](-.29,.9)(0,.76)(.29,.9)(0,1.04)
\pspolygon[linecolor=lightred,fillstyle=solid,fillcolor=lightred](1.99,.9)(1.7,.76)(1.41,.9)(1.7,1.04)
\pspolygon[linecolor=lightgreen,fillstyle=solid,fillcolor=lightgreen](-1.99,.6)(-1.7,.46)(-1.41,.6)(-1.7,.74)
\pspolygon[linecolor=lightgreen,fillstyle=solid,fillcolor=lightgreen](-.29,.6)(0,.46)(.29,.6)(0,.74)
\pspolygon[linecolor=lightgreen,fillstyle=solid,fillcolor=lightgreen](1.99,.6)(1.7,.46)(1.41,.6)(1.7,.74)
\pspolygon[linecolor=lightred,fillstyle=solid,fillcolor=lightred](-1.99,.3)(-1.7,.16)(-1.41,.3)(-1.7,.44)
\pspolygon[linecolor=lightred,fillstyle=solid,fillcolor=lightred](-.29,.3)(.29,.3)(0,.44)
\pspolygon[linecolor=lightred,fillstyle=solid,fillcolor=lightred](1.99,.3)(1.41,.3)(1.7,.44)
\pspolygon[linecolor=lightgreen,fillstyle=solid,fillcolor=lightgreen](-1.99,0)(-1.7,-.14)(-1.41,0)(-1.7,.14)
\pspolygon[linecolor=lightred,fillstyle=solid,fillcolor=lightred](-1.99,-.3)(-1.41,-.3)(-1.7,-.16)
\psline(-2,-2.2)(-2,-1.6)(-1.4,-1.3)(-1.64,-1.17)
\psline(-1.76,-1.12)(-2,-1)(-1.4,-.7) \psline(-1.74,-.82)(-2,-.7)
\psline(-1.76,-1.42)(-2,-1.3)(-1.4,-1)(-1.64,-.87)
\psline(-2,-.3)(-1.4,0)(-1.64,.13) \psline(-1.4,-.3)(-1.64,-.17)
\psline(-1.76,-.12)(-2,0)(-1.4,.3)(-1.64,.43)
\psline(-1.76,.18)(-2,.3)(-1.4,.6)(-1.64,.73)
\psline(-1.76,.48)(-2,.6)(-1.4,.9)
\psline(-1.76,.78)(-2,.9)(-1.4,1.2)(-1.64,1.33)
\psline(-1.76,1.38)(-2,1.5)(-2,2)(2,2)(2,1.5)(1.4,1.2)
\psline(-1.64,-1.47)(-1.4,-1.6)(-1.4,-1.8)(-.3,-1.8)
\psline(.3,1.5)(.3,1.7)(1.4,1.7)(1.4,1.5)(1.64,1.38)
\psline(-1.4,.9)(-1.64,1.03) \psline(1.4,-1)(1.64,-1.12)
\psline(-1.74,1.08)(-2,1.2)(-1.4,1.5)(-1.4,1.7)(-.3,1.7)(-.3,1.5)(.3,1.2)(.06,1.08)
\psline(-.06,1.32)(-.3,1.2)(.3,.9)(.06,.78)
\psline(.3,1.5)(.06,1.38) \psline(-.06,1.02)(-.3,.9)(.3,.6)(.06,.48)
\psline(-.06,.72)(-.3,.6)(.3,.3) \psline(-.06,.42)(-.3,.3)
\psline(1.4,-1.6)(2,-1.3)(1.76,-1.18)
\psline(-2,-2.2)(2,-2.2)(2,-1.6)(1.76,-1.48)
\psline(1.64,-1.42)(1.4,-1.3)(2,-1)(1.76,-.88)
\psline(1.4,-1)(2,-.7)(1.76,-.58)
\psline(1.64,-.82)(1.4,-.7)(2,-.4)(1.76,-.28)
\psline(1.64,-.52)(1.4,-.4)(2,-.1) \psline(1.64,-.22)(1.4,-.1)
\psline(1.76,1.32)(2,1.2)(1.4,.9)(1.64,.78)
\psline(1.76,1.02)(2,.9)(1.4,.6)(1.64,.48)
\psline(1.76,.72)(2,.6)(1.4,.3) \psline(1.76,.42)(2,.3)
\psline(1.4,1.2)(1.64,1.08)
\psline(1.4,-1.6)(1.4,-1.8)(.3,-1.8)(.3,-1.6)(-.3,-1.3)\psline(-.3,-1.3)(-.06,-1.18)
\psline(-.3,-1.8)(-.3,-1.6)(-.06,-1.48)
\psline(.06,-1.42)(.3,-1.3)(-.3,-1)(-.06,-.88)
\psline(.06,-1.12)(.3,-1)(-.3,-.7)(-.06,-.58)
\psline(.06,-.82)(.3,-.7)(-.3,-.4)(-.06,-.28)
\psline(.06,-.52)(.3,-.4)(-.3,-.1) \psline(.06,-.22)(.3,-.1)
\psline[arrowscale=1.5]{->}(-.1,2)(.1,2)
\psline[arrowscale=1.5]{->}(-.1,-2.2)(.1,-2.2)
\psline[arrowscale=1.5]{->}(-.7,-1.8)(-.9,-1.8)
\psline[arrowscale=1.5]{->}(.9,-1.8)(.7,-1.8)
\psline[arrowscale=1.5]{->}(-.7,1.7)(-.9,1.7)
\psline[arrowscale=1.5]{->}(.9,1.7)(.7,1.7)
\rput[b](-1.7,-.45){$.$}\rput[b](-1.7,-.55){$.$}\rput[b](-1.7,-.65){$.$}
\rput[b](1.7,0){$.$}\rput[b](1.7,.1){$.$}\rput[b](1.7,.2){$.$}
\rput[b](0,0){$.$}\rput[b](0,.1){$.$}\rput[b](0,.2){$.$}
\endpspicture \hskip .2cm, \hskip .2cm \pspicture[.45](-1.7,-1.6)(1.7,1.6)
\psframe[linecolor=darkred](-1.4,-1)(-.6,1) \rput[t](-1,0){$\pm p$}
\psframe[linecolor=darkred](-.4,-1)(.4,1) \rput[t](0,0){$\pm q$}
\psframe[linecolor=darkred](.6,-1)(1.4,1) \rput[t](1,0){$2l$}
\psline(-1.2,1)(-1.2,1.5)(1.2,1.5)(1.2,1)
\psline(-.8,1)(-.8,1.3)(-.2,1.3)(-.2,1)
\psline(.8,1)(.8,1.3)(.2,1.3)(.2,1)
\psline(-1.2,-1)(-1.2,-1.5)(1.2,-1.5)(1.2,-1)
\psline(-.8,-1)(-.8,-1.3)(-.2,-1.3)(-.2,-1)
\psline(.8,-1)(.8,-1.3)(.2,-1.3)(.2,-1)
\psline[arrowscale=1.5]{->}(-.1,1.5)(.1,1.5)
\psline[arrowscale=1.5]{->}(-.1,-1.5)(.1,-1.5)
\psline[arrowscale=1.5]{->}(.4,1.3)(.6,1.3)
\psline[arrowscale=1.5]{->}(.4,-1.3)(.6,-1.3)
\psline[arrowscale=1.5]{->}(-.6,1.3)(-.4,1.3)
\psline[arrowscale=1.5]{->}(-.6,-1.3)(-.4,-1.3)
\endpspicture \cong \pspicture[.45](-2.7,-2.4)(2.2,2.4)
\pspolygon[linecolor=lightgreen,fillstyle=solid,fillcolor=lightgreen](-1.99,-2.19)(-1.99,-1.61)(-1.7,-1.46)(-1.41,-1.61)(-1.41,-1.81)
(-.31,-1.81)(-.31,-1.61)(0,-1.46)(.31,-1.61)(.31,-1.81)(1.41,-1.81)(1.41,-1.61)(1.7,-1.46)(1.99,-1.61)(1.99,-2.19)
\pspolygon[linecolor=lightgreen,fillstyle=solid,fillcolor=lightgreen](-1.99,1.99)(-1.99,1.51)(-1.7,1.36)(-1.41,1.51)(-1.41,1.71)
(-.31,1.71)(-.31,1.51)(0,1.36)(.31,1.51)(.31,1.71)(1.41,1.71)(1.41,1.51)(1.7,1.36)(1.99,1.51)(1.99,1.99)
\pspolygon[linecolor=lightgreen,fillstyle=solid,fillcolor=lightgreen](-1.99,-1.3)(-1.7,-1.16)(-1.41,-1.3)(-1.7,-1.44)
\pspolygon[linecolor=lightgreen,fillstyle=solid,fillcolor=lightgreen](-.29,-1.3)(0,-1.16)(.29,-1.3)(0,-1.44)
\pspolygon[linecolor=lightgreen,fillstyle=solid,fillcolor=lightgreen](1.99,-1.3)(1.7,-1.16)(1.41,-1.3)(1.7,-1.44)
\pspolygon[linecolor=lightgreen,fillstyle=solid,fillcolor=lightgreen](-1.99,-1)(-1.7,-.86)(-1.41,-1)(-1.7,-1.14)
\pspolygon[linecolor=lightgreen,fillstyle=solid,fillcolor=lightgreen](-.29,-1)(0,-.86)(.29,-1)(0,-1.14)
\pspolygon[linecolor=lightgreen,fillstyle=solid,fillcolor=lightgreen](1.99,-1)(1.7,-.86)(1.41,-1)(1.7,-1.14)
\pspolygon[linecolor=lightgreen,fillstyle=solid,fillcolor=lightgreen](-1.99,-.7)(-1.41,-.7)(-1.7,-.84)
\pspolygon[linecolor=lightgreen,fillstyle=solid,fillcolor=lightgreen](-.29,-.7)(0,-.56)(.29,-.7)(0,-.84)
\pspolygon[linecolor=lightgreen,fillstyle=solid,fillcolor=lightgreen](1.99,-.7)(1.7,-.56)(1.41,-.7)(1.7,-.84)
\pspolygon[linecolor=lightgreen,fillstyle=solid,fillcolor=lightgreen](-.29,-.4)(0,-.26)(.29,-.4)(0,-.54)
\pspolygon[linecolor=lightgreen,fillstyle=solid,fillcolor=lightgreen](1.99,-.4)(1.7,-.26)(1.41,-.4)(1.7,-.54)
\pspolygon[linecolor=lightgreen,fillstyle=solid,fillcolor=lightgreen](-.29,-.1)(.29,-.1)(0,-.24)
\pspolygon[linecolor=lightgreen,fillstyle=solid,fillcolor=lightgreen](1.99,-.1)(1.41,-.1)(1.7,-.24)
\pspolygon[linecolor=lightgreen,fillstyle=solid,fillcolor=lightgreen](-1.99,1.2)(-1.7,1.06)(-1.41,1.2)(-1.7,1.34)
\pspolygon[linecolor=lightgreen,fillstyle=solid,fillcolor=lightgreen](-.29,1.2)(0,1.06)(.29,1.2)(0,1.34)
\pspolygon[linecolor=lightgreen,fillstyle=solid,fillcolor=lightgreen](1.99,1.2)(1.7,1.06)(1.41,1.2)(1.7,1.34)
\pspolygon[linecolor=lightgreen,fillstyle=solid,fillcolor=lightgreen](-1.99,.9)(-1.7,.76)(-1.41,.9)(-1.7,1.04)
\pspolygon[linecolor=lightgreen,fillstyle=solid,fillcolor=lightgreen](-.29,.9)(0,.76)(.29,.9)(0,1.04)
\pspolygon[linecolor=lightgreen,fillstyle=solid,fillcolor=lightgreen](1.99,.9)(1.7,.76)(1.41,.9)(1.7,1.04)
\pspolygon[linecolor=lightgreen,fillstyle=solid,fillcolor=lightgreen](-1.99,.6)(-1.7,.46)(-1.41,.6)(-1.7,.74)
\pspolygon[linecolor=lightgreen,fillstyle=solid,fillcolor=lightgreen](-.29,.6)(0,.46)(.29,.6)(0,.74)
\pspolygon[linecolor=lightgreen,fillstyle=solid,fillcolor=lightgreen](1.99,.6)(1.7,.46)(1.41,.6)(1.7,.74)
\pspolygon[linecolor=lightgreen,fillstyle=solid,fillcolor=lightgreen](-1.99,.3)(-1.7,.16)(-1.41,.3)(-1.7,.44)
\pspolygon[linecolor=lightgreen,fillstyle=solid,fillcolor=lightgreen](-.29,.3)(.29,.3)(0,.44)
\pspolygon[linecolor=lightgreen,fillstyle=solid,fillcolor=lightgreen](1.99,.3)(1.41,.3)(1.7,.44)
\pspolygon[linecolor=lightgreen,fillstyle=solid,fillcolor=lightgreen](-1.99,0)(-1.7,-.14)(-1.41,0)(-1.7,.14)
\pspolygon[linecolor=lightgreen,fillstyle=solid,fillcolor=lightgreen](-1.99,-.3)(-1.41,-.3)(-1.7,-.16)
\pspolygon[linecolor=lightred,fillstyle=solid,fillcolor=lightred](-1.39,1.69)(-.31,1.69)(-.31,-1.79)(-1.39,-1.79)
\pspolygon[linecolor=lightred,fillstyle=solid,fillcolor=lightred](-1.99,-1.59)(-1.71,-1.45)(-1.99,-1.31)
\pspolygon[linecolor=lightred,fillstyle=solid,fillcolor=lightred](-1.99,-1.29)(-1.71,-1.15)(-1.99,-1.01)
\pspolygon[linecolor=lightred,fillstyle=solid,fillcolor=lightred](-1.99,-.99)(-1.71,-.85)(-1.99,-.71)
\pspolygon[linecolor=lightred,fillstyle=solid,fillcolor=lightred](-1.99,1.49)(-1.71,1.35)(-1.99,1.21)
\pspolygon[linecolor=lightred,fillstyle=solid,fillcolor=lightred](-1.99,1.19)(-1.71,1.05)(-1.99,.91)
\pspolygon[linecolor=lightred,fillstyle=solid,fillcolor=lightred](-1.99,.89)(-1.71,.75)(-1.99,.61)
\pspolygon[linecolor=lightred,fillstyle=solid,fillcolor=lightred](-1.99,.59)(-1.71,.45)(-1.99,.31)
\pspolygon[linecolor=lightred,fillstyle=solid,fillcolor=lightred](-1.99,.29)(-1.71,.15)(-1.99,.01)
\pspolygon[linecolor=lightred,fillstyle=solid,fillcolor=lightred](-1.99,-.01)(-1.71,-.15)(-1.99,-.29)
\pspolygon[linecolor=lightred,fillstyle=solid,fillcolor=lightred](-1.41,-1.59)(-1.69,-1.45)(-1.41,-1.31)
\pspolygon[linecolor=lightred,fillstyle=solid,fillcolor=lightred](-1.41,-1.29)(-1.69,-1.15)(-1.41,-1.01)
\pspolygon[linecolor=lightred,fillstyle=solid,fillcolor=lightred](-1.41,-.99)(-1.69,-.85)(-1.41,-.71)
\pspolygon[linecolor=lightred,fillstyle=solid,fillcolor=lightred](-1.41,1.49)(-1.69,1.35)(-1.41,1.21)
\pspolygon[linecolor=lightred,fillstyle=solid,fillcolor=lightred](-1.41,1.19)(-1.69,1.05)(-1.41,.91)
\pspolygon[linecolor=lightred,fillstyle=solid,fillcolor=lightred](-1.41,.89)(-1.69,.75)(-1.41,.61)
\pspolygon[linecolor=lightred,fillstyle=solid,fillcolor=lightred](-1.41,.59)(-1.69,.45)(-1.41,.31)
\pspolygon[linecolor=lightred,fillstyle=solid,fillcolor=lightred](-1.41,.29)(-1.69,.15)(-1.41,.01)
\pspolygon[linecolor=lightred,fillstyle=solid,fillcolor=lightred](-1.41,-.01)(-1.69,-.15)(-1.41,-.29)
\pspolygon[linecolor=lightred,fillstyle=solid,fillcolor=lightred](-.29,1.49)(-.01,1.35)(-.29,1.21)
\pspolygon[linecolor=lightred,fillstyle=solid,fillcolor=lightred](-.29,1.19)(-.01,1.05)(-.29,.91)
\pspolygon[linecolor=lightred,fillstyle=solid,fillcolor=lightred](-.29,.89)(-.01,.75)(-.29,.61)
\pspolygon[linecolor=lightred,fillstyle=solid,fillcolor=lightred](-.29,.59)(-.01,.45)(-.29,.31)
\pspolygon[linecolor=lightred,fillstyle=solid,fillcolor=lightred](.29,1.49)(.01,1.35)(.29,1.21)
\pspolygon[linecolor=lightred,fillstyle=solid,fillcolor=lightred](.29,1.19)(.01,1.05)(.29,.91)
\pspolygon[linecolor=lightred,fillstyle=solid,fillcolor=lightred](.29,.89)(.01,.75)(.29,.61)
\pspolygon[linecolor=lightred,fillstyle=solid,fillcolor=lightred](.29,.59)(.01,.45)(.29,.31)
\pspolygon[linecolor=lightred,fillstyle=solid,fillcolor=lightred](-.29,-1.59)(-.01,-1.45)(-.29,-1.31)
\pspolygon[linecolor=lightred,fillstyle=solid,fillcolor=lightred](-.29,-1.29)(-.01,-1.15)(-.29,-1.01)
\pspolygon[linecolor=lightred,fillstyle=solid,fillcolor=lightred](-.29,-.99)(-.01,-.85)(-.29,-.71)
\pspolygon[linecolor=lightred,fillstyle=solid,fillcolor=lightred](-.29,-.69)(-.01,-.55)(-.29,-.41)
\pspolygon[linecolor=lightred,fillstyle=solid,fillcolor=lightred](-.29,-.39)(-.01,-.25)(-.29,-.11)
\pspolygon[linecolor=lightred,fillstyle=solid,fillcolor=lightred](.29,-1.59)(.01,-1.45)(.29,-1.31)
\pspolygon[linecolor=lightred,fillstyle=solid,fillcolor=lightred](.29,-1.29)(.01,-1.15)(.29,-1.01)
\pspolygon[linecolor=lightred,fillstyle=solid,fillcolor=lightred](.29,-.99)(.01,-.85)(.29,-.71)
\pspolygon[linecolor=lightred,fillstyle=solid,fillcolor=lightred](.29,-.69)(.01,-.55)(.29,-.41)
\pspolygon[linecolor=lightred,fillstyle=solid,fillcolor=lightred](.29,-.39)(.01,-.25)(.29,-.11)
\psline(-2,-2.2)(-2,-1.6)(-1.4,-1.3)(-1.64,-1.17)
\psline(-1.76,-1.12)(-2,-1)(-1.4,-.7) \psline(-1.74,-.82)(-2,-.7)
\psline(-1.76,-1.42)(-2,-1.3)(-1.4,-1)(-1.64,-.87)
\psline(-2,-.3)(-1.4,0)(-1.64,.13) \psline(-1.4,-.3)(-1.64,-.17)
\psline(-1.76,-.12)(-2,0)(-1.4,.3)(-1.64,.43)
\psline(-1.76,.18)(-2,.3)(-1.4,.6)(-1.64,.73)
\psline(-1.76,.48)(-2,.6)(-1.4,.9)
\psline(-1.76,.78)(-2,.9)(-1.4,1.2)(-1.64,1.33)
\psline(-1.76,1.38)(-2,1.5)(-2,2)(2,2)(2,1.5)(1.4,1.2)
\psline(-1.64,-1.47)(-1.4,-1.6)(-1.4,-1.8)(-.3,-1.8)
\psline(.3,1.5)(.3,1.7)(1.4,1.7)(1.4,1.5)(1.64,1.38)
\psline(-1.4,.9)(-1.64,1.03) \psline(1.4,-1)(1.64,-1.12)
\psline(-1.74,1.08)(-2,1.2)(-1.4,1.5)(-1.4,1.7)(-.3,1.7)(-.3,1.5)(-.06,1.38)
\psline(.3,1.5)(-.3,1.2)(-.06,1.08)
\psline(.06,1.32)(.3,1.2)(-.3,.9)(-.06,.78)
\psline(.06,1.02)(.3,.9)(-.3,.6)(-.06,.48)
\psline(-.3,1.5)(-.06,1.38) \psline(.06,.72)(.3,.6)(-.3,.3)
\psline(.06,.42)(.3,.3) \psline(1.4,-1.6)(2,-1.3)(1.76,-1.18)
\psline(-2,-2.2)(2,-2.2)(2,-1.6)(1.76,-1.48)
\psline(1.64,-1.42)(1.4,-1.3)(2,-1)(1.76,-.88)
\psline(1.4,-1)(2,-.7)(1.76,-.58)
\psline(1.64,-.82)(1.4,-.7)(2,-.4)(1.76,-.28)
\psline(1.64,-.52)(1.4,-.4)(2,-.1) \psline(1.64,-.22)(1.4,-.1)
\psline(1.76,1.32)(2,1.2)(1.4,.9)(1.64,.78)
\psline(1.76,1.02)(2,.9)(1.4,.6)(1.64,.48)
\psline(1.76,.72)(2,.6)(1.4,.3) \psline(1.76,.42)(2,.3)
\psline(1.4,1.2)(1.64,1.08)
\psline(1.4,-1.6)(1.4,-1.8)(.3,-1.8)(.3,-1.6)
\psline(.3,-1.8)(.3,-1.6)(.06,-1.48)
\psline(-.3,-1.8)(-.3,-1.6)(.3,-1.3)(.06,-1.18)
\psline(-.06,-1.42)(-.3,-1.3)(.3,-1)(.06,-.88)
\psline(-.06,-1.12)(-.3,-1)(.3,-.7)(.06,-.58)
\psline(-.06,-.82)(-.3,-.7)(.3,-.4)(.06,-.28)
\psline(-.06,-.52)(-.3,-.4)(.3,-.1) \psline(-.06,-.22)(-.3,-.1)
\psline[arrowscale=1.5]{->}(-.1,2)(.1,2)
\psline[arrowscale=1.5]{->}(-.1,-2.2)(.1,-2.2)
\psline[arrowscale=1.5]{->}(-.9,-1.8)(-.7,-1.8)
\psline[arrowscale=1.5]{->}(.7,-1.8)(.9,-1.8)
\psline[arrowscale=1.5]{->}(-.9,1.7)(-.7,1.7)
\psline[arrowscale=1.5]{->}(.7,1.7)(.9,1.7)
\rput[b](-1.7,-.45){$.$}\rput[b](-1.7,-.55){$.$}\rput[b](-1.7,-.65){$.$}
\rput[b](1.7,0){$.$}\rput[b](1.7,.1){$.$}\rput[b](1.7,.2){$.$}
\rput[b](0,0){$.$}\rput[b](0,.1){$.$}\rput[b](0,.2){$.$}
\endpspicture$$

$$ \pspicture[.45](-1.7,-2)(1.7,2)
\psframe[linecolor=darkred](-1.4,-1)(-.6,1) \rput[t](-1,0){$\pm p$}
\psframe[linecolor=darkred](-.4,-1)(.4,1) \rput[t](0,0){$2l$}
\psframe[linecolor=darkred](.6,-1)(1.4,1) \rput[t](1,0){$\mp r$}
\psline(-1.2,1)(-1.2,1.5)(1.2,1.5)(1.2,1)
\psline(-.8,1)(-.8,1.3)(-.2,1.3)(-.2,1)
\psline(.8,1)(.8,1.3)(.2,1.3)(.2,1)
\psline(-1.2,-1)(-1.2,-1.5)(1.2,-1.5)(1.2,-1)
\psline(-.8,-1)(-.8,-1.3)(-.2,-1.3)(-.2,-1)
\psline(.8,-1)(.8,-1.3)(.2,-1.3)(.2,-1)
\psline[arrowscale=1.5]{->}(-.1,1.5)(.1,1.5)
\psline[arrowscale=1.5]{->}(.1,-1.5)(-.1,-1.5)
\psline[arrowscale=1.5]{->}(.4,1.3)(.6,1.3)
\psline[arrowscale=1.5]{->}(.6,-1.3)(.4,-1.3)
\psline[arrowscale=1.5]{->}(-.6,1.3)(-.4,1.3)
\psline[arrowscale=1.5]{->}(-.4,-1.3)(-.6,-1.3)
\endpspicture \cong \pspicture[.45](-2.7,-2.4)(2.2,2.4)
\psline(-2,-2.2)(-2,-1.6)(-1.4,-1.3)(-1.64,-1.17)
\psline(-1.76,-1.12)(-2,-1)(-1.4,-.7) \psline(-1.74,-.82)(-2,-.7)
\psline(-1.76,-1.42)(-2,-1.3)(-1.4,-1)(-1.64,-.87)
\psline(-2,-.3)(-1.4,0)(-1.64,.13) \psline(-1.4,-.3)(-1.64,-.17)
\psline(-1.76,-.12)(-2,0)(-1.4,.3)(-1.64,.43)
\psline(-1.76,.18)(-2,.3)(-1.4,.6)(-1.64,.73)
\psline(-1.76,.48)(-2,.6)(-1.4,.9)
\psline(-1.76,.78)(-2,.9)(-1.4,1.2)(-1.64,1.33)
\psline[linecolor=darkred,linewidth=1pt](-1.76,1.38)(-2,1.5)(-2,2)(2,2)(2,1.5)(1.76,1.38)
\psline(-1.64,-1.47)(-1.4,-1.6)(-1.4,-1.8)(-.3,-1.8)
\psline(.3,1.5)(.3,1.7)(1.4,1.7)(1.4,1.5)(2,1.2)(1.76,1.08)
\psline(-1.4,.9)(-1.64,1.03) \psline(1.4,-1)(1.64,-1.12)
\psline(-1.74,1.08)(-2,1.2)(-1.4,1.5)(-1.4,1.7)(-.3,1.7)(-.3,1.5)(-.06,1.38)
\psline(.3,1.5)(-.3,1.2)(-.06,1.08)
\psline(.06,1.32)(.3,1.2)(-.3,.9)(-.06,.78)
\psline(.06,1.02)(.3,.9)(-.3,.6)(-.06,.48)
\psline(-.3,1.5)(-.06,1.38) \psline(.06,.72)(.3,.6)(-.3,.3)
\psline(.06,.42)(.3,.3) \psline(1.4,-1.6)(1.64,-1.48)
\psline(-2,-2.2)(2,-2.2)(2,-1.6)(1.4,-1.3)(1.64,-1.18)
\psline(1.76,-1.42)(2,-1.3)(1.4,-1)(1.64,-.88)
\psline(1.76,-1.12)(2,-1)(1.4,-.7)(1.64,-.58)
\psline(1.76,-.82)(2,-.7)(1.4,-.4)(1.64,-.28)
\psline(1.76,-.52)(2,-.4)(1.4,-.1) \psline(1.76,-.22)(2,-.1)
\psline(1.64,1.32)(1.4,1.2)(2,.9)(1.76,.78)
\psline(1.64,1.02)(1.4,.9)(2,.6)(1.76,.48)
\psline(1.64,.72)(1.4,.6)(2,.3) \psline(1.64,.42)(1.4,.3)
\psline(1.4,-1.6)(1.4,-1.8)(.3,-1.8)(.3,-1.6)
\psline(.3,-1.8)(.3,-1.6)(.06,-1.48)
\psline(-.3,-1.8)(-.3,-1.6)(.3,-1.3)(.06,-1.18)
\psline(-.06,-1.42)(-.3,-1.3)(.3,-1)(.06,-.88)
\psline(-.06,-1.12)(-.3,-1)(.3,-.7)(.06,-.58)
\psline(-.06,-.82)(-.3,-.7)(.3,-.4)(.06,-.28)
\psline(-.06,-.52)(-.3,-.4)(.3,-.1) \psline(-.06,-.22)(-.3,-.1)
\psline[arrowscale=1.5,linecolor=darkred]{->}(-.1,2)(.1,2)
\psline[arrowscale=1.5]{->}(.1,-2.2)(-.1,-2.2)
\psline[arrowscale=1.5]{->}(-.7,-1.8)(-.9,-1.8)
\psline[arrowscale=1.5]{->}(.9,-1.8)(.7,-1.8)
\psline[arrowscale=1.5]{->}(-.9,1.7)(-.7,1.7)
\psline[arrowscale=1.5]{->}(.7,1.7)(.9,1.7)
\rput[b](-1.7,-.45){$.$}\rput[b](-1.7,-.55){$.$}\rput[b](-1.7,-.65){$.$}
\rput[b](1.7,0){$.$}\rput[b](1.7,.1){$.$}\rput[b](1.7,.2){$.$}
\rput[b](0,0){$.$}\rput[b](0,.1){$.$}\rput[b](0,.2){$.$}
\endpspicture \cong \pspicture[.45](-2.7,-2.4)(2.2,2.4)
\pspolygon[linecolor=lightred,fillstyle=solid,fillcolor=lightred](-.29,1.69)(.29,1.69)(.29,1.51)(0,1.38)(-.29,1.51)
\pspolygon[linecolor=lightgreen,fillstyle=solid,fillcolor=lightgreen](-.99,-1.81)
(-.29,-1.81)(-.29,-1.51)(0,-1.38)(.29,-1.51)(.29,-1.81)(.99,-1.81)(.99,-1.99)(-.99,-1.99)
\pspolygon[linecolor=emgreen,fillstyle=solid,fillcolor=emgreen](-1.39,1.69)
(-.31,1.69)(-.31,1.49)(-.31,-1.81)(-.99,-1.81)(-.99,1.21)(-1.39,1.21)(-1.7,1.08)(-1.99,1.21)(-1.39,1.49)
\pspolygon[linecolor=emgreen,fillstyle=solid,fillcolor=emgreen](1.39,1.69)
(.31,1.69)(.31,1.49)(.31,-1.81)(.99,-1.81)(.99,1.21)(1.39,1.21)(1.7,1.08)(1.99,1.21)(1.39,1.49)
\pspolygon[linecolor=emgreen,fillstyle=solid,fillcolor=emgreen](-.29,1.49)(-.01,1.35)(-.29,1.21)
\pspolygon[linecolor=emgreen,fillstyle=solid,fillcolor=emgreen](-.29,1.19)(-.01,1.05)(-.29,.91)
\pspolygon[linecolor=emgreen,fillstyle=solid,fillcolor=emgreen](-.29,.89)(-.01,.75)(-.29,.61)
\pspolygon[linecolor=emgreen,fillstyle=solid,fillcolor=emgreen](-.29,.59)(-.01,.45)(-.29,.31)
\pspolygon[linecolor=emgreen,fillstyle=solid,fillcolor=emgreen](.29,1.49)(.01,1.35)(.29,1.21)
\pspolygon[linecolor=emgreen,fillstyle=solid,fillcolor=emgreen](.29,1.19)(.01,1.05)(.29,.91)
\pspolygon[linecolor=emgreen,fillstyle=solid,fillcolor=emgreen](.29,.89)(.01,.75)(.29,.61)
\pspolygon[linecolor=emgreen,fillstyle=solid,fillcolor=emgreen](.29,.59)(.01,.45)(.29,.31)
\pspolygon[linecolor=emgreen,fillstyle=solid,fillcolor=emgreen](-.29,-1.59)(-.01,-1.45)(-.29,-1.31)
\pspolygon[linecolor=emgreen,fillstyle=solid,fillcolor=emgreen](-.29,-1.29)(-.01,-1.15)(-.29,-1.01)
\pspolygon[linecolor=emgreen,fillstyle=solid,fillcolor=emgreen](-.29,-.99)(-.01,-.85)(-.29,-.71)
\pspolygon[linecolor=emgreen,fillstyle=solid,fillcolor=emgreen](-.29,-.69)(-.01,-.55)(-.29,-.41)
\pspolygon[linecolor=emgreen,fillstyle=solid,fillcolor=emgreen](-.29,-.39)(-.01,-.25)(-.29,-.11)
\pspolygon[linecolor=emgreen,fillstyle=solid,fillcolor=emgreen](.29,-1.59)(.01,-1.45)(.29,-1.31)
\pspolygon[linecolor=emgreen,fillstyle=solid,fillcolor=emgreen](.29,-1.29)(.01,-1.15)(.29,-1.01)
\pspolygon[linecolor=emgreen,fillstyle=solid,fillcolor=emgreen](.29,-.99)(.01,-.85)(.29,-.71)
\pspolygon[linecolor=emgreen,fillstyle=solid,fillcolor=emgreen](.29,-.69)(.01,-.55)(.29,-.41)
\pspolygon[linecolor=emgreen,fillstyle=solid,fillcolor=emgreen](.29,-.39)(.01,-.25)(.29,-.11)
\pspolygon[linecolor=emgreen,fillstyle=solid,fillcolor=emgreen](-1.99,-1.3)(-1.7,-1.16)(-1.41,-1.3)(-1.7,-1.44)
\pspolygon[linecolor=emgreen,fillstyle=solid,fillcolor=emgreen](1.99,-1.3)(1.7,-1.16)(1.41,-1.3)(1.7,-1.44)
\pspolygon[linecolor=emgreen,fillstyle=solid,fillcolor=emgreen](-1.99,-1)(-1.7,-.86)(-1.41,-1)(-1.7,-1.14)
\pspolygon[linecolor=emgreen,fillstyle=solid,fillcolor=emgreen](1.99,-1)(1.7,-.86)(1.41,-1)(1.7,-1.14)
\pspolygon[linecolor=emgreen,fillstyle=solid,fillcolor=emgreen](-1.99,-.7)(-1.41,-.7)(-1.7,-.84)
\pspolygon[linecolor=emgreen,fillstyle=solid,fillcolor=emgreen](1.99,-.7)(1.7,-.56)(1.41,-.7)(1.7,-.84)
\pspolygon[linecolor=emgreen,fillstyle=solid,fillcolor=emgreen](1.99,-.4)(1.7,-.26)(1.41,-.4)(1.7,-.54)
\pspolygon[linecolor=emgreen,fillstyle=solid,fillcolor=emgreen](1.99,-.1)(1.41,-.1)(1.7,-.24)
\pspolygon[linecolor=emgreen,fillstyle=solid,fillcolor=emgreen](-1.99,1.2)(-1.7,1.06)(-1.41,1.2)(-1.7,1.34)
\pspolygon[linecolor=emgreen,fillstyle=solid,fillcolor=emgreen](1.99,1.2)(1.7,1.06)(1.41,1.2)(1.7,1.34)
\pspolygon[linecolor=emgreen,fillstyle=solid,fillcolor=emgreen](-1.99,.9)(-1.7,.76)(-1.41,.9)(-1.7,1.04)
\pspolygon[linecolor=emgreen,fillstyle=solid,fillcolor=emgreen](1.99,.9)(1.7,.76)(1.41,.9)(1.7,1.04)
\pspolygon[linecolor=emgreen,fillstyle=solid,fillcolor=emgreen](-1.99,.6)(-1.7,.46)(-1.41,.6)(-1.7,.74)
\pspolygon[linecolor=emgreen,fillstyle=solid,fillcolor=emgreen](1.99,.6)(1.7,.46)(1.41,.6)(1.7,.74)
\pspolygon[linecolor=emgreen,fillstyle=solid,fillcolor=emgreen](-1.99,.3)(-1.7,.16)(-1.41,.3)(-1.7,.44)
\pspolygon[linecolor=emgreen,fillstyle=solid,fillcolor=emgreen](1.99,.3)(1.41,.3)(1.7,.44)
\pspolygon[linecolor=emgreen,fillstyle=solid,fillcolor=emgreen](-1.99,0)(-1.7,-.14)(-1.41,0)(-1.7,.14)
\pspolygon[linecolor=emgreen,fillstyle=solid,fillcolor=emgreen](-1.99,-.3)(-1.41,-.3)(-1.7,-.16)
\pspolygon[linecolor=emgreen,fillstyle=solid,fillcolor=emgreen](-1.99,-2.19)(-1.99,-1.61)(-1.7,-1.46)(-1.41,-1.61)(-1.41,-1.81)
(-1.01,-1.81)(-1.01,-2.01)(1.01,-2.01)(1.01,-1.81)(1.41,-1.81)(1.41,-1.61)(1.7,-1.46)(1.99,-1.61)(1.99,-2.19)
\pspolygon[linecolor=lightred,fillstyle=solid,fillcolor=lightred](-1.99,-1.59)(-1.71,-1.45)(-1.99,-1.31)
\pspolygon[linecolor=lightred,fillstyle=solid,fillcolor=lightred](-1.99,-1.29)(-1.71,-1.15)(-1.99,-1.01)
\pspolygon[linecolor=lightred,fillstyle=solid,fillcolor=lightred](-1.99,-.99)(-1.71,-.85)(-1.99,-.71)
\pspolygon[linecolor=lightred,fillstyle=solid,fillcolor=lightred](-1.99,1.19)(-1.71,1.05)(-1.99,.91)
\pspolygon[linecolor=lightred,fillstyle=solid,fillcolor=lightred](-1.99,.89)(-1.71,.75)(-1.99,.61)
\pspolygon[linecolor=lightred,fillstyle=solid,fillcolor=lightred](-1.99,.59)(-1.71,.45)(-1.99,.31)
\pspolygon[linecolor=lightred,fillstyle=solid,fillcolor=lightred](-1.99,.29)(-1.71,.15)(-1.99,.01)
\pspolygon[linecolor=lightred,fillstyle=solid,fillcolor=lightred](-1.99,-.01)(-1.71,-.15)(-1.99,-.29)
\pspolygon[linecolor=lightred,fillstyle=solid,fillcolor=lightred](-1.41,-1.59)(-1.69,-1.45)(-1.41,-1.31)
\pspolygon[linecolor=lightred,fillstyle=solid,fillcolor=lightred](-1.41,-1.29)(-1.69,-1.15)(-1.41,-1.01)
\pspolygon[linecolor=lightred,fillstyle=solid,fillcolor=lightred](-1.41,-.99)(-1.69,-.85)(-1.41,-.71)
\pspolygon[linecolor=lightred,fillstyle=solid,fillcolor=lightred](-1.41,1.19)(-1.69,1.05)(-1.41,.91)
\pspolygon[linecolor=lightred,fillstyle=solid,fillcolor=lightred](-1.41,.89)(-1.69,.75)(-1.41,.61)
\pspolygon[linecolor=lightred,fillstyle=solid,fillcolor=lightred](-1.41,.59)(-1.69,.45)(-1.41,.31)
\pspolygon[linecolor=lightred,fillstyle=solid,fillcolor=lightred](-1.41,.29)(-1.69,.15)(-1.41,.01)
\pspolygon[linecolor=lightred,fillstyle=solid,fillcolor=lightred](-1.41,-.01)(-1.69,-.15)(-1.41,-.29)
\pspolygon[linecolor=lightred,fillstyle=solid,fillcolor=lightred](1.99,-1.59)(1.71,-1.45)(1.99,-1.31)
\pspolygon[linecolor=lightred,fillstyle=solid,fillcolor=lightred](1.99,-1.29)(1.71,-1.15)(1.99,-1.01)
\pspolygon[linecolor=lightred,fillstyle=solid,fillcolor=lightred](1.99,-.99)(1.71,-.85)(1.99,-.71)
\pspolygon[linecolor=lightred,fillstyle=solid,fillcolor=lightred](1.99,-.69)(1.71,-.55)(1.99,-.41)
\pspolygon[linecolor=lightred,fillstyle=solid,fillcolor=lightred](1.99,-.39)(1.71,-.25)(1.99,-.11)
\pspolygon[linecolor=lightred,fillstyle=solid,fillcolor=lightred](1.99,1.19)(1.71,1.05)(1.99,.91)
\pspolygon[linecolor=lightred,fillstyle=solid,fillcolor=lightred](1.99,.89)(1.71,.75)(1.99,.61)
\pspolygon[linecolor=lightred,fillstyle=solid,fillcolor=lightred](1.99,.59)(1.71,.45)(1.99,.31)
\pspolygon[linecolor=lightred,fillstyle=solid,fillcolor=lightred](1.41,-1.59)(1.69,-1.45)(1.41,-1.31)
\pspolygon[linecolor=lightred,fillstyle=solid,fillcolor=lightred](1.41,-1.29)(1.69,-1.15)(1.41,-1.01)
\pspolygon[linecolor=lightred,fillstyle=solid,fillcolor=lightred](1.41,-.99)(1.69,-.85)(1.41,-.71)
\pspolygon[linecolor=lightred,fillstyle=solid,fillcolor=lightred](1.41,-.69)(1.69,-.55)(1.41,-.41)
\pspolygon[linecolor=lightred,fillstyle=solid,fillcolor=lightred](1.41,-.39)(1.69,-.25)(1.41,-.11)
\pspolygon[linecolor=lightred,fillstyle=solid,fillcolor=lightred](1.41,1.19)(1.69,1.05)(1.41,.91)
\pspolygon[linecolor=lightred,fillstyle=solid,fillcolor=lightred](1.41,.89)(1.69,.75)(1.41,.61)
\pspolygon[linecolor=lightred,fillstyle=solid,fillcolor=lightred](1.41,.59)(1.69,.45)(1.41,.31)
\pspolygon[linecolor=lightred,fillstyle=solid,fillcolor=lightred](1.41,-1.79)(1.01,-1.79)(1.01,1.19)(1.41,1.19)
\pspolygon[linecolor=lightred,fillstyle=solid,fillcolor=lightred](-1.41,-1.79)(-1.01,-1.79)(-1.01,1.19)(-1.41,1.19)
\pspolygon[linecolor=lightred,fillstyle=solid,fillcolor=lightred](-.29,-1.3)(0,-1.16)(.29,-1.3)(0,-1.44)
\pspolygon[linecolor=lightgreen,fillstyle=solid,fillcolor=lightgreen](-.29,-1)(0,-.86)(.29,-1)(0,-1.14)
\pspolygon[linecolor=lightred,fillstyle=solid,fillcolor=lightred](-.29,-.7)(0,-.56)(.29,-.7)(0,-.84)
\pspolygon[linecolor=lightgreen,fillstyle=solid,fillcolor=lightgreen](-.29,-.4)(0,-.26)(.29,-.4)(0,-.54)
\pspolygon[linecolor=lightred,fillstyle=solid,fillcolor=lightred](-.29,-.1)(.29,-.1)(0,-.24)
\pspolygon[linecolor=lightgreen,fillstyle=solid,fillcolor=lightgreen](-.29,1.2)(0,1.06)(.29,1.2)(0,1.34)
\pspolygon[linecolor=lightred,fillstyle=solid,fillcolor=lightred](-.29,.9)(0,.76)(.29,.9)(0,1.04)
\pspolygon[linecolor=lightgreen,fillstyle=solid,fillcolor=lightgreen](-.29,.6)(0,.46)(.29,.6)(0,.74)
\pspolygon[linecolor=lightred,fillstyle=solid,fillcolor=lightred](-.29,.3)(.29,.3)(0,.44)
\psline(-2,-2.2)(-2,-1.6)(-1.4,-1.3)(-1.64,-1.17)
\psline(-1.76,-1.12)(-2,-1)(-1.4,-.7) \psline(-1.74,-.82)(-2,-.7)
\psline(-1.76,-1.42)(-2,-1.3)(-1.4,-1)(-1.64,-.87)
\psline(-2,-.3)(-1.4,0)(-1.64,.13) \psline(-1.4,-.3)(-1.64,-.17)
\psline(-1.76,-.12)(-2,0)(-1.4,.3)(-1.64,.43)
\psline(-1.76,.18)(-2,.3)(-1.4,.6)(-1.64,.73)
\psline(-1.76,.48)(-2,.6)(-1.4,.9)
\psline(-1.76,.78)(-2,.9)(-1.4,1.2)
\psline(-1.64,-1.47)(-1.4,-1.6)(-1.4,-1.8)(-.3,-1.8)
\psline(.3,1.5)(.3,1.7)(1.4,1.7)(1.4,1.5)(2,1.2)(1.76,1.08)
\psline(-1.4,.9)(-1.64,1.03) \psline(1.4,-1)(1.64,-1.12)
\psline(-1.74,1.08)(-2,1.2)(-1.4,1.5)(-1.4,1.7)(-.3,1.7)(-.3,1.5)(-.06,1.38)
\psline(.3,1.5)(-.3,1.2)(-.06,1.08)
\psline(.06,1.32)(.3,1.2)(-.3,.9)(-.06,.78)
\psline(.06,1.02)(.3,.9)(-.3,.6)(-.06,.48)
\psline(-.3,1.5)(-.06,1.38) \psline(.06,.72)(.3,.6)(-.3,.3)
\psline(.06,.42)(.3,.3) \psline(1.4,-1.6)(1.64,-1.48)
\psline(-2,-2.2)(2,-2.2)(2,-1.6)(1.4,-1.3)(1.64,-1.18)
\psline(1.76,-1.42)(2,-1.3)(1.4,-1)(1.64,-.88)
\psline(1.76,-1.12)(2,-1)(1.4,-.7)(1.64,-.58)
\psline(1.76,-.82)(2,-.7)(1.4,-.4)(1.64,-.28)
\psline(1.76,-.52)(2,-.4)(1.4,-.1) \psline(1.76,-.22)(2,-.1)
\psline(1.4,1.2)(2,.9)(1.76,.78)
\psline(1.64,1.02)(1.4,.9)(2,.6)(1.76,.48)
\psline(1.64,.72)(1.4,.6)(2,.3) \psline(1.64,.42)(1.4,.3)
\psline(1.4,-1.6)(1.4,-1.8)(.3,-1.8)(.3,-1.6)
\psline(.3,-1.8)(.3,-1.6)(.06,-1.48)
\psline(-.3,-1.8)(-.3,-1.6)(.3,-1.3)(.06,-1.18)
\psline(-.06,-1.42)(-.3,-1.3)(.3,-1)(.06,-.88)
\psline(-.06,-1.12)(-.3,-1)(.3,-.7)(.06,-.58)
\psline(-.06,-.82)(-.3,-.7)(.3,-.4)(.06,-.28)
\psline(-.06,-.52)(-.3,-.4)(.3,-.1) \psline(-.06,-.22)(-.3,-.1)
\psline[arrowscale=1.5,linecolor=darkred]{->}(-.1,-2)(.1,-2)
\psline[arrowscale=1.5]{->}(.1,-2.2)(-.1,-2.2)
\psline[arrowscale=1.5]{->}(-.7,-1.8)(-.9,-1.8)
\psline[arrowscale=1.5]{->}(.9,-1.8)(.7,-1.8)
\psline[arrowscale=1.5]{->}(-.9,1.7)(-.7,1.7)
\psline[arrowscale=1.5]{->}(.7,1.7)(.9,1.7)
\psline[linecolor=darkred,linewidth=1.2pt](-1.4,1.2)(-1,1.2)(-1,-1.75)
\psline[linecolor=darkred,linewidth=1.2pt](-1,-1.85)(-1,-2)(1,-2)(1,-1.85)
\psline[linecolor=darkred,linewidth=1.2pt](1,-1.75)(1,1.2)(1.4,1.2)
\rput[b](-1.7,-.45){$.$}\rput[b](-1.7,-.55){$.$}\rput[b](-1.7,-.65){$.$}
\rput[b](1.7,0){$.$}\rput[b](1.7,.1){$.$}\rput[b](1.7,.2){$.$}
\rput[b](0,0){$.$}\rput[b](0,.1){$.$}\rput[b](0,.2){$.$}
\endpspicture
$$
\caption{Minimal genus Seifert surfaces of the pretzel knots
$K(p,q,r)$.} \label{fig4}
\end{figure}

\begin{thm}
The surface obtained by applying Seifert's algorithm to the
pretzel knot $K(p, q, r)$ as in Figure~\ref{fig4} is a minimal genus
Seifert surface, if $1/|p| + 1/|q| + 1/|r| \le 1$.
\label{Seifertthm} \end{thm}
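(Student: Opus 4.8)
The plan is to sandwich the genus $g(K)$ of $K=K(p,q,r)$ between the genus $g(\F)$ of the canonical surface $\F$ of Figure~\ref{fig4} and a lower bound extracted from the Alexander polynomial. Since $\F$ is a Seifert surface for $K$, one always has $g(K)\le g(\F)$, so it suffices to prove the reverse inequality $g(\F)\le g(K)$. First I would dispose of the easy cases. If $p,q,r$ all have the same sign, then the diagram in Figure~\ref{fig4} is alternating, and Murasugi's theorem~\cite{Murasugi:genus} (equivalently, Crowell's equality~\cite{Crowell:genus} in the Torres inequality~(\ref{genineqal})) shows that Seifert's algorithm on a reduced alternating diagram yields a minimal genus surface; if $p,q,r$ are all odd the claim is Gabai's theorem for $L(2k_1+1,2k_2+1,2k_3+1)$~\cite{Gabai:genera}, the canonical surface being the genus one punctured torus. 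Since a three-pretzel knot has either all odd or exactly one even parameter, and since genus and the canonical surface are preserved (up to mirror image) by simultaneously reversing all signs, the remaining work is the non-alternating one-even case. After a mirror reduction and a permutation these are exactly $K(-2l,q,r)$ and $K(-2l,q,-r)$ with $l,q,r>0$ and $q,r$ odd, which are precisely the knots whose Alexander polynomials are recorded in Lemma~\ref{alexanderlem}.

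For these knots I would invoke the Torres inequality~(\ref{genineqal})~\cite{Torres:Alexander}, which for a knot ($\mu=1$) reads $2g(K)\ge \mathrm{degree}\,\Delta_K$, where the degree is the difference between the highest and lowest exponents of the Laurent polynomial $\Delta_K$. By Lemma~\ref{alexanderlem} the leading coefficient of $\Delta_{K(-2l,q,r)}$ is $l\ge 1$ and that of $\Delta_{K(-2l,q,-r)}$ is $1$, both nonzero, so these degrees are exactly $q+r$ and $q+r-2$, giving
\[
g(K(-2l,q,r))\ge \frac{q+r}{2},\qquad g(K(-2l,q,-r))\ge \frac{q+r-2}{2}.
\]
It then remains to check that $\F$ realizes these values. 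I would read the count directly off Figure~\ref{fig4}, whose shaded regions are the Seifert disks: if $\F$ has $s$ Seifert circles and $c=2l+q+r$ crossings, then $g(\F)=(c-s+1)/2$. In the coherently oriented picture the two odd tangles contribute no interior circles while the even tangle contributes $2l-1$ of them, so $s=2l+1$ for $K(-2l,q,r)$ and $s=2l+3$ for $K(-2l,q,-r)$, yielding $g(\F)=(q+r)/2$ and $(q+r-2)/2$ respectively. These match the lower bounds, and the sandwich closes.

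The place where the hypothesis $1/|p|+1/|q|+1/|r|\le 1$ is indispensable, and where I expect the main difficulty, is in guaranteeing that this sandwich is tight, i.e.\ that Torres' inequality is sharp for these diagrams. The condition forces every parameter to satisfy $|p_i|\ge 2$: indeed any $|p_i|=1$ immediately makes $\sum 1/|p_i|>1$, and it is exactly in those cases that the surface of Figure~\ref{fig4} becomes compressible (for instance $K(-2,1,r)$ reduces to a $2$-string torus knot of strictly smaller genus), so the matching genuinely fails there. One checks that the only triples with all $|p_i|\ge 2$ and $\sum 1/|p_i|>1$ are those with absolute values $\{2,3,3\}$ and $\{2,3,5\}$, the exceptional torus knots $K(\mp2,\pm3,\pm3)$ and $K(\mp2,\pm3,\pm5)$ of Theorem~\ref{toruspretzel}, which are therefore also excluded. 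Hence under the hypothesis one has $q,r\ge 3$ in the one-even case, which is exactly what is needed for the Alexander polynomials of Lemma~\ref{alexanderlem} to have the stated degree and for the Seifert-circle count to be valid. The most delicate part of the argument is this orientation-dependent bookkeeping of the Seifert circles across the two sign patterns $K(-2l,q,\pm r)$, together with verifying that the degenerate cases are precisely the ones removed by $\sum 1/|p_i|>1$.
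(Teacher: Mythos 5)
Your proof follows essentially the same route as the paper's: the all-odd case is settled by the genus-one punctured torus, and the one-even cases $K(-2l,q,\pm r)$ are handled by sandwiching the genus between the Torres--Alexander lower bound coming from Lemma~\ref{alexanderlem} and the genus $(q+r)/2$, respectively $(q+r-2)/2$, of the canonical surface of Figure~\ref{fig4}. The only inaccuracy is in your closing commentary: the hypothesis is merely sufficient, and for the excluded all-$|p_i|\ge 2$ triples the sandwich in fact still closes (e.g.\ $K(-2,3,3)=T_{(3,4)}$ has genus $3=(3+3)/2$, matching its canonical surface), so the matching does not ``genuinely fail'' there --- only the $|p_i|=1$ cases are truly problematic.
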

\begin{proof}
We consider two cases : $i)$ all of $p, q, r$ are odd, $ii)$ exactly
one of $p, q, r$ is even. For the first case, the first Seifert
surface in Figure~\ref{fig4} is clearly a minimal genus since its
genus is $1$ unless $K(p, q, r)$ is the unknot. But it can not be
the unknot by the hypothesis. For the second case, we can consider
$K(-2l, q, \pm r)$, $K(-2l, q, \pm r)$ or their mirror images, where $l, q,
r$ are positive. Their canonical Seifert surfaces are given in
Figure~\ref{fig4}. To prove these surfaces are minimal genus Seifert surfaces, first
we find $2 g(K(-2l,q,\pm r)) \ge q + r - 1 \pm 1$ using the
Alexander polynomials of $K(-2l, q, r)$ and $K(-2l, q, -r)$ given in
Lemma~\ref{alexanderlem} and inequality (\ref{genineqal}). But the
genus of the second Seifert surface in Figure~\ref{fig4} is
$(q+r)/2$, and the third surface in Figure~\ref{fig4} is
$(q+r-2)/2$. It completes the proof.
\end{proof}

By combining Theorem~\ref{toruspretzel} and
Theorem~\ref{Seifertthm}, we find the following corollary.

\begin{cor}  The genus of $K(p,q,r)$ is as follows.
\item{$\mathrm{1)}$} $K(p, \pm 1,\mp 1), K(\pm 2, \mp 1, \pm 3)$
have genus 0 for all $p$.
\item{$\mathrm{2)}$} $K(p, q , r)$ has genus 1 if
$p \equiv q \equiv r \equiv 1 ~(mod~2)$ and we are not in case 1).
\item{$\mathrm{3)}$} $K(\pm 2, \mp 1, \pm r)$ has genus $(|r-2|-1)/2$.
\item{$\mathrm{4)}$} $K(\mp 2l, q, r)$ has genus $(|q| + |r|)/2$ if $q, r$
have the same sign and we are not in any of the previous cases.
\item{$\mathrm{5)}$} $K(\mp 2l, q, r)$ has genus $(|q| + |r| -2)/2$ if
$q, r$ have different signs and we are not in cases $1), 2)$ or
$3)$.
\end{cor}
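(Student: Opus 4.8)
The plan is to pin down $g(K(p,q,r))$ by squeezing it between an explicit upper bound coming from the canonical Seifert surfaces drawn in Figure~\ref{fig4}, and a lower bound coming from one of two sources: the torus-knot classification of Theorem~\ref{toruspretzel} for the sporadic families, and the Alexander polynomials of Lemma~\ref{alexanderlem} fed into Torres' inequality~(\ref{genineqal}) for the generic families. Since the genus is a mirror invariant, I first normalize so that the even parameter is negative and $q,r>0$, reducing everything to the two surfaces $K(-2l,q,r)$ and $K(-2l,q,-r)$ already drawn in Figure~\ref{fig4}, together with the once-punctured torus of the all-odd case. I would then treat the five cases in the order the statement lists them, since each later case is defined by excluding the earlier ones.

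Cases $1)$ and $2)$ are settled directly by Theorem~\ref{toruspretzel}. The knots in $1)$ are exactly the trivial ones: $K(p,\pm1,\mp1)$ is the unknot, and $K(\pm2,\mp1,\pm3)=T_{(2,\pm1)}$ is also the unknot, so both have genus $0$. For $2)$, when $p\equiv q\equiv r\equiv1\pmod 2$ the first surface of Figure~\ref{fig4} is a once-punctured torus, giving $g\le1$; and $g=0$ would make $K(p,q,r)$ trivial, which by Theorem~\ref{toruspretzel} forces case $1)$. Hence $g=1$ exactly off case $1)$. Case $3)$ is likewise immediate: Theorem~\ref{toruspretzel}$(3)$ identifies $K(\pm2,\mp1,\pm r)$ with the torus knot $T_{(2,\pm(r-2))}$, whose genus is $(|r-2|-1)/2$.

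The substance is in cases $4)$ and $5)$. Here I would read off the upper bounds $g\le(|q|+|r|)/2$ and $g\le(|q|+|r|-2)/2$ from the genera of the second and third surfaces of Figure~\ref{fig4}, and match them with the lower bounds supplied by Lemma~\ref{alexanderlem}: the same-sign polynomial has breadth $q+r$ (leading and trailing coefficients $l\neq0$) and the opposite-sign polynomial has breadth $q+r-2$ (leading and trailing coefficients $1$), so inequality~(\ref{genineqal}) gives $2g\ge q+r$ and $2g\ge q+r-2$ respectively. Combining the bounds yields the stated equalities. Note that this route does \emph{not} invoke the hypothesis $1/|p|+1/|q|+1/|r|\le1$ of Theorem~\ref{Seifertthm}: the breadth argument applies verbatim even to knots such as $K(-2l,1,r)$ or $K(-2,3,3)$ for which that hypothesis fails, which is essential because some of them (for example $K(-4,1,r)$) are hyperbolic and hence cannot be reached through the torus-knot genus formula.

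The main obstacle is the validity of Lemma~\ref{alexanderlem} at the degenerate parameter values, together with the matching of the case partition to it. For the same-sign polynomial of case $4)$ there is no difficulty: it has full breadth $q+r$ for all $l,q,r\ge1$, and even the reducible instances (such as $K(-2,1,r)=T_{(2,r+2)}$) happen to have genus $(|q|+|r|)/2$ anyway. But the opposite-sign formula \emph{overstates} the breadth precisely for the knots $K(\pm2,\mp1,\pm r)$, which the reduction move of Figure~\ref{fig2-1} collapses to the $(2,\pm(r-2))$ torus knots of genus $(|r-2|-1)/2<(|q|+|r|-2)/2$; there Torres' bound is strict and Lemma~\ref{alexanderlem} cannot be applied. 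The crux of the write-up is therefore to check that these are exactly the opposite-sign knots removed by the phrase ``not in cases $1),2)$ or $3)$'', so that on the remaining non-degenerate parameters the Alexander breadth is genuinely $q+r-2$ and case $5)$ holds, while on the excluded parameters the correct, smaller genus is already recorded in cases $1)$ and $3)$.
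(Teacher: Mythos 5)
Your proposal is correct and rests on the same ingredients as the paper's own derivation: the paper disposes of the corollary in one sentence, ``by combining Theorem~\ref{toruspretzel} and Theorem~\ref{Seifertthm},'' and the proof of Theorem~\ref{Seifertthm} is precisely your argument -- upper bounds from the canonical surfaces of Figure~\ref{fig4}, lower bounds from Lemma~\ref{alexanderlem} fed into inequality~(\ref{genineqal}). The one substantive difference is that you bypass Theorem~\ref{Seifertthm} and run the breadth argument directly, discarding its hypothesis $1/|p|+1/|q|+1/|r|\le 1$. This is not merely cosmetic: knots such as $K(-2l,1,r)$ with $l\ge 2$ fall under case 4) of the corollary, satisfy $1/|p|+1/|q|+1/|r|>1$, and are not torus knots, so the paper's one-line combination does not literally reach them, whereas your direct appeal to the same-sign Alexander polynomial (whose extreme coefficients are $\pm l\ne 0$ for all $q,r\ge 1$) does. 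Likewise, your identification of exactly where the opposite-sign formula of Lemma~\ref{alexanderlem} degenerates -- the family $K(\pm 2,\mp 1,\pm r)$, which collapses under the move of Figure~\ref{fig2-1} and is precisely what the exclusion ``not in cases 1), 2) or 3)'' removes -- is a point the paper leaves implicit. In short: same method, but your version makes explicit, and closes, a small gap in the paper's own justification of cases 4) and 5).
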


For classical pretzel links, one can see that $L(2l_1,2l_2,2l_3)$
has genus $0$. For $L(2l_1, 2l_2$, $r)$, we are going to see more
interesting results for the genus because there is a freedom to
choose orientations of the components. But, Lemma~\ref{alexanderlem}
remains true for arbitrary integers $q, r$, so we can find the
following corollary.
\begin{cor}
The genus of the link $L(2l_1,2l_2,r)$, where $|l_1|\ge |l_2|$ ,
$l_1, l_2 >0$(unless we indicate differently) and $r \ge 0$, is as follows.
\item{$\mathrm{1)}$} $L(2l_1, 2l_2 , \pm r)$ has genus 0
if $r \equiv 0 ~(mod~2)$ and $l_1, l_2$ are nonzero integers.
\item{$\mathrm{2)}$} $L(\pm 2, \pm 2l_2 , \mp 1)$ has genus $(|2l_2-2|-2)/2$.
\item{$\mathrm{3)}$} $K(\mp 2l_1, \mp 2l_2, \mp r)$ has genus $(|l_2| + |r|-1)/2$
if we are not in one of the previous cases.
\item{$\mathrm{4)}$} $K(\mp 2l_1, \mp 2l_2, \pm r)$ has genus $(|l_2| + |r|-3)/2$
if we are not in any of the previous cases.
\item{$\mathrm{5)}$} $K(\mp 2l_1, \pm 2l_2, \mp r)$ has genus $(|l_2| + |r| -3)/2$
if we are  not in case $1)$.
\item{$\mathrm{6)}$} $K(\mp 2l_1, \pm 2l_2, \pm r)$ has genus $(|l_2| + |r| -1)/2$
if we are not in case $1)$ and $|l_1|>|l_2|$, or has genus $(|l_2| +
|r| -3)/2$  if we are  not in case $1)$ and $|l_1|=|l_2|$.
\label{3pretzelgen}
\end{cor}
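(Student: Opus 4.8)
The plan is to sandwich $g(L(2l_1,2l_2,r))$ between the Torres lower bound~(\ref{genineqal}), fed by the Alexander polynomials of Lemma~\ref{alexanderlem}, and the upper bound coming from the genus of the surface that Seifert's algorithm produces on the standard diagram, and then to check that the two bounds agree case by case. The feature that forces a multi-case statement, as flagged before the corollary, is that two of the three twist regions carry an \emph{even} number of half-twists, and the components meeting them may be oriented in more than one way; it is this orientation freedom that must be organized carefully.

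First I would fix the standard diagram of the pretzel with $2l_1$, $2l_2$ and $|r|$ half-twists and orient its components, noting that the sign of a crossing is itself orientation-dependent: a fixed geometric region of $2l_i$ half-twists acquires a definite sign only once the strands through it are oriented, coherent strands giving one sign and anti-coherent strands the opposite. Reversing a component flips the signs of exactly the regions it meets, so the oriented link actually carried by the diagram is one of the signed pretzels $L(\pm 2l_1,\pm 2l_2,\pm r)$ listed in cases~2--6; for $r$ odd the middle region is run twice by a single component, so its sign is rigid, while the two even signs flip together under reversal of the other component. For whichever signed pretzel the genus-minimizing orientation produces, Lemma~\ref{alexanderlem}, extended to arbitrary $q,r$ by the remark preceding the statement, supplies $\Delta$; its span together with the component count $\mu$ (three for $r$ even, two for $r$ odd) then yields the lower bound through~(\ref{genineqal}).

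For the matching upper bound I would run Seifert's algorithm on the same diagram, using that a coherently oriented region of $k$ crossings contributes $k$ bands but no new Seifert circle, whereas an anti-coherently oriented region of $k$ crossings contributes $k-1$ nested Seifert circles. Choosing the orientation that makes the \emph{larger} even region anti-coherent absorbs its crossings into Seifert circles, which is precisely why the smaller parameter $l_2$ (and $r$), not $l_1$, governs the genus. Counting the total crossing number $c$ and Seifert-circle number $s$ and applying $2g=c-s+2-\mu$ to the resulting connected surface gives a canonical surface whose genus equals the Torres bound, proving minimality at once. The degenerate end $r$ even is the easiest: all three parameters are then even, the coherent orientation makes the canonical surface planar, and $g=0$, recovering the already-noted fact that $L(2l_1,2l_2,2l_3)$ has genus $0$ (case~1).

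The step I expect to be the main obstacle is the subcase of~6 with $|l_1|=|l_2|$, where the two even regions enter with opposite sign and equal magnitude. Here there is extra cancellation: in the Alexander polynomial this shows up as the vanishing of the extreme coefficient, which I would verify from the explicit formula degenerates exactly when $l_1=l_2$, dropping the span of $\Delta$ by $2$ and the Torres bound by $1$. I must then confirm that this drop is genuine at the level of surfaces, that is, that after the equal-and-opposite twists partially cancel one really finds a canonical surface of one lower genus, so that the two bounds still coincide; the same hands-on correction is needed at the small-parameter boundary of case~2, where a component can unknot and the generic circle count must be adjusted.
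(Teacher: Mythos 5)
Your overall strategy is the one the paper uses: a lower bound from the Torres inequality~(\ref{genineqal}) applied to the Alexander polynomials of Lemma~\ref{alexanderlem} (which, as noted just before the corollary, remain valid for arbitrary integers $q,r$), an upper bound from the genus of the canonical Seifert surface of the standard diagram as in Theorem~\ref{Seifertthm}, an optimization over the orientation of the component passing through the two even boxes, and special treatment of $r=\pm 1$ via the isotopy of Theorem~\ref{alterthm}. You also correctly isolate the two delicate points, namely the $|l_1|=|l_2|$ degeneration in case 6) and the small-parameter boundary of case 2). The paper's own proof is only a two-line pointer to those earlier results, so your level of detail is comparable.

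There is, however, a genuine error in how you set up the case analysis. You assert that "the oriented link actually carried by the diagram is one of the signed pretzels $L(\pm 2l_1,\pm 2l_2,\pm r)$ listed in cases 2)--6)" and that reversing a component flips the signs of the regions it meets, i.e.\ moves you between cases. This conflates two different pieces of data. The signs in the notation $L(p_1,p_2,p_3)$ record the handedness of the half-twists and are orientation-independent: they determine the link itself, so cases 3)--6) are genuinely different families of links, not different orientations of one link. (If they were orientations of a single link they could not receive different genera, since the paper defines $g(L)$ as a minimum over all orientations.) What reversing the component through the two even boxes actually changes is the coherence data --- which of the two even boxes is traversed by parallel strands and which by antiparallel strands --- that is, the $\epsilon_i$ in the paper's vector notation $(p_1^{\epsilon_1},\ldots,p_n^{\epsilon_n})$, not the $p_i$. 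The correct organization, which the paper later formalizes in Remark~\ref{defpt}, Lemma~\ref{orienlem} and Theorem~\ref{linkgenus}, is: first fix the sign pattern (the link), then minimize over the single remaining binary orientation choice. Making the larger even box antiparallel is what removes $l_1$ from the answer, and when $|l_1|=|l_2|$ with opposite twist signs the remaining freedom lets one force the cancellation $\alpha+\beta=0$ that drops the genus in case 6). As written, your plan would analyze only one link per triple $(|l_1|,|l_2|,|r|)$ and could not produce the distinct values of cases 3) through 6); once the two layers of sign data are separated, the rest of your argument goes through as in the paper.
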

\begin{proof}
We follow the proof of Theorem~\ref{toruspretzel} and
Theorem~\ref{Seifertthm} carefully ; if  $r=\pm 1$, the link will
have two representatives by the move we used in the proof of
Theorem~\ref{alterthm}, we get the result, with a note that we
have a freedom to choose an orientation of the component which goes
through two even crossing boxes.
\end{proof}

\section{Conway polynomials of $n$-pretzel links} \label{poly}

To find the polynomial invariants of $n$-pretzel links, we will use
a computation tree : a \emph{computation tree} of a link polynomial
$P_L$ is an edge weighted, rooted binary tree whose vertices are
links, the root of the tree is $L$, two vertices $L_1, L_2$
are children of a vertex $L_p$ if
$$P_{L_p}=w(L_{p(1)})P_{L_1}+ w(L_{p(2)})P_{L_2},$$
and $w(L_{p(i)})$ is the weight on the edge between $L_p$ and $L_i$.
One can see that the link polynomial $P_L$ can be computed as
follows,
$$P_{L}=\sum_{L_v\in \mathcal{L}}
\prod_{L_p\in \mathcal{P}(L_v)} w(L_{p(i)})P_{L_v},$$ where
$\mathcal{L}$ is the set of all vertices of valence $1$ and
$\mathcal{P}(L_v)$ is the set of all vertices of the path from the
root to the vertex $L_v$. In general, it is easy to find $P_L$ if we
repeatedly use the skein relations until each vertex $L_v$ becomes an unlink. Moreover, one can replace
links by other for a convenience of the computation. For instance,
J. Franks and R. F. Williams used braids to find a beautiful result on Jones polynomial~\cite{FW:jones}.

\begin{figure}
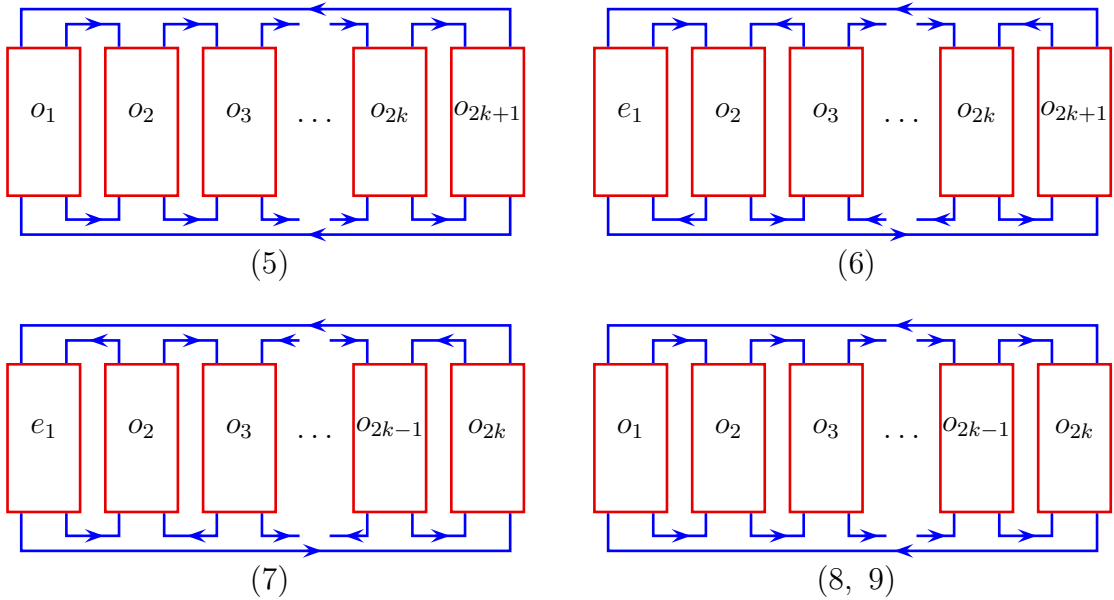

$$
\pspicture[.4](-4.3,-2.2)(3,1.8)
\rput(-.6,-1.9){\rnode{b1}{$(\ref{conknot3})$}}
\psframe[linecolor=darkred](-4.1,-1)(-3.1,1) \rput[b](-3.6,0){$o_1$}
\psframe[linecolor=darkred](-2.8,-1)(-1.8,1) \rput[b](-2.3,0){$o_2$}
\psframe[linecolor=darkred](-1.5,-1)(-.5,1) \rput[b](-1,0){$o_3$}
\rput[b](0,0){$\ldots$} \psframe[linecolor=darkred](.5,-1)(1.5,1)
\rput[b](1,0){$o_{2k}$} \psframe[linecolor=darkred](1.8,-1)(2.8,1)
\rput[b](2.3,0){$o_{2k+1}$}
\psline(-3.9,1)(-3.9,1.5)(2.6,1.5)(2.6,1)
\psline(-3.3,1)(-3.3,1.3)(-2.6,1.3)(-2.6,1)
\psline(-2,1)(-2,1.3)(-1.3,1.3)(-1.3,1)
\psline(-.7,1)(-.7,1.3)(-.2,1.3) \psline(.2,1.3)(.7,1.3)(.7,1)
\psline(1.3,1)(1.3,1.3)(2,1.3)(2,1)
\psline(-3.9,-1)(-3.9,-1.5)(2.6,-1.5)(2.6,-1)
\psline(-3.3,-1)(-3.3,-1.3)(-2.6,-1.3)(-2.6,-1)
\psline(-2,-1)(-2,-1.3)(-1.3,-1.3)(-1.3,-1)
\psline(-.7,-1)(-.7,-1.3)(-.2,-1.3) \psline(.2,-1.3)(.7,-1.3)(.7,-1)
\psline(1.3,-1)(1.3,-1.3)(2,-1.3)(2,-1)
\psline[arrowscale=1.5]{->}(.1,1.5)(-.1,1.5)
\psline[arrowscale=1.5]{->}(-3,1.3)(-2.8,1.3)
\psline[arrowscale=1.5]{->}(-1.7,1.3)(-1.5,1.3)
\psline[arrowscale=1.5]{->}(-.5,1.3)(-.3,1.3)
\psline[arrowscale=1.5]{->}(.4,1.3)(.6,1.3)
\psline[arrowscale=1.5]{->}(1.6,1.3)(1.8,1.3)
\psline[arrowscale=1.5]{->}(.1,-1.5)(-.1,-1.5)
\psline[arrowscale=1.5]{->}(-3,-1.3)(-2.8,-1.3)
\psline[arrowscale=1.5]{->}(-1.7,-1.3)(-1.5,-1.3)
\psline[arrowscale=1.5]{->}(-.5,-1.3)(-.3,-1.3)
\psline[arrowscale=1.5]{->}(.4,-1.3)(.6,-1.3)
\psline[arrowscale=1.5]{->}(1.6,-1.3)(1.8,-1.3)
\endpspicture \hskip .5cm \pspicture[.4](-4.3,-2.2)(3,1.8) \rput(-.6,-1.9){\rnode{b1}{$(\ref{conknot4})$}}
\psframe[linecolor=darkred](-4.1,-1)(-3.1,1) \rput[b](-3.6,0){$e_1$}
\psframe[linecolor=darkred](-2.8,-1)(-1.8,1) \rput[b](-2.3,0){$o_2$}
\psframe[linecolor=darkred](-1.5,-1)(-.5,1) \rput[b](-1,0){$o_3$}
\rput[b](0,0){$\ldots$} \psframe[linecolor=darkred](.5,-1)(1.5,1)
\rput[b](1,0){$o_{2k}$} \psframe[linecolor=darkred](1.8,-1)(2.8,1)
\rput[b](2.3,0){$o_{2k+1}$}
\psline(-3.9,1)(-3.9,1.5)(2.6,1.5)(2.6,1)
\psline(-3.3,1)(-3.3,1.3)(-2.6,1.3)(-2.6,1)
\psline(-2,1)(-2,1.3)(-1.3,1.3)(-1.3,1)
\psline(-.7,1)(-.7,1.3)(-.2,1.3) \psline(.2,1.3)(.7,1.3)(.7,1)
\psline(1.3,1)(1.3,1.3)(2,1.3)(2,1)
\psline(-3.9,-1)(-3.9,-1.5)(2.6,-1.5)(2.6,-1)
\psline(-3.3,-1)(-3.3,-1.3)(-2.6,-1.3)(-2.6,-1)
\psline(-2,-1)(-2,-1.3)(-1.3,-1.3)(-1.3,-1)
\psline(-.7,-1)(-.7,-1.3)(-.2,-1.3) \psline(.2,-1.3)(.7,-1.3)(.7,-1)
\psline(1.3,-1)(1.3,-1.3)(2,-1.3)(2,-1)
\psline[arrowscale=1.5]{->}(.1,1.5)(-.1,1.5)
\psline[arrowscale=1.5]{->}(-3,1.3)(-2.8,1.3)
\psline[arrowscale=1.5]{<-}(-1.7,1.3)(-1.5,1.3)
\psline[arrowscale=1.5]{->}(-.5,1.3)(-.3,1.3)
\psline[arrowscale=1.5]{->}(.4,1.3)(.6,1.3)
\psline[arrowscale=1.5]{<-}(1.6,1.3)(1.8,1.3)
\psline[arrowscale=1.5]{<-}(.1,-1.5)(-.1,-1.5)
\psline[arrowscale=1.5]{<-}(-3,-1.3)(-2.8,-1.3)
\psline[arrowscale=1.5]{->}(-1.7,-1.3)(-1.5,-1.3)
\psline[arrowscale=1.5]{<-}(-.5,-1.3)(-.3,-1.3)
\psline[arrowscale=1.5]{<-}(.4,-1.3)(.6,-1.3)
\psline[arrowscale=1.5]{->}(1.6,-1.3)(1.8,-1.3)
\endpspicture
$$
$$
\pspicture[.4](-4.3,-2.2)(3,1.8)
\rput(-.6,-1.9){\rnode{b1}{$(\ref{conknot5})$}}
\psframe[linecolor=darkred](-4.1,-1)(-3.1,1) \rput[b](-3.6,0){$e_1$}
\psframe[linecolor=darkred](-2.8,-1)(-1.8,1) \rput[b](-2.3,0){$o_2$}
\psframe[linecolor=darkred](-1.5,-1)(-.5,1) \rput[b](-1,0){$o_3$}
\rput[b](0,0){$\ldots$} \psframe[linecolor=darkred](.5,-1)(1.5,1)
\rput[b](1,0){$o_{2k-1}$} \psframe[linecolor=darkred](1.8,-1)(2.8,1)
\rput[b](2.3,0){$o_{2k}$}  \psline(-3.9,1)(-3.9,1.5)(2.6,1.5)(2.6,1)
\psline(-3.3,1)(-3.3,1.3)(-2.6,1.3)(-2.6,1)
\psline(-2,1)(-2,1.3)(-1.3,1.3)(-1.3,1)
\psline(-.7,1)(-.7,1.3)(-.2,1.3) \psline(.2,1.3)(.7,1.3)(.7,1)
\psline(1.3,1)(1.3,1.3)(2,1.3)(2,1)
\psline(-3.9,-1)(-3.9,-1.5)(2.6,-1.5)(2.6,-1)
\psline(-3.3,-1)(-3.3,-1.3)(-2.6,-1.3)(-2.6,-1)
\psline(-2,-1)(-2,-1.3)(-1.3,-1.3)(-1.3,-1)
\psline(-.7,-1)(-.7,-1.3)(-.2,-1.3) \psline(.2,-1.3)(.7,-1.3)(.7,-1)
\psline(1.3,-1)(1.3,-1.3)(2,-1.3)(2,-1)
\psline[arrowscale=1.5]{->}(.1,1.5)(-.1,1.5)
\psline[arrowscale=1.5]{<-}(-3,1.3)(-2.8,1.3)
\psline[arrowscale=1.5]{->}(-1.7,1.3)(-1.5,1.3)
\psline[arrowscale=1.5]{<-}(-.5,1.3)(-.3,1.3)
\psline[arrowscale=1.5]{->}(.4,1.3)(.6,1.3)
\psline[arrowscale=1.5]{<-}(1.6,1.3)(1.8,1.3)
\psline[arrowscale=1.5]{<-}(.1,-1.5)(-.1,-1.5)
\psline[arrowscale=1.5]{->}(-3,-1.3)(-2.8,-1.3)
\psline[arrowscale=1.5]{<-}(-1.7,-1.3)(-1.5,-1.3)
\psline[arrowscale=1.5]{->}(-.5,-1.3)(-.3,-1.3)
\psline[arrowscale=1.5]{<-}(.4,-1.3)(.6,-1.3)
\psline[arrowscale=1.5]{->}(1.6,-1.3)(1.8,-1.3)
\endpspicture \hskip .5cm \pspicture[.4](-4.3,-2.2)(3,1.8) \rput(-.6,-1.9){\rnode{b1}{$(\ref{conknot1}, ~\ref{conknot2})$}}
\psframe[linecolor=darkred](-4.1,-1)(-3.1,1) \rput[b](-3.6,0){$o_1$}
\psframe[linecolor=darkred](-2.8,-1)(-1.8,1) \rput[b](-2.3,0){$o_2$}
\psframe[linecolor=darkred](-1.5,-1)(-.5,1) \rput[b](-1,0){$o_3$}
\rput[b](0,0){$\ldots$} \psframe[linecolor=darkred](.5,-1)(1.5,1)
\rput[b](1,0){$o_{2k-1}$} \psframe[linecolor=darkred](1.8,-1)(2.8,1)
\rput[b](2.3,0){$o_{2k}$}  \psline(-3.9,1)(-3.9,1.5)(2.6,1.5)(2.6,1)
\psline(-3.3,1)(-3.3,1.3)(-2.6,1.3)(-2.6,1)
\psline(-2,1)(-2,1.3)(-1.3,1.3)(-1.3,1)
\psline(-.7,1)(-.7,1.3)(-.2,1.3) \psline(.2,1.3)(.7,1.3)(.7,1)
\psline(1.3,1)(1.3,1.3)(2,1.3)(2,1)
\psline(-3.9,-1)(-3.9,-1.5)(2.6,-1.5)(2.6,-1)
\psline(-3.3,-1)(-3.3,-1.3)(-2.6,-1.3)(-2.6,-1)
\psline(-2,-1)(-2,-1.3)(-1.3,-1.3)(-1.3,-1)
\psline(-.7,-1)(-.7,-1.3)(-.2,-1.3) \psline(.2,-1.3)(.7,-1.3)(.7,-1)
\psline(1.3,-1)(1.3,-1.3)(2,-1.3)(2,-1)
\psline[arrowscale=1.5]{->}(.1,1.5)(-.1,1.5)
\psline[arrowscale=1.5]{->}(-3,1.3)(-2.8,1.3)
\psline[arrowscale=1.5]{->}(-1.7,1.3)(-1.5,1.3)
\psline[arrowscale=1.5]{->}(-.5,1.3)(-.3,1.3)
\psline[arrowscale=1.5]{->}(.4,1.3)(.6,1.3)
\psline[arrowscale=1.5]{->}(1.6,1.3)(1.8,1.3)
\psline[arrowscale=1.5]{->}(.1,-1.5)(-.1,-1.5)
\psline[arrowscale=1.5]{->}(-3,-1.3)(-2.8,-1.3)
\psline[arrowscale=1.5]{->}(-1.7,-1.3)(-1.5,-1.3)
\psline[arrowscale=1.5]{->}(-.5,-1.3)(-.3,-1.3)
\psline[arrowscale=1.5]{->}(.4,-1.3)(.6,-1.3)
\psline[arrowscale=1.5]{->}(1.6,-1.3)(1.8,-1.3)
\endpspicture
$$
\caption{all oriented $n$-pretzel knots $L(p_1, p_2, \ldots , p_n)$}
\label{allnpretzel}
\end{figure}

To compute Conway Polynomials of $n$-pretzel links, we will use a new
notation for $n$-pretzel links which will be used for vertices of a
computation tree. We called a rectangle in Figure~\ref{allnpretzel}
\emph{a box} and the link moves in the \emph{same direction} in a
box if it has the orientation as in the second box from the left of the diagram
$(\ref{conknot4})$ of Figure~\ref{allnpretzel}, in the \emph{opposite directions}
if it has the orientation as in the first box from the left of the diagram
$(\ref{conknot4})$ in Figure~\ref{allnpretzel}. If we have a box for which two
strings move in the opposite directions and we use the skein relation at
this box, then the resulting links have either less number of the
boxes or less number of crossings. One can see that an opposite
direction can be happened only for a box with even number of
crossings (but this is not sufficient) except in the case that $n$
is even and all the $p_i$'s are odd (we will handle this case
separately). Suppose we have at least one even crossing box. We may
assume that it is $p_1=2l_1$. Let us remark that the Conway
polynomial vanishes for split links. The following is our new
notation for $n$-pretzel links. From a given $n$-pretzel link $L$
with an orientation $O$, we can represent $L$ by a vector in
$(\mathbb{Z}\times\mathbb{Z}_2)^n$ such as $(p_1^{\epsilon
_1},p_2^{\epsilon _2}, \ldots,p_n^{\epsilon _n})$, where $\epsilon
_i = 1(-1)$ if the link moves in the same(opposite, respectively)
direction in the box corresponding to $p_i$ with respect to the
given orientation $O$. Write $p_i^{1}=p_i$. First we find the
following recursive formula,

\begin{align*}
\cp_{L(p_1^{\epsilon _1},p_2^{\epsilon _2},\ldots ,p_i^{-1},\ldots
,p_n^{\epsilon _n})} &= \cp _{T_{(2,p_1^{\epsilon _1})}} \cp
_{T_{(2,p_2^{\epsilon _1})}} \ldots \hat{\cp _{T_{(2,p_i^{-1})}}}
\ldots \cp _{T_{(2,p_n^{\epsilon _n})}}\\ & -l_i z \cp
_{L(p_1^{\epsilon _1}, p_2^{\epsilon _2}, \ldots , \hat{p_i^{-1}},
\ldots , p_n^{\epsilon _n})},
\end{align*}
where the term under $\hat{~}$ is deleted.

By repeatedly using above formulae, we can make a computation
tree that there is no negative $\epsilon _i$ for the
representative at each vertex of valence $1$.
Then, we can expand $(\ldots, p_i,\ldots )$
into $(\ldots, p_i \pm 1(=p_i'),\ldots )$ and $(\ldots, p_i \pm 2,
\ldots )$ with suitable weights on edges, $1$ or $\pm z$ where
$|p_i|>|p_i'|$. We can keep on expanding at the crossings until all
the entries in the vectors  of vertices of valence $1$ are either $0$ or $\pm 1$. At this
stage, if it has more than two $0$'s then we stop the expansion and
change the vertex to zero because it is a split link. If it has only
one zero, it is a composite link of $T_{(2,p_i)}$'s. Otherwise, we
change the vector to an integral value $m$, the sum of the signs of
entries in the vector. In fact, it is the closed braid of two
strings represented by $\sigma_1^m$. Therefore, we can compute the
Conway polynomial of a link $L$ using this computation tree and the
Conway polynomial of closed $2$-braids.

\subsection{Conway polynomial of $n$-pretzel knots}

The general figures of $n$-pretzel knots are given in
Figure~\ref{allnpretzel} (the right-top one is a two components
link) where $e_1= 2l, o_i =2k_i+1$. We can see that there is at most
one box in which the knot moves in opposite directions. But for a
two component link, all boxes might move in opposite directions for
the orientation which is not in Figure~\ref{allnpretzel}.
Counterclockwise from the top-right, we get representatives,
$(o_1^{-1},o_2^{-1},\ldots , o_{2k}^{-1})$, $(o_1,o_2,\ldots
,o_{2k})$, $(o_1^{-1},o_2^{-1},$ $\ldots ,$ $ o_{2k+1}^{-1})$,
$(e_1^{-1}$ $, o_2,$ $ o_3,$ $\ldots ,$ $o_{2k+1})$ and $(e_1, o_2
,o_3, \ldots ,o_{2k})$. By using a computation tree for these
representatives, we find Theorem~\ref{npretzelconway}. For
convenience, we abbreviate $\cp _{T_{(2,n)}}$ by $\cp _{n}$
throughout the section.

\begin{thm}
 Let $e_1'=sign(e_1)(|e_1|-1)$, $o_i'=sign(o_i)(|o_i|-1)$,
$\alpha = \sum_{i=2}^{n} sign(o_i)$ and $\beta = sign(e_1)$. The
Conway polynomials of $n$-pretzel knots in Figure~\ref{allnpretzel}
are

\begin{align}
\cp _{L(o_1,o_2,o_3,\ldots,o_n)} &= \sum_{i=0}^{(n-1)/2} a_{i}
z^{2i}, \label{conknot3}\\ \cp_{L(e_1,o_2,o_3,\ldots,o_n)} &= \cp
_{o_2}\cp _{o_3}\ldots\cp _{o_n} [1-lz[-\frac{\alpha}{2} z +
\sum_{i=2}^{n} \frac{\cp _{o_{i}'}}{\cp _{o_{i}}}]],
\label{conknot4}
\\ \cp _{L(e_1,o_2,o_3,\ldots,o_n)} &= \cp _{o_2}\cp _{o_3}\ldots\cp
_{o_n} [\cp_{e_1'} + \cp_{e_1}[-\frac{\beta +\alpha}{2}z +
\sum_{i=2}^{n} \frac{\cp _{o_{i}'}}{\cp _{o_{i}}}]],\label{conknot5}
\\
 \cp _{L(o_1,o_2,o_3,\ldots,o_n)} &= \sum_{i=1}^{(n+1)/2} a_{i}
z^{2i-1},\label{conknot1} \\
 \cp _{L(o_1,o_2,o_3,\ldots,o_n)} &=  \cp _{o_1}\cp
_{o_2}\ldots\cp _{o_n} [\cp _{\sum_{i=1}^{n} sign(o_{i})} +
\sum_{i=1}^{n} \frac{\cp _{o_{i}'}}{\cp _{o_{i}}}],\label{conknot2}
\end{align}
where for $L(o_1,o_2,o_3,\ldots,o_n)$ we have two possible
orientations because it is a two components link, so we get
$\mathrm{(\ref{conknot1})}$ for $(o_1^{-1},o_2^{-1},\ldots ,
o_{2k}^{-1})$ and $\mathrm{ (\ref{conknot2})}$ for $(o_1,o_2,\ldots
,o_{2k})$. \label{npretzelconway}
\end{thm}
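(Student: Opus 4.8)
The plan is to execute, one orientation class at a time, the computation-tree algorithm set up just before the statement, feeding on two elementary inputs: the skein recursion for the closed $2$-braids, $\cp_{m}=z\,\cp_{m-1}+\cp_{m-2}$ (with $\cp_0=0$, $\cp_{\pm1}=1$, and the mirror convention $\cp_{-m}(z)=\cp_{m}(-z)$), and the reduction formula displayed before the theorem, which removes an opposite-direction box $p_i=2l_i$ at the cost of a factor $-l_i z$ and a product of torus-link polynomials. The whole argument is an induction descending the tree on the total crossing number $\sum_i|p_i|$, terminating at the base vertices already described: a vector with two or more zeros is a split link and contributes $0$; a vector with a single zero is a connected sum of the $T_{(2,p_j)}$ and contributes $\prod_j\cp_{p_j}$; and a vector with all entries in $\{0,\pm1\}$ collapses to $\sigma_1^m$ with $m=\sum_i\mathrm{sign}(p_i)$, contributing $\cp_m$.

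First I would dispose of the shape assertions $(\ref{conknot3})$ and $(\ref{conknot1})$, which carry no explicit coefficients. Since $L(o_1,\ldots,o_n)$ with $n$ odd and all $o_i$ odd is a knot, $\cp_L(-z)=\cp_L(z)$, forcing only even powers of $z$; the two-component orientation of $(\ref{conknot1})$ gives $\cp_L(-z)=-\cp_L(z)$, hence only odd powers. The stated degree bound comes from the canonical Seifert surface of the standard pretzel diagram, whose first Betti number is $n-1$, so $\deg\cp_L\le n-1$; this is precisely the data recorded in those two lines.

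For the explicit formulas I would begin with $(\ref{conknot2})$ and $(\ref{conknot5})$. In $(\ref{conknot2})$ every box is same-direction, so no box is removable by the reduction formula; instead I expand one box at a time by the skein recursion. At each peel the global factor $\cp_{o_1}\cdots\cp_{o_n}$ is preserved, the $L_0$-smoothing of box $i$ contributes the term $\cp_{o_i'}/\cp_{o_i}$ (a genuine polynomial only after multiplication by the product, since $\cp_{o_i}$ divides $\prod_j\cp_{o_j}$), and the fully reduced vertex contributes the base $2$-braid term $\cp_{\sum_i\mathrm{sign}(o_i)}$; summing the branches yields the bracket of $(\ref{conknot2})$. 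Formula $(\ref{conknot5})$ is the same computation with one same-direction \emph{even} box $e_1$: expanding $e_1$ produces the $\cp_{e_1'}$ term together with $\cp_{e_1}$ times the odd-box corrections, and the base $2$-braid records both $\beta=\mathrm{sign}(e_1)$ and $\alpha$, giving the $-\tfrac{\beta+\alpha}{2}z$ term. Finally $(\ref{conknot4})$ follows in one stroke: its even box is opposite-direction, so the reduction formula gives $\cp_{o_2}\cdots\cp_{o_n}-lz\,\cp_{L(o_2,\ldots,o_n)}$, and substituting the already-computed value of the residual all-odd two-component link produces the bracket $[\,1-lz(-\tfrac{\alpha}{2}z+\sum_{i\ge2}\cp_{o_i'}/\cp_{o_i})\,]$.

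The hard part will be the orientation bookkeeping, not the polynomial algebra. The data $\epsilon_i$, $\mathrm{sign}(o_i)$, $\alpha$ and $\beta$ must be tracked along every branch, and the delicate point is that an oriented smoothing can change a box's type: removing the even box in $(\ref{conknot4})$ flips the surviving odd boxes from same-direction to opposite-direction, so the residual link is the all-opposite link of $(\ref{conknot1})$ rather than the same-direction link of $(\ref{conknot2})$, which is exactly why its base contribution is the linear $-\tfrac{\alpha}{2}z$ and not the full torus polynomial $\cp_{\alpha}$. I would also verify carefully that each rational expression $\cp_{o_i'}/\cp_{o_i}$ is cleared by the surrounding product and that the negative-index convention for $\cp_m$ is applied consistently, since these are the places where a single sign slip would corrupt the final closed forms.
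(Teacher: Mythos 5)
Your overall strategy --- the computation tree, the reduction formula at an opposite--direction even box, and the three base cases (two zeros $\Rightarrow 0$, one zero $\Rightarrow$ connected sum, all $\pm1$ $\Rightarrow$ closed $2$-braid) --- is exactly the machinery the paper uses, and your symmetry argument $\cp_L(-z)=(-1)^{\mu-1}\cp_L(z)$ for the shape statements (\ref{conknot3}) and (\ref{conknot1}), together with the peeling argument for (\ref{conknot2}), is sound (and more explicit than the paper, which only writes out (\ref{conknot4})).

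However, the step you yourself flag as ``the delicate point'' is where the argument fails. You claim that resolving the even box in (\ref{conknot4}) ``flips the surviving odd boxes from same-direction to opposite-direction,'' so that the residual all-odd link contributes the base term $-\frac{\alpha}{2}z$ rather than $\cp_{\alpha}$. An oriented smoothing only changes connectivity at the crossing where it is performed; it does not reverse the orientation of any strand elsewhere in the diagram. Tracing the strands of $K(e_1,o_2,\ldots,o_n)$, the odd boxes carry parallel strands before the reduction and still carry parallel strands after the even box is capped off, so the residual link is the same-direction link governed by (\ref{conknot2}), whose base contribution is $\cp_{\alpha}$, not $-\frac{\alpha}{2}z$. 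This is not a sign quibble: for $K(-2,3,3)=T_{(3,4)}$ one has
\begin{equation*}
\cp_{K(-2,3,3)}=\cp_3^2+z\,\cp_{T_{(2,6)}}=(z^2+1)^2+z(z^5+4z^3+3z)=z^6+5z^4+5z^2+1,
\end{equation*}
which requires the residual $L(3,3)$ to be the parallel $(2,6)$ torus link with $\cp_6=z^5+4z^3+3z$; the opposite-direction reading would give a polynomial of the wrong degree or wrong coefficients. The same issue infects your derivation of (\ref{conknot5}): there every box, including $e_1$, is same-direction, so by your own base case the fully reduced leaf is the honest closed braid $\sigma_1^{\alpha+\beta}$ contributing $\cp_{\alpha+\beta}$, yet you assert it contributes $-\frac{\beta+\alpha}{2}z$ --- your proposal is internally inconsistent here (test against $L(2,3)=T_{(2,5)}$, whose Conway polynomial $z^4+3z^2+1$ is recovered only with the $\cp_{\alpha+\beta}$ base term). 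So the orientation bookkeeping must be redone: decide for each leaf of the tree whether the two residual strands are parallel or antiparallel by tracing the fixed orientation, and only then assign $\cp_m$ or $\tfrac{m}{2}z$ to it. As written, the justification of (\ref{conknot4}) and (\ref{conknot5}) does not go through.
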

\begin{proof}
We will only prove (\ref{conknot4}) but one can prove the other by a
similar argument. In the computation tree, we use skein relation
at crossings until vertices of valence 1
in the computation tree up to this point will be $(c_1,c_2,\dots ,c_n)$ where
$c_i$ is either 0 or $\pm 1$. Since the Conway polynomials of
split links vanish, we
may assume there are no than one 0's. The first
term in the parenthesis comes from the case where all $|c_i|$ are 1
because it is again the $(2,\alpha)$ torus link horizontally. It is
a two component link with linking number $- \alpha /2$, so its
Conway polynomial is $-(\alpha /2)z$. For the case where only one
$c_i = 0$, the values on edges to the vertex will contribute exactly
$\cp _{o_i '}$ and the vertex is the composite link of $(2,o_j)$
torus knots $j= 2, \ldots, n$ except $i$.
\end{proof}

\subsection{Conway polynomials of $n$-pretzel links} \label{conoflink}

Since we have already handled links of all odd crossings, we assume that
$n$-pretzel links have at least one even crossing box.
Let $L(p_1,p_2, \ldots, p_n)$ be an $n$-pretzel link and let $s$ be
the number of even $p_i$'s. Then it is a link of $s$ components. The Conway polynomial
$\cp_L$ depends on the choice of the orientation of $L$. There are
$2^{s-1}$ possible orientations of $L$. But one can easily see that
the link always moves in the same direction in all boxes of odd crossings
for arbitrary orientation. For further purpose, we will calculate
the Conway polynomial of the pretzel link with the following
orientations. For the existence of such orientations, we
will prove it in Lemma~\ref{orienlem} : if $n-s$ is even, then there exists an
orientation $O$ of $L$ such that the link $L$ moves in the opposite directions
in all boxes of even $p_i$. If $n-s$ is odd, then there exists an
orientation $O$ of $L$ such that the link $L$ moves in the opposite directions
in all boxes of even $p_i$ except one $p_t$ but without loss of
a generality we assume that $p_1=p_t$.

\begin{thm}
Let $L(p_1,p_2,\ldots ,p_n)$ be a pretzel link with the above
orientation $O$. Let $p_{e_i}=2l_i$ be all even and $p_{o_j}=2k_j+1$ be
all odd. Let $s$ be the number of even $p_i$'s and let $\alpha =
\sum_{i=1}^{n-s}sign(p_{o_i})$ and $\beta = sign(p_1)$. Let
$p_i'=sign(p_i)(|p_i|-1)$. If $n-s$ is even, then the Conway
polynomial of $L(p_1,p_2,\ldots ,p_n)$ is

$$[\prod_{i=1}^{s}(-l_i)]z^{s}(\prod_{i=1}^{n-s}\cp_{p_{o_i}})
[-\frac{\alpha}{2}z+\sum_{i=1}^{n-s}\frac{\cp_{p_{o_i}'}}{\cp_{p_{o_i}}}]
+ [\sum_{i=1}^{s} \prod_{j=1,j\neq i}^{s} (-l_j)]z^{s-1}.$$

If $n-s$ is odd, then the Conway polynomial of $L(p_1,p_2,\ldots
,p_n)$ is
$$[\prod_{i=2}^{s}(-l_i)]z^{s-1}(\prod_{i=1}^{n-s}\cp_{p_{o_i}})\cp_{p_1}
[ -\frac{\alpha+\beta}{2}z+ \frac{\cp_{p_{1}'}}{\cp_{p_1}}
+\sum_{i=1}^{n-s}\frac{\cp_{p_{o_i}'}}{\cp_{p_{o_i}}}]
+[\sum_{i=2}^{s} \prod_{j=2,j\neq i}^{s} (-l_j)]z^{s-2}.$$
\label{conwayoflink}
\end{thm}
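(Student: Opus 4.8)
The plan is to evaluate $\cp_{L}$ through the computation tree determined by the recursive formula established above, organizing everything as an induction on the number $s$ of even boxes (equivalently, on the number of components). The orientation $O$ furnished by Lemma~\ref{orienlem} is what makes this feasible: it renders every even box anti-parallel (when $n-s$ is even) or every even box but $p_1$ anti-parallel (when $n-s$ is odd), so that the recursive formula can be applied at an even box of type $p_i^{-1}$. Since $\alpha$ and all odd-box data $p_{o_i}',\cp_{p_{o_i}}$ depend only on the odd boxes, they are inert throughout the reduction, which is precisely what lets one closed expression propagate through the induction. The terminal cases are the $n$-pretzel \emph{knot} formulas of Theorem~\ref{npretzelconway}: the base $s=1$ is exactly (\ref{conknot4}) when $n-s$ is even and (\ref{conknot5}) when $n-s$ is odd, while the all-odd pretzel appearing in the leading term is handled by (\ref{conknot1})--(\ref{conknot2}).

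For the inductive step with $n-s$ even I apply the recursion at the anti-parallel even box $p_{e_1}=2l_1$, writing
\[
\cp_{L}=\prod_{j\neq e_1}\cp_{T_{(2,p_j^{\epsilon_j})}}\;-\;l_1 z\,\cp_{L(\widehat{p_{e_1}})}.
\]
The first (connected-sum) term is computed from multiplicativity of $\cp$ under connected sum together with the single local input that an anti-parallel even box is the oriented boundary of an $l$-twisted annulus, whose $1\times1$ Seifert matrix gives $\cp_{T_{(2,(2l)^{-1})}}=-l z$; thus each surviving even factor contributes $-l_j z$ and each odd factor contributes $\cp_{p_{o_j}}$. The second term is $-l_1 z$ times a pretzel link with one fewer even box, to which the inductive hypothesis applies once its orientation is re-read. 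Unrolling the recursion over all $s$ even boxes expresses $\cp_{L}$ as a sum over subsets of the even boxes; a configuration can be non-vanishing only when at most one even box fails to be ``used'', since trivializing two even boxes splits the link and $\cp$ vanishes on split links. The configuration using all $s$ even boxes yields the leading summand $\bigl[\prod_i(-l_i)\bigr]z^{s}$ times the all-odd pretzel polynomial $\bigl(\prod_i\cp_{p_{o_i}}\bigr)\bigl[-\tfrac{\alpha}{2}z+\sum_i \cp_{p_{o_i}'}/\cp_{p_{o_i}}\bigr]$ supplied by (\ref{conknot1})--(\ref{conknot2}), while the $s-1$ configurations that leave one even box out yield the residual summand $\bigl[\sum_i\prod_{j\neq i}(-l_j)\bigr]z^{s-1}$.

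To control the induction I would exploit the algebraic identity that the claimed expression must satisfy: writing $F(s)$ for it, one computes $F(s)+l_1 z\,F(s-1)=\bigl(\prod_{i=2}^{s}(-l_i)\bigr)z^{s-1}$, where the degree-$s$ parts cancel because $(-l_1)+l_1=0$ and the degree-$(s-1)$ coefficients collapse, by the same cancellation, to the single term $\prod_{i=2}^{s}(-l_i)$. Proving the theorem then reduces to showing that the correctly oriented evaluation of the recursion reproduces this residue at each level, rather than the naive connected-sum value, which still carries the spurious odd-box factor $\prod_i\cp_{p_{o_i}}$. The case $n-s$ odd is run identically, except that the distinguished parallel even box $p_1$ is never smoothed: it rides along through every step and contributes the extra factor $\cp_{p_1}$, the term $\cp_{p_1'}/\cp_{p_1}$, and the sign $\beta=\mathrm{sign}(p_1)$ exactly as the lone even box does in (\ref{conknot5}), which is the base case in that parity; this shifts every power of $z$ down by one and produces the second displayed formula.

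The hard part will be the orientation and sign bookkeeping inside the recursion. I must confirm that smoothing one anti-parallel even box leaves the remaining even boxes anti-parallel under the induced orientation, so that the inductive hypothesis is genuinely available in the same parity class, and I must correctly identify each intermediate link produced by the turnback smoothing, distinguishing when it is a smaller pretzel, a plat-closed unknot, or a split link, since these alternatives fix whether a given factor is $\cp_{p_{o_i}}$, $-l_i z$, $1$, or $0$. A reliable guard at every stage is the parity of the Conway polynomial: an $s$-component link has $\cp$ of lowest $z$-degree $s-1$ and of fixed parity, which immediately exposes any stray term introduced by a mis-tracked orientation. Once these identifications are pinned down, both displayed formulas follow from the telescoping identity above together with Theorem~\ref{npretzelconway}.
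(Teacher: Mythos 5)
Your overall route is the same as the paper's: the paper's entire proof of Theorem~\ref{conwayoflink} is the one\-/line remark that the formula follows ``by choosing'' the representatives $(p_{e_1}^{-1},\ldots,p_{e_s}^{-1},p_{o_1},\ldots,p_{o_{n-s}})$ and $(p_{e_1},p_{e_2}^{-1},\ldots)$, i.e.\ by unrolling the displayed recursion at the anti-parallel even boxes until Theorem~\ref{npretzelconway} applies, which is exactly your induction on $s$. Your identification of the terminal configurations, the value $-l_jz$ for an anti-parallel even box, and the telescoping identity $F(s)+l_1z\,F(s-1)=\bigl(\prod_{i=2}^{s}(-l_i)\bigr)z^{s-1}$ are all correct (one small slip: there are $s$, not $s-1$, configurations leaving one even box unused, matching the $s$ terms of $\sum_{i=1}^{s}\prod_{j\neq i}(-l_j)$).

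The genuine gap is the step you defer at the end of your third paragraph. You correctly compute that each terminal connected-sum vertex contributes $\prod_{j>k}(-l_jz)\cdot\prod_i\cp_{p_{o_i}}$, so the residual summand of the unrolled tree is $\bigl[\sum_{i}\prod_{j\neq i}(-l_j)\bigr]z^{s-1}\prod_i\cp_{p_{o_i}}$, and you then propose to show that a ``correctly oriented evaluation'' removes the ``spurious'' factor $\prod_i\cp_{p_{o_i}}$ so as to land on the displayed formula. That step cannot be carried out: the $0$-resolution of an anti-parallel even box really does produce the connected sum of the $(2,p_j)$ torus links over the remaining boxes, $\cp$ is multiplicative under connected sum, and the odd factors do not cancel. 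Indeed, already at your own base case $s=1$ the discrepancy is visible: equation~(\ref{conknot4}) reads $\cp_{o_2}\cdots\cp_{o_n}\bigl[1-lz[\cdots]\bigr]$, whose residual part is $\prod_i\cp_{o_i}$ and not $1$, whereas Theorem~\ref{conwayoflink} with $s=1$ has residual term $1\cdot z^{0}$; so the statement as printed is not what your (correct) computation proves, and the factor $\prod_i\cp_{p_{o_i}}$ must multiply the second summand in both displayed formulas. This is not a harmless normalization: for $L(2,3,-3,2)$ the first summand vanishes identically, and only the factor-bearing residual gives $\deg\cp=5=2g+\mu-1$, which is the degree the genus computation in Theorem~\ref{linkgenus} requires. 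You should therefore finish the induction with the factor-bearing residual --- your telescoping identity then matches the recursion exactly, since $T_1=\prod_{j\ge2}(-l_j)z^{s-1}\prod_i\cp_{p_{o_i}}$ --- and record the corrected statement, rather than trying to argue the factor away.
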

\begin{proof}
It is clear by choosing $(p_{e_1}^{-1}$, $p_{e_2}^{-1}$, $\ldots$,
$p_{e_s}^{-1}$, $p_{o_1}$, $\ldots$, $p_{o_{n-s}})$ and $(p_{e_1}, $
$p_{e_2}^{-1},$ $\ldots$, $p_{e_s}^{-1}$, $p_{o_1}$, $\ldots$,
$p_{o_{n-s}})$, respectively.
\end{proof}

More generally, we get the following results by taking
$(p_{e_1}^{-1}$, $p_{e_2}^{-1}$, $\ldots$, $p_{e_t}^{-1}$,
$p_{e_{t+1}}$, $\ldots$, $ p_{e_s}$, $p_{o_1}$, $\ldots$, $
p_{o_{n-s}})$ for a representative of $L(p_1,p_2,\ldots ,p_n)$
induced by an orientation $O$.

\begin{thm}
Let $p_{e_i}=2l_i$ be all even and $p_{o_j}=2k_j+1$ be all odd. Let
$s$ be the number of even $p_i$. Let $t$ be the number of even $p_i$
in the corresponding boxes in which the link moves in the opposite direction, say
$p_{e_i}$ where $i=1, 2$, $\ldots$, $t$. and let $\alpha =
\sum_{j=1}^{n-s}sign(p_{o_j})$ and $\beta =
\sum_{i=t+1}^{s}sign(p_{e_i})$. Let $p_i'=sign(p_i)(|p_i|-1)$. Then
the Conway polynomial of $L(p_1,p_2,\ldots ,p_n)$ with the
orientation $O$ is

\begin{align*}
[\prod_{i=1}^{t}(-l_i)]z^{t}(\prod_{i=1}^{n-s}\cp_{p_{o_i}})
(\prod_{j=1}^{t}\cp_{p_{e_j}}) [ -\frac{\alpha+\beta}{2}z +
\sum_{i=t+1}^{s}\frac{\cp_{p_{e_i}'}}{\cp_{p_{e_i}}} \\
+\sum_{j=1}^{n-s}\frac{\cp_{p_{o_j}'}}{\cp_{p_{o_j}}}]
+[\sum_{i=1}^{t} \prod_{j=1,j\neq i}^{t} (-l_j)]z^{t-1}.
\end{align*} \label{conwayoflink2}
\end{thm}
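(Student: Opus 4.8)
The plan is to prove the formula by induction on $t$, the number of even boxes in which the two strands run in opposite directions, stripping these boxes off one at a time with the recursive skein formula of Section~\ref{poly}. I start from the chosen representative $(p_{e_1}^{-1},\ldots,p_{e_t}^{-1},p_{e_{t+1}},\ldots,p_{e_s},p_{o_1},\ldots,p_{o_{n-s}})$ of $L(p_1,\ldots,p_n)$ under the orientation $O$. Applying the recursion at the opposite box $p_{e_t}^{-1}$ writes $\cp_L$ as a base product $\prod_{j\neq e_t}\cp_{T_{(2,p_j^{\epsilon_j})}}$ minus $l_t z$ times the Conway polynomial of the link obtained by deleting the box $p_{e_t}$, which is again a link of exactly the same type but with $t$, $s$ and $n$ each lowered by one (and with $\alpha,\beta$ unchanged, since no odd or same-direction even box is disturbed). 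So the induction hypothesis applies to the deleted link, and unrolling the recursion yields
\[
\cp_L=\Bigl(\prod_{i=1}^{t}(-l_i)\Bigr)z^{t}\,\cp_{L^{(0)}}+\sum_{k=1}^{t}\Bigl(\prod_{j=k+1}^{t}(-l_j)\Bigr)z^{\,t-k}B_k,
\]
where $L^{(0)}$ is the all--same--direction link $L(p_{e_{t+1}},\ldots,p_{e_s},p_{o_1},\ldots,p_{o_{n-s}})$ and $B_k$ is the base product produced when the $k$-th opposite box is removed.

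The base case $t=0$ is the evaluation of $\cp_{L^{(0)}}$, and here I would repeat verbatim the computation-tree argument of Theorem~\ref{npretzelconway}: expand at the crossings of every (now same-direction) box until each entry of the vector is $0$ or $\pm1$. The unique leaf in which no entry vanishes is the $(2,\alpha+\beta)$ torus link, a two-string closure of linking number $-\tfrac{\alpha+\beta}{2}$, contributing $-\tfrac{\alpha+\beta}{2}z$; the leaves with exactly one vanishing entry are composite links and contribute the terms $\cp_{p_{e_i}'}$ (for $t<i\le s$) and $\cp_{p_{o_j}'}$; leaves with two or more zeros are split and contribute $0$. Collecting these and factoring out $\prod_{t<i\le s}\cp_{p_{e_i}}\prod_{j}\cp_{p_{o_j}}$ reproduces the bracketed factor of the statement, so that $\bigl(\prod_{i=1}^t(-l_i)\bigr)z^t\cp_{L^{(0)}}$ is the main summand.

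It then remains to show that the base-term sum collapses to the single monomial $\bigl[\sum_{i=1}^{t}\prod_{j\neq i}(-l_j)\bigr]z^{\,t-1}$. I would do this by isolating, inside each $B_k$, the leaf in which exactly one opposite box is resolved trivially while the remaining $t-1$ opposite boxes each carry a weight $-l_j z$; the anti-parallel even torus factors $\cp_{T_{(2,p_{e_i}^{-1})}}$ occurring in $B_k$ are what supply these weights, while every other leaf of $B_k$ is either split (hence $0$) or already absorbed into the main summand. The main obstacle is precisely this last bookkeeping: one must compute the Conway polynomials of the anti-parallel torus links $T_{(2,p^{-1})}$, track how the split and composite leaves distribute among the $B_k$, and verify that all powers $z^{t-k}$ with $k<t$ cancel so that only $z^{t-1}$ survives with coefficient $\sum_{i}\prod_{j\neq i}(-l_j)$. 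By contrast the main-term computation is a routine extension of Theorem~\ref{conwayoflink}, whose two special cases $t=s$ and $t=s-1$ give a complete check on both the bracket and the normalization of the correction term.
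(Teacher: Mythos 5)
Your overall strategy --- take the representative $(p_{e_1}^{-1},\ldots,p_{e_t}^{-1},p_{e_{t+1}},\ldots,p_{e_s},p_{o_1},\ldots,p_{o_{n-s}})$, strip the anti-parallel even boxes one at a time with the recursive formula of Section~\ref{poly}, and evaluate the residual all-parallel link by the computation-tree argument of Theorem~\ref{npretzelconway} --- is exactly what the paper intends; the paper in fact offers nothing beyond the single sentence naming that representative, so your unrolled identity $\cp_L=(\prod_{i=1}^t(-l_i))z^t\cp_{L^{(0)}}+\sum_{k=1}^t(\prod_{j=k+1}^t(-l_j))z^{t-k}B_k$ is already more of a proof than the source contains. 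Your treatment of the main summand is correct, with the caveat that the prefactor $\prod_{j=1}^{t}\cp_{p_{e_j}}$ in the statement must be read as $\prod_{j=t+1}^{s}\cp_{p_{e_j}}$ (as you implicitly do), both so that the denominators $\cp_{p_{e_i}}$, $i>t$, inside the bracket clear, and for consistency with Theorem~\ref{conwayoflink} at $t=s$.

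The genuine gap is the step you defer as ``the main obstacle,'' and it cannot be closed in the form you hope for. Each $B_k$ is the product of $\cp_{T_{(2,p_j^{\epsilon_j})}}$ over all boxes still present except the one being resolved, so $B_k=(\prod_{i=1}^{k-1}(-l_iz))\cdot P$ with $P=(\prod_{i=t+1}^{s}\cp_{p_{e_i}})(\prod_{j=1}^{n-s}\cp_{p_{o_j}})$, using $\cp_{T_{(2,(2l_i)^{-1})}}=-l_iz$ for the anti-parallel even torus factors. Summing, the base terms contribute $[\sum_{i=1}^{t}\prod_{j\neq i}(-l_j)]z^{t-1}\cdot P$, i.e.\ the claimed monomial times the full product $P$, and $P$ is not $1$ once some $|o_j|>1$ or $t<s$. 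No redistribution of split and composite leaves can make $P$ disappear: a parity check already forbids it, since the Conway polynomial of an $s$-component link is supported in degrees congruent to $s-1$ modulo $2$, the term $z^{t-1}P$ has that parity, but $z^{t-1}$ alone does not when $s-t$ is odd (for $L(2,2,3)$, for instance, the printed formula produces a nonzero constant term, which is impossible for a two-component link). So either the printed correction term is missing the factor $P$, or the statement requires reinterpretation; in either case you should carry your own unrolling to its conclusion and record the correction term as $[\sum_{i}\prod_{j\neq i}(-l_j)]z^{t-1}P$, rather than leaving the asserted collapse as an unverified bookkeeping claim --- as written, the verification you postpone is one that would fail.
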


\section{Genera of $n$-pretzel links}
\label{genera}

We will consider the genus of an $n$-pretzel link with at least one
even crossing box. Let $F_L$ be a Seifert surface of an $n$-pretzel
link $L$. For the rest of the section, let $\chi (\F_L)$ be the
Euler characteristic of $\F_L$, $V$ be the number of Seifert
circles, $E$ be the number of crossings and $F$ be the number of the
components of $L$.

\subsection{Genera of $n$-pretzel knots with one even $p_i$}

We divide into two cases : $i)$ $n$ is odd, $ii)$ $n$ is even. For the
first case: $n$ is odd,
 we can see that
the degree of $\cp_{K(e_1,o_1,o_2,\ldots ,o_n)}$ is
$$2+\prod_{i=2}^{n} \mathrm{degree}(\cp_{o_i})
=2+\sum_{i=2}^{n}(|o_{i}|-1),$$ and the coefficient of this leading
term is $-l\alpha /2$ from Theorem~\ref{npretzelconway}.

Suppose $\alpha$ is nonzero. Then the Seifert surface $\F$ obtained
by applying Seifert's algorithm to the diagram in Figure~\ref{allnpretzel} is a
minimal genus surface. The genus of the Seifert surface $\F_K$ is

\begin{align*}
g(\F_K) &=\frac12\pol [2-\chi (\F_K)] = \frac12~ (2-V+E-F)\\\notag
&=\frac12~ [2-(|e_1|+n-2)+ (|e_1|+\sum_{i=2}^{n}|o_i|) -1]
=\frac12~[2+\sum_{i=2}^{n} (|o_1|-1)]\\\notag & =\frac12~
\mathrm{degree}~ \cp _{K(e_1,o_1,o_2,\ldots ,o_n)}.
\end{align*}

Suppose $\alpha = 0$. This means that we have the same number of
positive and negative twists on odd twists. If we look at the Conway
polynomial in equation~\ref{conknot4}, we drop exactly one in degree with new leading
coefficient $1$. It is sufficient to show that the degree of the
following term is negative. Remark that $\cp _{o_i}=\cp _{-o_i}$.

\begin{align*}
-lz[-\frac{\alpha}{2} z + \sum_{i=2}^{n} \frac{\cp _{o_{i}'}}{\cp
_{o_{i}}}] &= -l[0 + \sum_{i=2}^{n} \frac{z\cp _{o_{i}'}}{\cp
_{o_{i}}}] = -l[ \sum_{i=2}^{n} \frac{sign(o_i)(\cp _{|o_i|}- \cp
_{|o_{i}|-2})}{\cp _{|o_{i}|}}]\\\notag &= -l[ \sum_{i=2}^{n}
(sign(o_i) +  \frac{\cp _{|o_{i}|-2}}{\cp _{|o_i|}})] = -l[
\sum_{i=2}^{n} \frac{\cp _{|o_{i}|-2}}{\cp _{|o_{i}|}}].
\end{align*}

We hope to find a minimal surface of this genus. For the first case,
the sign of an $n$-pretzel is $(\pm ,\pm ,\ldots, \pm
,even,\mp,\mp,\ldots ,\mp)$. The rule is to use the move from the
outmost pair. Then the moves in Figure~\ref{minsurnpretzel} will
increase $V$ by two but will not change $E, F(=1)$; thus we get a
surface with one less genus. If we represent the move by the Conway
notation for algebraic links~\cite{Conway:enum}, it is either
$(\ldots, -a,\ldots,b,\ldots)$ $\Rightarrow$ $(\ldots,
(-1)(-a+1),\ldots,(b-1)(1),\ldots)$ or $(\ldots,
a,\ldots,-b,\ldots)$ $\Rightarrow$ $(\ldots, (1)(a-1),\ldots$,$(-1)$
$(-b+1)$,$\ldots)$ where the sign sum of the $o_i$'s between $a,b$
has to be zero.

For the general case, if we only look at the signs of the odd twists
from $o_1$, we can find a pair $o_i, o_j$ such that we can apply the
move we described above. The resulting diagram satisfies the same hypothesis
with strictly smaller twisted bands. Inductively we get a
well-defined sequence of moves which makes the desired diagram on
which Seifert's algorithm will produce a minimal genus surface.
Figure~\ref{minsurnpretzel} shows the effect on $V, E$. This
completes the case $i)$.

For the second case, $n$ is even, we can see that the degree of
$\cp_{K(e_1,o_1,o_2,\ldots ,o_n)}$ is
$$1+\mathrm{degree}(\cp_{e_1})+\prod_{i=2}^{n}
\mathrm{degree}(\cp_{o_i})=|e_1|+\sum_{i=2}^{n}(|o_{i}|-1),$$ and
the coefficient of the leading term is $-sign(e_1)(\alpha +\beta)/2$
from Theorem~\ref{conwayoflink2}.

Suppose $\alpha +\beta$ is nonzero. Then the Seifert surface $F$ obtained by applying
Seifert's algorithm to the diagram in Figure~\ref{allnpretzel} is a minimal genus surface.
The genus of the Seifert surface $F_K$ is

\begin{align*}
g(F_K) &=\frac12~[2-\chi (F_K)] = \frac12~ (2-V+E-F)\\\notag
     &=\frac12~ [2-(n)+ [|e_1|+\sum_{i=2}^{n}(|o_i|)] -1]
     =\frac12~ [|e_1|+\sum_{i=2}^{n} (|o_1|-1)]\\\notag
     &=\frac12~ \mathrm{degree} \cp _{K(e_1,o_1,o_2,\ldots ,o_n)} .
\end{align*}

Suppose $\alpha +\beta = 0$. This means that we have the same number
of positive and negative twists. As we did before we drop exactly
one in the degree of the Conway polynomial in equation~\ref{conknot5} with new leading
coefficient $1$. All arguments are the same if we change the term in
parentheses in the equation as follows.

\begin{align*}
[\cp_{e_1'} + \cp_{e_1}(-\frac{\beta +\alpha}{2}z + \sum_{i=2}^{n}
\frac{\cp _{o_{i}'}}{\cp _{o_{i}}})] &=\cp_{e_1}[-\frac{\beta
+\alpha}{2}z + \frac{\cp _{e_{1}'}}{\cp _{e_{1}}} + \sum_{i=2}^{n}
\frac{\cp _{o_{i}'}}{\cp _{o_{i}}}].
\end{align*}

\begin{figure}
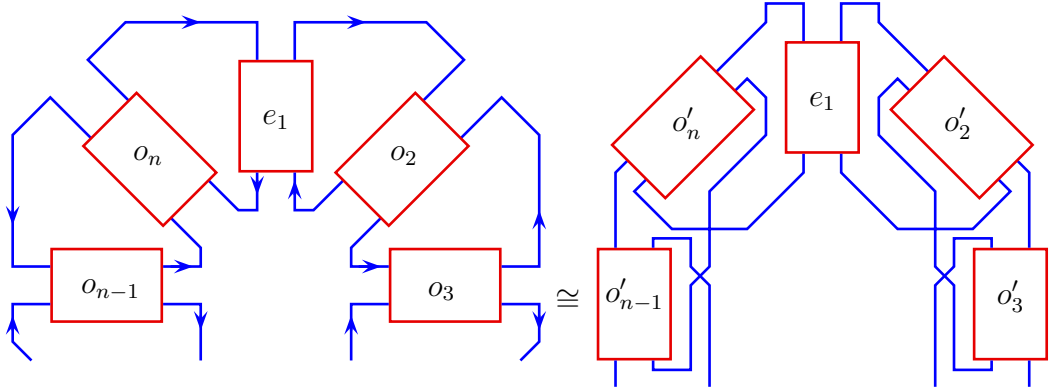
 $$\pspicture[.2](-3.5,-1.4)(3.6,4)
\rput(0,2.2){\rnode{b1}{$e_1$}}
\psframe[linecolor=darkred](-.5,1.5)(.5,3)
\psframe[linecolor=darkred](1.5,-.5)(3,.5)
\psframe[linecolor=darkred](-1.5,-.5)(-3,.5)
\psline(1,1.5)(.5,1)(.25,1)(.25,1.5)
\psline[arrowscale=1.5]{->}(.25,1.2)(.25,1.4)
\psline(-1,1.5)(-.5,1)(-.25,1)(-.25,1.5)
\psline[arrowscale=1.5]{<-}(-.25,1.2)(-.25,1.4)
\psline(1.5,1)(1,.5)(1,.25)(1.5,.25)
\psline[arrowscale=1.5]{->}(1.1,.25)(1.4,.25)
\psline(-1.5,1)(-1,.5)(-1,.25)(-1.5,.25)
\psline[arrowscale=1.5]{->}(-1.4,.25)(-1.1,.25)
\psline(1.5,-.25)(1,-.25)(1,-1)
\psline[arrowscale=1.5]{->}(1,-.6)(1,-.4)
\psline(-1.5,-.25)(-1,-.25)(-1,-1)
\psline[arrowscale=1.5]{<-}(-1,-.6)(-1,-.4)
\psline(2,1.5)(3,2.5)(3.5,2)(3.5,.25)(3,.25)
\psline[arrowscale=1.5]{<-}(3.5,1)(3.5,.8)
\psline(-2,1.5)(-3,2.5)(-3.5,2)(-3.5,.25)(-3,.25)
\psline[arrowscale=1.5]{->}(-3.5,1)(-3.5,.8)
\psline(1.5,2)(2.5,3)(2,3.5)(.25,3.5)(.25,3)
\psline[arrowscale=1.5]{<-}(1.2,3.5)(1,3.5)
\psline(-1.5,2)(-2.5,3)(-2,3.5)(-.25,3.5)(-.25,3)
\psline[arrowscale=1.5]{->}(-1.2,3.5)(-1,3.5)
\psline(3,-.25)(3.5,-.25)(3.5,-.75)(3.25,-1)
\psline[arrowscale=1.5]{->}(3.5,-.4)(3.5,-.6)
\psline(-3,-.25)(-3.5,-.25)(-3.5,-.75)(-3.25,-1)
\psline[arrowscale=1.5]{<-}(-3.5,-.4)(-3.5,-.6)
\pspolygon[linecolor=darkred,fillcolor=white,fillstyle=solid](1.5,.793)(.793,1.5)(1.853,2.56)(2.56,1.853)
\pspolygon[linecolor=darkred,fillcolor=white,fillstyle=solid](-1.5,.793)(-.793,1.5)(-1.853,2.56)(-2.56,1.853)
 \rput[b](1.7,1.6){$o_2$}  \rput[b](-1.7,1.6){$o_{n}$}
\rput[b](2.2,-.2){$o_3$}  \rput[b](-2.2,-.2){$o_{n-1}$}
\endpspicture \cong \pspicture[.2](-3.1,-1.65)(3.5,3.75)
\rput(0,2.2){\rnode{b1}{$e_1$}}
\psframe[linecolor=darkred](-.5,1.5)(.5,3)
\psframe[linecolor=darkred](2,-1.25)(3,.25)
\psframe[linecolor=darkred](-2,-1.25)(-3,.25)
\psline(.25,3)(.25,3.5)(.75,3.5)(.75,3.25)(2.75,1.25)(2.75,.25)
\psline(2.75,-1.25)(2.75,-1.6)
\psline(-.25,3)(-.25,3.5)(-.75,3.5)(-.75,3.25)(-2.75,1.25)(-2.75,.25)
\psline(-2.75,-1.25)(-2.75,-1.6)
\psline(2.25,-1.25)(2.25,-1.375)(1.75,-1.375)(1.75,-.25)(1.5,0)(1.5,1)(.75,1.75)(.75,2.25)(1,2.5)(2.5,1)(2,.5)(1,.5)(.25,1.25)(.25,1.5)
\psline(2.25,.25)(2.25,.375)(1.75,.375)(1.75,0)(1.5,-.25)(1.5,-1.6)
\psline(-2.25,-1.25)(-2.25,-1.375)(-1.75,-1.375)(-1.75,-.25)(-1.5,0)(-1.5,1)(-.75,1.75)(-.75,2.25)(-1,2.5)(-2.5,1)(-2,.5)(-1,.5)(-.25,1.25)(-.25,1.5)
\psline(-2.25,.25)(-2.25,.375)(-1.75,.375)(-1.75,0)(-1.5,-.25)(-1.5,-1.6)
\pspolygon[linecolor=darkred,fillcolor=white,fillstyle=solid](1.618,2.795)(2.795,1.618)(2.088,.911)(.911,2.088)
\pspolygon[linecolor=darkred,fillcolor=white,fillstyle=solid](-1.618,2.795)(-2.795,1.618)(-2.088,.911)(-.911,2.088)
 \rput[b](1.8,1.7){$o_2'$}  \rput[b](-1.8,1.7){$o_{n}'$}
\rput[b](2.5,-.6){$o_3'$}  \rput[b](-2.5,-.6){$o_{n-1}'$}
\endpspicture $$
\caption{How to modify a diagram in Figure~\ref{allnpretzel} to find a minimal genus diagram of
$L(p_1, p_2, \ldots , p_n)$.} \label{minsurnpretzel}
\end{figure}

We can find a minimal surface of this genus by the same method as
shown in Figure~\ref{minsurnpretzel} if we handle the even crossing
box together. This gives us the following theorem.

\begin{thm}
Let $K(p_1,o_2,o_3, \ldots, o_n)$ be an $n$-pretzel knot with one
even $p_1$. Let $\alpha$ $= \sum_{i=2}^{n}$ $ sign(o_i)$ and $\beta
$$= sign(p_1)$. Suppose $|p_1|, |o_i| \ge 2$. Let
$$\delta=\sum_{i=2}^{n}(|o_{i}|-1).$$ Then the
genus $g(K)$ of $K$ is
$$g(K) = \left\{
\begin{array}{cl}
\frac12~ (\delta+2) & ~~\mathrm{if}~ n~
\mathrm{is}~\mathrm{odd}~\mathrm{and}~ \alpha \neq 0, \\
 \frac12~ \delta & ~~\mathrm{if}~ n~
\mathrm{is}~\mathrm{even}~\mathrm{and}~
 \alpha = 0, \\
 \frac12~ (|p_1|+\delta) & ~~\mathrm{if}~ n
~\mathrm{is}~\mathrm{even}~\mathrm{and}~\alpha + \beta \neq 0, \\
\frac12~ (|p_1|+\delta)-1
  & ~~\mathrm{if}~ n ~\mathrm{is}~\mathrm{even}~\mathrm{and}~
 \alpha + \beta = 0.
\end{array} \right.
$$
 \label{knotgenus}
\end{thm}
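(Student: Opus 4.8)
The plan is to sandwich $g(K)$ between the lower bound coming from Torres' inequality (\ref{genineqal}) and the upper bound given by an explicit canonical Seifert surface, treating $n$ odd and $n$ even separately exactly as the preceding discussion sets up. The engine is Theorem~\ref{npretzelconway}: equation (\ref{conknot4}) governs the odd case and (\ref{conknot5}) (equivalently Theorem~\ref{conwayoflink2}) the even case, so the whole argument reduces to reading the $z$-degree and leading coefficient of $\cp_K$ off these formulas and then matching them against a surface.

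First I would pin down $\deg_z\cp_K$. Since $K$ is a knot, $\mu=1$ and (\ref{genineqal}) becomes $2g(K)\ge \deg_z\cp_K$, so a nonvanishing top coefficient is exactly what I need. Using $\deg\cp_{o_i}=|o_i|-1$, $\deg\cp_{e_1}=|e_1|-1$, the fact that each $\cp_{o_i}$ is monic, and that $\frac{\cp_{o_i'}}{\cp_{o_i}}\sim \mathrm{sign}(o_i)\,z^{-1}$ (from the skein relation $z\cp_{|o_i|-1}=\cp_{|o_i|}-\cp_{|o_i|-2}$), I read off the bracket in (\ref{conknot4}): its leading term is $-\tfrac{l\alpha}{2}z^{2}$ when $\alpha\ne 0$, giving $\deg_z\cp_K=\delta+2$, while for $\alpha=0$ the constant $1$ dominates and $\deg_z\cp_K=\delta$ robustly. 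For (\ref{conknot5}) the term $-\tfrac{\mathrm{sign}(e_1)(\alpha+\beta)}{2}\,z\,\cp_{e_1}$ has degree $|e_1|$ and survives precisely when $\alpha+\beta\ne 0$, giving $\deg_z\cp_K=|e_1|+\delta$. In each surviving case the top coefficient is a nonzero multiple of the monic product $\prod\cp_{o_i}$, so Torres yields $g(K)\ge\tfrac12\deg_z\cp_K$.

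For the matching upper bound I would apply Seifert's algorithm to the diagram of Figure~\ref{allnpretzel} and compute $g(\F_K)=\tfrac12(2-V+E-F)$ with $F=1$, $E=|e_1|+\sum_{i=2}^{n}|o_i|$, and $V=|e_1|+n-2$ in the odd case resp.\ $V=n$ in the even case; the arithmetic carried out in the text shows $g(\F_K)=\tfrac12\deg_z\cp_K$, so the two bounds coincide. When instead $\alpha=0$ (odd) or $\alpha+\beta=0$ (even) the standard surface is too large, and I would first modify the diagram as in Figure~\ref{minsurnpretzel}: selecting a nested pair of opposite-sign odd twist regions---available precisely because $\alpha=0$ forces equally many $+$ and $-$ odd twists---and sliding, which raises $V$ by two while fixing $E$ and $F$, hence drops the surface genus by one. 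Inducting on the number of twisted bands should produce a diagram on which Seifert's algorithm realizes the reduced value, $\tfrac12\delta$ resp.\ $\tfrac12(|p_1|+\delta)-1$.

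The main obstacle is the balanced case $\alpha+\beta=0$ for $n$ even. Here I must show the degree of $\cp_K$ drops by \emph{exactly} the amount the modified surface accounts for and that the resulting leading coefficient is nonzero, so that Torres stays sharp. This is delicate: once the $z^{|e_1|}$ coefficient $\propto\alpha+\beta$ vanishes, the bracket in (\ref{conknot5}) reduces to $\cp_{e_1'}+\cp_{e_1}\sum_{i=2}^{n}\frac{\cp_{o_i'}}{\cp_{o_i}}$, whose next coefficient is $1+\alpha\beta$; but $\alpha+\beta=0$ forces $\alpha\beta=-1$, so this coefficient vanishes too and the degree can fall by more than the single step the surface move provides. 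Thus the crux is to control this ``over-cancellation''---either by excluding degenerate sign patterns such as an exactly canceling pair $o_i=-o_j$, or by supplying extra pairing moves of Figure~\ref{minsurnpretzel} that lower the surface genus in lockstep with whatever the true Conway degree turns out to be. By contrast, the odd balanced case is clean, since the constant $1$ in (\ref{conknot4}) dominates and no such secondary cancellation can occur.
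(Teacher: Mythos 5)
Your strategy is the same as the paper's: extract the $z$-degree and leading coefficient of $\cp_K$ from Theorem~\ref{npretzelconway} (equations (\ref{conknot4}) and (\ref{conknot5})), use Torres' inequality (\ref{genineqal}) with $\mu=1$ as the lower bound, and match it against the canonical surface of Figure~\ref{allnpretzel}, modified by the pairing move of Figure~\ref{minsurnpretzel} in the balanced cases. Your treatment of the cases $n$ odd with $\alpha\neq 0$, $n$ odd with $\alpha=0$ (which is what the second line of the statement must mean: for $n$ even, $\alpha$ is a sum of an odd number of signs and cannot vanish), and $n$ even with $\alpha+\beta\neq 0$ coincides with the paper's and is sound, including your observation that in the odd balanced case the constant $1$ in (\ref{conknot4}) protects the leading coefficient from any secondary cancellation.

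The difficulty you flag in the case $n$ even, $\alpha+\beta=0$ is a genuine gap, and it is not one you could have closed by being more careful: it is present in the paper's own proof, which asserts that the degree drops by exactly one step ``with new leading coefficient $1$.'' Your computation refutes this. Writing the bracket of (\ref{conknot5}) as $\cp_{e_1'}+\cp_{e_1}\sum_{i\ge 2}\cp_{o_i'}/\cp_{o_i}$, the coefficient of $z^{|e_1|+\delta-2}$ in $\cp_K$ is $1+\alpha\beta$, and $\alpha+\beta=0$ forces $\alpha\beta=-1$, so this coefficient vanishes and the degree falls to at most $|e_1|+\delta-4$; Torres then yields only $g\ge \frac12(|p_1|+\delta)-2$, one short of the asserted value. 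Concretely, for $K(2,-3,-3,3)$ formula (\ref{conknot5}) gives $\cp=(z^2+1)^3\bigl[1-z^2/(z^2+1)\bigr]=(z^2+1)^2$, of degree $4$, whereas Theorem~\ref{knotgenus} claims $g=3$, which would require degree $6$ for the Torres bound to close the argument. So in this last case neither your proposal nor the paper proves minimality of the modified canonical surface (and the stated value of $g$ is itself in doubt there); any repair needs an input beyond the Conway polynomial degree, for instance Gabai's geometric computation of the genera of arborescent links cited in the introduction, or an identification of the first coefficient of $\cp_K$ that provably survives the cancellation.
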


\subsection{Genera of $n$-Pretzel links}

Intuitively, if we have more even $p_i$'s with opposite directions,
then we will have a surface of smaller genus. So we want to choose an orientation
which forces all the even $p_i$'s to move in the opposite directions,
but this may not be possible for all cases.

\begin{figure}
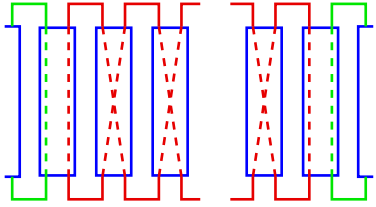
 $$\pspicture[.2](-3.5,-1.5)(3.5,1.5)
\psframe(-.25,-1)(.25,1) \psframe(-1,-1)(-.5,1)
\psframe(-1.75,-1)(-1.25,1) \psline(-2.2,1)(-2,1)(-2,-1)(-2.2,-1)
\psframe(1,-1)(1.5,1) \psframe(1.75,-1)(2.25,1)
\psline(2.7,1)(2.5,1)(2.5,-1)(2.7,-1)
\psline[linecolor=emgreen](-2.1,1)(-2.1,1.3)(-1.65,1.3)(-1.65,1)
\psline[linecolor=emgreen,linestyle=dashed](-1.65,1)(-1.65,-1)
\psline[linecolor=darkred,linestyle=dashed](-1.35,1)(-1.35,-1)
\psline[linecolor=emgreen](-2.1,-1)(-2.1,-1.3)(-1.65,-1.3)(-1.65,-1)
\psline[linecolor=darkred](-1.35,1)(-1.35,1.3)(-.9,1.3)(-.9,1)
\psline[linecolor=darkred,linestyle=dashed](-.9,1)(-.6,-1)
\psline[linecolor=darkred,linestyle=dashed](-.9,-1)(-.6,1)
\psline[linecolor=darkred](-1.35,-1)(-1.35,-1.3)(-.9,-1.3)(-.9,-1)
\psline[linecolor=darkred](-.6,1)(-.6,1.3)(-.15,1.3)(-.15,1)
\psline[linecolor=darkred,linestyle=dashed](-.15,1)(.15,-1)
\psline[linecolor=darkred,linestyle=dashed](-.15,-1)(.15,1)
\psline[linecolor=darkred](-.6,-1)(-.6,-1.3)(-.15,-1.3)(-.15,-1)
\psline[linecolor=darkred](.4,1.3)(.15,1.3)(.15,1)
\psline[linecolor=darkred](.4,-1.3)(.15,-1.3)(.15,-1)
\psline[linecolor=darkred](.8,1.3)(1.1,1.3)(1.1,1)
\psline[linecolor=darkred,linestyle=dashed](1.1,1)(1.4,-1)
\psline[linecolor=darkred,linestyle=dashed](1.1,-1)(1.4,1)
\psline[linecolor=darkred](.8,-1.3)(1.1,-1.3)(1.1,-1)
\psline[linecolor=darkred](1.4,1)(1.4,1.3)(1.85,1.3)(1.85,1)
\psline[linecolor=darkred,linestyle=dashed](1.85,1)(1.85,-1)
\psline[linecolor=darkred](1.4,-1)(1.4,-1.3)(1.85,-1.3)(1.85,-1)
\psline[linecolor=emgreen](2.15,1)(2.15,1.3)(2.6,1.3)(2.6,1)
\psline[linecolor=emgreen,linestyle=dashed](2.15,1)(2.15,-1)
\psline[linecolor=emgreen](2.15,-1)(2.15,-1.3)(2.6,-1.3)(2.6,-1)
\endpspicture $$
\caption{Boundary orientation of $L(p_1, p_2, \ldots , p_n)$.}
\label{orienpretlink}
\end{figure}

\begin{lem}
Let $L(p_1,p_2, \ldots, p_n)$ be an $n$-pretzel link and let $s$ be
the number of even $p_i$'s. If $n-s$ is even, then there exists an
orientation of $L$ such that the link $L$ moves in opposite
directions in all boxes of even $p_i$. If $n-s$ is odd and a given
$p_t$ is  even, then there exists an orientation of $L$ such that
the link $L$ moves in opposite directions in all boxes of even
$p_i$'s  except the one corresponding to $p_t$.
\label{orienlem}\end{lem}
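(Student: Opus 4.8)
The plan is to encode an orientation of $L$ by $\mathbb{F}_2$-valued signs at the four endpoints of each twist box and to translate the phrase ``the two strands run in opposite directions in the box of $p_i$'' into a single linear equation. Label the top endpoints of that box by $\mathrm{TL}_i,\mathrm{TR}_i$ and the bottom ones by $\mathrm{BL}_i,\mathrm{BR}_i$, and for a given orientation record $X_i$ (resp.\ $Y_i,U_i,V_i$), equal to $0$ when the strand at $\mathrm{TL}_i$ (resp.\ $\mathrm{TR}_i,\mathrm{BL}_i,\mathrm{BR}_i$) points into the box and $1$ otherwise. One checks directly, in both the even and the odd case, that the box of $p_i$ is an opposite box exactly when $d_i:=X_i+Y_i=1$ in $\mathbb{F}_2$. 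Consistency of the orientation is then equivalent to three families of $\mathbb{F}_2$-linear relations: a pass-through relation inside each box (expressing $U_i,V_i$ in terms of $X_i,Y_i$, with two subcases according to the parity of $p_i$), the relation $X_{i+1}+Y_i=1$ coming from each top connecting arc $\mathrm{TR}_i$--$\mathrm{TL}_{i+1}$ with cyclic indices, and the analogous relations on the bottom arcs $\mathrm{BR}_i$--$\mathrm{BL}_{i+1}$.

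First I would read off a global parity constraint from the top arcs alone. Since $X_i=Y_{i-1}+1$, we get $d_i=Y_{i-1}+Y_i+1$, and the cyclic sum $\sum_i (Y_{i-1}+Y_i)$ telescopes to $0$, so $\sum_{i=1}^n d_i\equiv n \pmod 2$. Hence prescribing a target pattern $(d_i)$ is the same as prescribing the jumps $c_i:=d_i+1$ of the sequence $(Y_i)$ via $Y_i=Y_{i-1}+c_i$, and such a cyclic sequence exists precisely when $\sum_i c_i\equiv 0$, i.e.\ when $\sum_i d_i\equiv n$. Next I would substitute the box relations into the bottom-arc relations and run the four adjacency cases between consecutive boxes, EE, EO, OE and OO. The outcome is that every junction meeting an odd box forces $c=1$ there (equivalently $d=0$, a same box), a junction between two even boxes imposes nothing, and the OO relation propagates $c=1$ along a run of consecutive odd boxes. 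Because there is at least one even box, every maximal run of odd boxes abuts an even box, so all odd boxes are forced to be same boxes; crucially, the jumps $c_i$ at the even boxes survive completely free apart from the single cyclic parity $\sum_i c_i\equiv 0$.

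It then remains to choose the pattern. With $s$ even boxes and $n-s$ odd boxes (each contributing $c=1$), the only surviving condition on the even-box data is $\sum_{p_i\ \mathrm{even}} c_i\equiv n-s\pmod 2$. When $n-s$ is even I set $c_i=0$ (opposite) at every even box; when $n-s$ is odd I set $c_i=0$ at every even box except the prescribed $p_t$, where I put $c_t=1$. In either case the parity is met, so propagating $Y_i=Y_{i-1}+c_i$ (with $c_i=1$ at all odd boxes) from an arbitrary value of $Y_1$, and then setting $X_i=Y_{i-1}+1$ and recovering $U_i,V_i$ from the box relations, yields a consistent orientation realizing exactly the desired opposite boxes. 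The hard part will be the second step: fixing the sign conventions so that ``opposite'' is cleanly $X_i+Y_i=1$, and then carrying out the four-case bottom-arc computation (including the wrap-around at the box of $p_n$) without sign or index errors, since this is exactly what pins the odd boxes as rigid while leaving the even boxes free. The remaining bookkeeping is routine.
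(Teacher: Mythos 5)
Your proof is correct; I verified the four-case bottom-arc computation you flagged as the delicate step (with your conventions the EE junction is a tautology, the EO and OE junctions force $d=0$ at the odd box involved, and the OO junction forces $d_i=d_{i+1}$), and your sign convention does make ``opposite'' correspond to $d_i=1$, so the global parity $\sum_i d_i\equiv n\pmod 2$ lands exactly on the two cases of the lemma. The paper's own proof is a two-sentence sketch organized quite differently: it traces the boundary orientation locally, observing that the parity of the number of odd boxes between two consecutive even boxes decides how the orientation returns (Figure~\ref{orienpretlink}), and then orients the link component by component, exempting the single box $p_t$ when the total number of odd boxes is odd. The underlying parity mechanism is the same, but your global $\mathbb{F}_2$-linear formulation buys strictly more: it characterizes \emph{all} achievable same/opposite patterns (the odd boxes are rigidly ``same'', the even boxes are free modulo the one relation $\sum_i d_i\equiv n$), and it proves as a byproduct the assertion, stated without proof at the start of Section~\ref{conoflink}, that for every orientation the link moves in the same direction in every odd box as soon as $s\ge 1$. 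The paper's version buys brevity and geometric transparency; yours buys a complete, checkable argument. (The only case where your appeal to ``at least one even box'' is unavailable is $s=0$, for which the lemma is vacuous.)
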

\begin{proof}
If all $p_{j}$ between two even $p_i$ and $p_k$ are odd, the number
of these $p_j$'s odd $(mod~2)$ will decide the boundary orientation
as depicted in Figure~\ref{orienpretlink}.

If the number of odd crossing boxes is even,
we can orient the link such that the link moves
oppositely in all boxes of even crossings.
Otherwise there is just one box for which the link moves
in the same direction.
So starting from $p_t$ will complete the proof.
\end{proof}

Let us denote the orientation we choose in Lemma~\ref{orienlem} by $O'$.
From Theorem~\ref{conwayoflink}, we can do almost the same
comparison by using equation (\ref{genineqal}). But we have to be
careful to use (\ref{genineqal}) for links. Since it was defined for
oriented links, we can interpolate it as follows.

\begin{align*}
g(L)= ~\mathrm{min}_{O}\{\mathrm{min} & \{\mathrm{genus}~
\mathrm{of}~ \F_{(L,O)}~ |~ \F_{L,O}~\mathrm{is}~
\mathrm{a}~\mathrm{Seifert}~\mathrm{surface}
\\\notag & \mathrm{with}~ \mathrm{the}~ \mathrm{orientation}~ O\}\}.
\end{align*}
where the first $O$ runs over all possible orientations of $L$. So
(\ref{genineqal}) gives us an inequality on the second minimum of the
fixed orientation $O$ and $\cp_{(L,O)}$.

We divide into two cases : $i)$ $n-s$ is even, $ii)$ $n-s$ is
odd. For the first case, $n-s$ even, we can see that the degree of
$\cp_{L(p_1,p_2,\ldots ,p_n)}$ is $$s+\prod_{i=1}^{n-s}
\mathrm{degree}(\cp_{p_{m_i}})+1 = s+\sum_{i=1}^{n-s}(|p_{m_i}|-1)
+1,$$ and the coefficient of this leading term is $-\alpha /2$ from
Theorem~\ref{conwayoflink}.

Suppose $\alpha$ is nonzero. Then the Seifert surface $\F$ obtained
by applying Seifert's algorithm with the fixed orientation $O'$ is
a minimal genus surface of $(L,O')$. Let us find the genus of
the Seifert surface $\F_{(L,O')}$.

\begin{align*}
2g(\F_L) &=2-\chi (\F_L) = 2-(V-E+F)\\\notag
     &=2-(n-s)+(\sum_{i=1}^{n-s} (|p_{m_j}|-1)) + [\sum_{j=1}^{s}|p_{i_{j}}| +
          \sum_{i=1}^{n-s}(|p_{m_{i}}|)] - s\\\notag
     &=2 + \sum_{i=1}^{n-s} (|p_{k_i}|-1)
     =\mathrm{degree} (\cp_L)-s+1.
\end{align*}

For the rest of the cases of the arguments are parallel to the argument for knots.
Next, we explain how $p_t$ will be chosen for the rest of the article.
\begin{rem}
First, we look at the minimum of the absolute value of $p_{e_i}$
over all even crossings. If the minimum is taken by the unique $p_{e_i}$ or by $p_{e_i}$'s of the
same sign, we choose it for $p_t$. If there are more than two $p_{e_i}$'s with
different signs and the same absolute value, then we look at the value $\alpha$, the sign sum of
odd crossings. If it is neither $1$ nor $-1$, then we pick the positive
one for $p_t$. If $\alpha =1(-1)$, pick the negative(positive) one
for $p_t$. \label{defpt}
\end{rem}

For the second case, $n-s$ odd, we find $p_t$ as described the above.
For the last two cases, we will drop the genus by $1$.
Denote the orientation we chose here by $O_1$.

\begin{lem}
For an arbitrary orientation  $O$, $\mathrm{degree} \cp_{(L,O)} \ge
\mathrm{degree} \cp_{(L,O_1)}$. \label{orengenuslem}\end{lem}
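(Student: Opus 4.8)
The plan is to read off how $\deg_z\cp_{(L,O)}$ depends on the orientation $O$ directly from Theorem~\ref{conwayoflink2}, and then to verify that the orientation $O_1$ prescribed in Remark~\ref{defpt} minimizes it. Since $\deg_z\cp_{T_{(2,p)}}=|p|-1$, every odd box $p_{o_j}$ contributes $|p_{o_j}|-1$ to the degree, each same-direction even box contributes a factor of degree $|p_{e_i}|-1$, and each opposite even box contributes only the monomial $(-l_i)z$. Writing $T$ for the set of even boxes in which the two strands run in opposite directions and keeping the leading term of the bracket $[-\tfrac{\alpha+\beta}{2}z+\cdots]$ (which has degree $1$ exactly when the balance $\tfrac{\alpha+\beta}{2}$ is nonzero), I expect the ``generic'' degree to be
\[
M(O)=C-\sum_{i\in T}(|p_{e_i}|-2),\qquad
C=\sum_{j}(|p_{o_j}|-1)+\sum_{i}(|p_{e_i}|-1)+1,
\]
with $C$ independent of $O$. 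Because every even box has $|p_{e_i}|\ge 2$, moving a box into $T$ never increases $M(O)$ and strictly decreases it when $|p_{e_i}|>2$; hence $M$ is minimized by making as many even boxes opposite as possible, and at the minimum the only freedom lies in the boxes with $|p_{e_i}|=2$.

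Next I would invoke Lemma~\ref{orienlem} for realizability. If $n-s$ is even one may take $T$ to be all even boxes, so $M$ is minimal; if $n-s$ is odd exactly one even box must remain same-direction, and to keep $M$ minimal one leaves out the box $p_t$ of smallest $|p_{e_i}|$, which is exactly the primary rule of Remark~\ref{defpt}. This yields $M(O)\ge M(O_1)$ for every realizable $O$.

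I then have to account for the balance correction. When $\alpha+\beta=0$ the degree-$1$ term of the bracket cancels, and exactly as in the proof of Theorem~\ref{knotgenus} (the reduction of $-lz[\cdots]$ via $z\cp_{o'}=\mathrm{sign}(o)(\cp_{|o|}-\cp_{|o|-2})$ and $\sum\mathrm{sign}(o)=0$) the true degree drops below $M(O)$; I write $\deg_z\cp_{(L,O)}=M(O)-\delta(O)$ with $\delta(O)\ge 1\iff\alpha+\beta=0$. Here $\alpha$ is fixed by $L$, while $\beta$ is the signed count of the same-direction even boxes, so $O_1$ is built so that, among the orientations realizing the minimal $M$, it also forces $\alpha+\beta=0$ whenever possible — this is precisely the secondary rule of Remark~\ref{defpt}, which takes $\mathrm{sign}(p_t)=-\alpha$ when $\alpha=\pm1$. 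Combining the two steps gives, for every realizable $O$, $\deg_z\cp_{(L,O)}=M(O)-\delta(O)\ge M(O_1)-\delta(O_1)=\deg_z\cp_{(L,O_1)}$.

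The hard part will be the interaction between the balance correction $\delta$ and the realizability constraint: the inequality can only fail if some realizable $O$ has $M(O)=M(O_1)$ yet $\delta(O)>\delta(O_1)$, that is, $O$ attains $\alpha+\beta=0$ while $O_1$ does not. Ruling this out requires the parity restriction implicit in Lemma~\ref{orienlem} — the number of same-direction even boxes is congruent to $n-s \pmod 2$ — together with the observation that altering $\beta$ without changing $M$ forces one to exchange $|p_{e_i}|=2$ boxes in and out of $T$ in pairs, so the values of $\beta$ achievable at minimal $M$ are exactly those the tie-breaking of Remark~\ref{defpt} already selects among. I expect showing that this tie-breaking is exhaustive, i.e. that no realizable minimal-$M$ orientation can beat $O_1$ on the balance, to be the technical heart of the argument, dispatched by a short case analysis on the signs and absolute values of the even entries.
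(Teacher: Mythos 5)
Your first two steps track the paper's own proof: the published argument consists precisely of the observations that $t_O\le t_{O_1}$ and that each additional opposite-direction even box changes the degree by $-(|p_{e_i}|-2)\le 0$, which is your inequality $M(O)\ge M(O_1)$. Your refinement --- comparing the sums $\sum_{i\in T}(|p_{e_i}|-2)$ rather than the bare counts $t_O$, using the minimality of $|p_t|$ from Remark~\ref{defpt} --- is genuinely needed and is at best implicit in the paper. So up to that point you are doing the same thing, more carefully.

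The genuine gap is exactly the step you defer as the ``technical heart'': ruling out a realizable $O$ with $M(O)=M(O_1)$ but $\alpha+\beta_O=0\neq\alpha+\beta_{O_1}$. This cannot be ruled out, and the paper's proof does not address it at all. Take $L(3,3,-2,-2)$: here $n-s=2$ is even, $\alpha=2$, $O_1$ makes both even boxes opposite, $\beta_{O_1}=0$, and Theorem~\ref{conwayoflink} gives $\cp_{(L,O_1)}=-z^{7}+z^{3}+2z$, of degree $7$. The link has two components and hence exactly one other orientation $O$, obtained by reversing the component that runs once through each even box; tracing the strands shows this makes both even boxes same-direction, so $O$ is realizable, $M(O)=M(O_1)=7$ because $|{-2}|-2=0$, but $\beta_O=-2$ and $\alpha+\beta_O=0$. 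Theorem~\ref{conwayoflink2} with $t=0$ then yields $\cp_{(L,O)}=-2z(z^{2}+1)$, of degree $3$ (the coefficients of $z$, namely $+2$ and $-2$, agree with the linking numbers under the two orientations, which confirms both computations). Thus $\deg\cp_{(L,O)}=3<7=\deg\cp_{(L,O_1)}$ and the Lemma fails. For $n-s$ even the tie-breaking of Remark~\ref{defpt} is never invoked, so it cannot repair this; analogous examples exist for $n-s$ odd whenever $|\alpha|\ge 3$ and enough even entries equal $-\mathrm{sign}(\alpha)\cdot 2$. Note also that the cancellation at $\alpha+\beta=0$ can lower the degree by much more than your $\delta(O)\ge 1$ (here by $4$), since the subleading terms $\sum\cp_{p'}/\cp_{p}$ cancel further. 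The upshot is that this is not merely an unfinished step in your write-up: the statement itself, and with it the lower-bound half of Theorem~\ref{linkgenus} that rests on it, needs an additional argument (the inequality~(\ref{genineqal}) applied to the bad orientation $O$ no longer certifies that the canonical surface for $O_1$ has minimal genus).
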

\begin{proof}
If we count $t_O$, the number of even crossings in which the link
moves in the opposite directions with respect to $O$, we can see that
$t_O \le t_{O_{1}}$. If we look at the Conway polynomial in
Theorem~\ref{conwayoflink2}, we have that $i)$ we can ignore the
second term, $ii)$ increasing $t$ by $1$ will change the degree of the
second term by $-(|p_i|-2)$, and by hypothesis, $|p_i|\ge 2$.
\end{proof}

\begin{thm}
Let $L(p_1,o_2, \ldots, o_s, e_{s+1}, \ldots, e_n)$ be an
$n$-pretzel link with at least one even $p_i$. Let $\alpha $ $=
\sum_{i=2}^{n-s}$ $ sign(p_{o_i})$ and $\beta = sign(p_t)$. Suppose
$|o_i|,|e_j| \ge 2$. Let $p_t$ be the integer described in
Remark~\ref{defpt}. Let $l$ be the number of even $p_i$'s. Let
$$\delta=\sum_{i=2}^{n-s}(|o_{i}|-1).$$ Then the genus $g(L)$ of $L$ is

$$g(L) = \left\{
\begin{array}{cl}
\frac12~\delta +1 & ~~\mathrm{if}~ n-s~
\mathrm{is}~\mathrm{even}~\mathrm{and}~ \alpha \neq 0, \\
 \frac12\delta & ~~\mathrm{if}~ n-s~
\mathrm{is}~\mathrm{even}~\mathrm{and}~
 \alpha = 0, \\
 \frac12(|p_t|+\delta) & ~~\mathrm{if}~ n-s
~\mathrm{is}~\mathrm{odd}~\mathrm{and}~\alpha + \beta \neq 0, \\
\frac12(|p_t|+\delta)-1
  & ~~\mathrm{if}~ n-s ~\mathrm{is}~\mathrm{odd}~\mathrm{and}~
 \alpha + \beta = 0.
\end{array} \right.
$$
 \label{linkgenus}
\end{thm}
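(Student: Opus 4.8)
The plan is to follow the same two-sided strategy used for knots in Theorem~\ref{knotgenus}: extract a lower bound for $g(L)$ from the Torres inequality~(\ref{genineqal}) applied to the Conway polynomials of Theorem~\ref{conwayoflink}, and then match it with the genus of an explicit canonical Seifert surface. Throughout I work with the orientation $O_1$ singled out by Lemma~\ref{orienlem} and Remark~\ref{defpt}, and I use that $L$ has $\mu=s$ components together with the identity $\mathrm{degree}\,\Delta_L=\mathrm{degree}_z\,\cp_L$ (only the top $z$-term of $\cp_L$ produces the extreme powers of $t$), so that~(\ref{genineqal}) reads $2g(L,O)\ge \mathrm{degree}\,\cp_{(L,O)}-s+1$ for every orientation $O$.

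First I would establish the lower bound. Since $g(L)=\min_O g(L,O)$ and Torres gives $g(L,O)\ge \tfrac12(\mathrm{degree}\,\cp_{(L,O)}-s+1)$ for each $O$, Lemma~\ref{orengenuslem}, which asserts that $O_1$ minimizes $\mathrm{degree}\,\cp_{(L,O)}$, yields
$$g(L)\ \ge\ \tfrac12\bigl(\mathrm{degree}\,\cp_{(L,O_1)}-s+1\bigr).$$
It then remains to read off $\mathrm{degree}\,\cp_{(L,O_1)}$ from Theorem~\ref{conwayoflink}. In each displayed formula the bracket is $\pm\tfrac{\alpha}{2}z$ (resp. $\pm\tfrac{\alpha+\beta}{2}z$) plus a sum of ratios $\cp_{p'}/\cp_{p}$ of $z$-degree $-1$, so the leading behaviour is governed by the $z$-term: when $\alpha\ne0$ (resp. $\alpha+\beta\ne0$) the bracket has $z$-degree $1$, and when $\alpha=0$ (resp. $\alpha+\beta=0$) this term cancels and, using the recursion $z\,\cp_{m-1}=\cp_{m}-\cp_{m-2}$ and $\sum\mathrm{sign}(o_i)=\alpha=0$ exactly as in the knot computation, the surviving part $-l\sum_i \cp_{|o_i|-2}/\cp_{|o_i|}$ has $z$-degree $-1$. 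In each of the four cases this pins down $\mathrm{degree}\,\cp_{(L,O_1)}$ and, after subtracting $s-1$, gives precisely $\tfrac12\delta+1$, $\tfrac12\delta$, $\tfrac12(|p_t|+\delta)$ and $\tfrac12(|p_t|+\delta)-1$.

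Next I would produce the matching upper bound. When the relevant sign sum is nonzero, Seifert's algorithm applied to the standard diagram of Figure~\ref{allnpretzel} with orientation $O_1$ already yields a surface $\F_{(L,O_1)}$ realizing the bound; its genus is obtained from $g=\tfrac12(2-V+E-F)$ with $F=s$ via the Seifert-circle count, giving $2g(\F_L)=\mathrm{degree}\,\cp_L-s+1$, exactly the Euler-characteristic computation carried out in the paragraphs preceding the theorem. When the sign sum is zero the canonical surface is one unit too large, and I would instead apply the local move of Figure~\ref{minsurnpretzel} to the outermost cancelling pair of odd bands (absorbing the even boxes into the move in the $n-s$ odd case): this raises $V$ by two while leaving $E$ and $F$ fixed, hence drops the genus by one and realizes the reduced bound. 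Since the resulting surface is compatible with $O_1$ and attains the lower bound, we get $g(L,O_1)=\tfrac12(\mathrm{degree}\,\cp_{(L,O_1)}-s+1)$; combining with the displayed inequality forces $g(L)=g(L,O_1)$ and the stated formula.

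The main obstacle is the degree bookkeeping in the two vanishing cases. One must verify that cancellation of the $z$-term lowers the degree by exactly two (no more), i.e. that the new leading coefficient $-l\sum_i \cp_{|o_i|-2}/\cp_{|o_i|}$ genuinely survives and that the trailing $z^{s-1}$ (resp. $z^{s-2}$) term of Theorem~\ref{conwayoflink} does not overtake it; the hypothesis $|o_i|,|e_j|\ge2$, which forces $\delta>0$ whenever odd bands are present, is exactly what guarantees this, the all-even degenerate case being governed by the trailing term. A secondary point needing care is that the move of Figure~\ref{minsurnpretzel} can always be set up—one must locate a pair of odd bands $o_i,o_j$ whose intervening signs sum to zero—and that the induction on the number of such pairs terminates at a diagram on which Seifert's algorithm is genus-minimizing, which is the link analogue of the inductive construction already described before Theorem~\ref{knotgenus}.
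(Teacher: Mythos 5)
Your proposal is correct and follows essentially the same route as the paper: the paper's one-line proof ("It follows from Theorem~\ref{conwayoflink}, \ref{conwayoflink2} and Lemma~\ref{orengenuslem}") is fleshed out in the preceding subsection exactly as you describe, namely a lower bound from inequality~(\ref{genineqal}) applied to the degree of the Conway polynomial with the orientation minimized via Lemma~\ref{orengenuslem}, matched by the canonical Seifert surface of Figure~\ref{allnpretzel} (modified by the move of Figure~\ref{minsurnpretzel} when $\alpha$, resp.\ $\alpha+\beta$, vanishes). The obstacles you flag (survival of the new leading coefficient after cancellation, and the inductive setup of the cancelling-pair move) are precisely the points the paper itself leaves to the reader as "parallel to the argument for knots."
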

\begin{proof}
It follows from Theorem~\ref{conwayoflink}, \ref{conwayoflink2} and Lemma~\ref{orengenuslem}.
\end{proof}

\section{The basket numbers of pretzel links} \label{appl}

First let us recall a definition of the basket number.
Let $A_n \subset \mathbb{S}^3$ denote an $n$-twisted unknotted
annulus.
A Seifert surface $\F$ is a basket surface if $\F =
D_2$ or if $\F = \F_0
*_{\alpha} A_n$ which can be constructed by plumbing $A_n$ to a
basket surface $\F_0$ along a proper arc $\alpha \subset D_2 \subset \F_0$. A basket number of a link $L$,
denoted by $bk(L)$, is the minimal number of annuli used to obtain a
basket surface $\F$ such that $\partial \F=L$. For standard
definitions and notations, we refer to \cite{Rudolph:plumbing}.
Throughout the section, we will assume all links are not splitable,
$i. e.$, Seifert surfaces are connected. Otherwise, we can handle
each connected component separately.

For the basket number and the genus of a link, there is a useful
theorem.

\begin{thm} [\cite{BKP}]
Let $L$ be a link of $l$ components. Then
the basket number of $L$ is bounded by the genus and the canonical genus of $L$ as,
$$ 2g(L)+ l-1  \le
bk(L) \le 2g_c(L) +l-1.$$ \label{bkgenusthm1}
\end{thm}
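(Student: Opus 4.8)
The plan is to reduce the whole statement to a single combinatorial quantity, the first Betti number of a basket surface, and then to sandwich the minimal genus of a basket surface between $g(L)$ and $g_c(L)$. Since the paper assumes $L$ is non-split, every relevant Seifert surface is connected, so I may freely use $\chi(\F)=2-2g(\F)-l$ for a Seifert surface $\F$ with $\partial\F=L$ and $l=|L|$.

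First I would record the Euler characteristic bookkeeping for baskets. Plumbing an annulus $A_n$ onto a surface along a proper arc glues the two pieces along a square, so $\chi$ drops by exactly one: $\chi(\F_0*_\alpha A_n)=\chi(\F_0)+\chi(A_n)-1=\chi(\F_0)-1$. Starting from $\F=D_2$ with $\chi=1$ and plumbing $k$ annuli gives $\chi(\F)=1-k$, hence $b_1(\F)=1-\chi(\F)=k$. Combining with $\chi(\F)=2-2g(\F)-l$ yields that the number of annuli used equals $k=2g(\F)+l-1$ for \emph{every} basket surface. Consequently
$$ bk(L)=\min\{\,2g(\F)+l-1 : \F \text{ a basket surface with } \partial\F=L\,\}. $$
The lower bound is then immediate: a basket surface is in particular a Seifert surface of $L$, so its genus is at least $g(L)$; applying this to a basket surface realizing $bk(L)$ gives $bk(L)=2g(\F)+l-1\ge 2g(L)+l-1$.

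For the upper bound I would start from a diagram $D$ of $L$ whose canonical Seifert surface $F$ has minimal genus $g_c(L)$, and exhibit a basket surface with the same first Betti number. The surface $F$ is built from $s$ Seifert disks and $c$ twisted bands, one per crossing, with $b_1(F)=c-s+1=2g_c(L)+l-1$. Choosing a spanning tree $T$ of the Seifert graph (vertices the Seifert circles, edges the crossings), the union of the $s$ disks with the $s-1$ bands coming from $T$ is a thickened tree, i.e.\ an embedded disk $D_0$; because the Seifert circles are disjoint, hence nested, simple closed curves in the plane, $D_0$ sits as an unknotted disk and each of the remaining $c-(s-1)$ bands, completed by a proper arc in $D_0$, gives an annulus plumbed onto $D_0$. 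If these annuli are unknotted and the plumbings can be arranged flatly onto the single base disk $D_0\cong D_2$, then $F$ is realized as a basket with $b_1(F)$ annuli, giving $bk(L)\le b_1(F)=2g_c(L)+l-1$.

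The hard part will be exactly this last verification: turning the disk-with-bands description of a canonical Seifert surface into the nested-plumbing normal form demanded by the definition of a basket, where every annulus must be plumbed along a proper arc of the \emph{same} base disk $D_2$. One must check that the annuli produced by the non-tree bands have unknotted cores and that the planar nesting of the Seifert disks is compatible with a simultaneous flat arrangement of all the plumbing arcs. The Euler characteristic count and the lower bound are routine; the genuine content of the theorem is this conversion, and that is where I would expect to spend essentially all of the effort.
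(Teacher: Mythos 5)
This theorem is quoted from \cite{BKP}; the paper under review gives no proof of its own, so your proposal can only be measured against that source and on its own terms. Your Euler-characteristic bookkeeping is correct and is the right first move: plumbing an annulus along a square lowers $\chi$ by exactly one, so every basket presentation of a connected surface $\F$ with $\partial\F=L$ uses exactly $b_1(\F)=2g(\F)+l-1$ annuli, and the lower bound $bk(L)\ge 2g(L)+l-1$ follows at once because a basket surface is in particular a Seifert surface. That half of your argument is complete.

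The upper bound, however, is left with a genuine gap, and you have flagged it yourself: after choosing a spanning tree of the Seifert graph of a diagram realizing $g_c(L)$ and writing the canonical surface as a disk $D_0$ together with $2g_c(L)+l-1$ extra bands, your conclusion begins with ``\emph{if} these annuli are unknotted and the plumbings can be arranged flatly onto the single base disk, \emph{then}\dots''. That conditional is the entire content of the nontrivial half of the theorem. The definition of a basket is rigid: each annulus must be plumbed along a proper arc of the \emph{original} disk $D_2$, not merely Murasugi-summed somewhere onto the growing surface, and the plumbing squares must be positioned compatibly. Verifying that the non-tree bands of a canonical Seifert surface can be simultaneously put into this normal form --- unknotted cores, disjoint plumbing arcs all lying in $D_0$, a consistent order of plumbing --- is precisely what \cite{BKP} establishes, and nothing in your outline carries it out. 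As written, your argument proves the lower bound and reduces the upper bound to the assertion that every link bounds a basket surface of genus $g_c(L)$, which is the statement that needed proof in the first place.
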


Since we have found that a minimal genus surface of a pretzel link
$L$ of genus $g(L)$ can be obtained by applying Seifert algorithm on
a diagram of $L$, $i. e.$, $g(L)=g_c(L)$, we find that the basket
number of a pretzel link $L$ is $2g(L)+ l-1$, $i. e.$, $bk(L)=2g(L)+
l-1$.

\begin{thm}
Let $K(p_1,o_2,o_3, \ldots, o_n)$ be an $n$-pretzel knot with one
even $p_1$. Let $\alpha$ $= \sum_{i=2}^{n}$ $ sign(o_i)$ and $\beta
$$= sign(p_1)$. Suppose $|p_1|, |o_i| \ge 2$. Let
$$\delta=\sum_{i=2}^{n}(|o_{i}|-1).$$
Then the basket number $bk(K)$ of $K$,
$$bk(K) = \left\{
\begin{array}{cl}
\delta+2  & ~~\mathrm{if}~ n~
\mathrm{is}~\mathrm{odd}~\mathrm{and}~ \alpha \neq 0, \\
 \delta & ~~\mathrm{if}~ n~
\mathrm{is}~\mathrm{even}~\mathrm{and}~
 \alpha = 0, \\
 |p_1|+\delta & ~~\mathrm{if}~ n
~\mathrm{is}~\mathrm{even}~\mathrm{and}~\alpha + \beta \neq 0, \\
|p_1|+\delta-2
  & ~~\mathrm{if}~ n ~\mathrm{is}~\mathrm{even}~\mathrm{and}~
 \alpha + \beta = 0.
\end{array} \right.
$$
 \label{knotbk}
\end{thm}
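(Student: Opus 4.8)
The plan is to reduce everything to the genus computation already in hand and the general basket-number bound of Theorem~\ref{bkgenusthm1}. Since $K=K(p_1,o_2,\ldots,o_n)$ is a knot, it has $l=1$ component, so Theorem~\ref{bkgenusthm1} specializes to
$$2g(K) \le bk(K) \le 2g_c(K).$$
Thus the entire problem collapses once the two outer bounds are shown to agree, i.e.\ once $g(K)=g_c(K)$.

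First I would establish this equality of genera. The reverse inequality $g(K)\le g_c(K)$ is automatic from the chain $g(L)\le g_f(L)\le g_c(L)$ recorded in the introduction. For $g_c(K)\le g(K)$, I would invoke the work of Section~\ref{genera}: the minimal genus Seifert surface realizing $g(K)$ in Theorem~\ref{knotgenus} is constructed by applying Seifert's algorithm to the standard diagram of Figure~\ref{allnpretzel}, modified as in Figure~\ref{minsurnpretzel} in the degenerate cases $\alpha=0$ or $\alpha+\beta=0$. Any surface produced this way is by definition canonical, so $g_c(K)\le g(K)$. Hence $g(K)=g_c(K)$, the two bounds in Theorem~\ref{bkgenusthm1} coincide, and $bk(K)=2g(K)$ (this is exactly the reduction $bk(L)=2g(L)+l-1$ noted just before the statement, with $l=1$).

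It then remains only to double each of the four values of $g(K)$ from Theorem~\ref{knotgenus}. This gives $bk(K)=2\cdot\tfrac12(\delta+2)=\delta+2$ when $n$ is odd and $\alpha\neq 0$; $bk(K)=2\cdot\tfrac12\delta=\delta$ when $n$ is even and $\alpha=0$; $bk(K)=2\cdot\tfrac12(|p_1|+\delta)=|p_1|+\delta$ when $n$ is even and $\alpha+\beta\neq 0$; and $bk(K)=2\bigl(\tfrac12(|p_1|+\delta)-1\bigr)=|p_1|+\delta-2$ when $n$ is even and $\alpha+\beta=0$, matching the four cases of the claim. Because every ingredient is already available, I do not anticipate a genuine obstacle; the only point deserving care is the verification that the minimal genus surfaces of Section~\ref{genera} are truly canonical, since that is precisely what forces the upper and lower bounds of Theorem~\ref{bkgenusthm1} to collapse and turns the basket number into the stated arithmetic expression.
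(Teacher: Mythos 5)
Your proposal is correct and follows essentially the same route as the paper: the paper also obtains Theorem~\ref{knotbk} by combining Theorem~\ref{bkgenusthm1} with the observation that $g(L)=g_c(L)$ for pretzel links (since the minimal genus surfaces of Section~\ref{genera} arise from Seifert's algorithm applied to a diagram, hence are canonical), which collapses the two bounds to $bk(L)=2g(L)+l-1$, and then substitutes the genus values of Theorem~\ref{knotgenus} with $l=1$. Your explicit attention to why the modified diagrams of Figure~\ref{minsurnpretzel} still yield canonical surfaces is exactly the point the paper relies on implicitly.
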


\begin{thm}
Let $L(p_1,o_2, \ldots, o_s, e_{s+1}, \ldots, e_n)$ be an
$n$-pretzel link with at least one even $p_i$. Let $\alpha $ $=
\sum_{i=2}^{n-s}$ $ sign(p_{o_i})$ and $\beta = sign(p_t)$. Suppose
$|o_i|,|e_j| \ge 2$. Let $p_t$ be the integer described in
Remark~\ref{defpt}. Let $l$ be the number of even $p_i$'s. Let
$$\delta=\sum_{i=2}^{n-s}(|o_{i}|-1).$$ Then the basket number
$bk(L)$ of $L$,
$$bk(L) = \left\{
\begin{array}{cl}
\delta +l+1  & ~~\mathrm{if}~ n-s~
\mathrm{is}~\mathrm{even}~\mathrm{and}~ \alpha \neq 0, \\
\delta +l-1 & ~~\mathrm{if}~ n-s~
\mathrm{is}~\mathrm{even}~\mathrm{and}~
 \alpha = 0, \\
 |p_t|+\delta +l-1 & ~~\mathrm{if}~ n-s
~\mathrm{is}~\mathrm{odd}~\mathrm{and}~\alpha + \beta \neq 0, \\
|p_t|+\delta +l-3
  & ~~\mathrm{if}~ n-s ~\mathrm{is}~\mathrm{odd}~\mathrm{and}~
 \alpha + \beta = 0.
\end{array} \right.
$$
\label{linkbk}
\end{thm}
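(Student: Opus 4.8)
The plan is to obtain Theorem~\ref{linkbk} as an immediate consequence of the genus computation in Theorem~\ref{linkgenus} together with the basket--genus estimate of Theorem~\ref{bkgenusthm1}. The pivotal input, recorded in the remark following Theorem~\ref{bkgenusthm1}, is that an $n$-pretzel link satisfies $g(L)=g_c(L)$. When the genus and the canonical genus agree, the two-sided bound $2g(L)+l-1\le bk(L)\le 2g_c(L)+l-1$ collapses to the single identity $bk(L)=2g(L)+l-1$, where $l$ denotes the number of components of $L$. Since a pretzel link with at least one even $p_i$ has exactly as many components as it has even $p_i$'s, this $l$ is precisely the parameter named in the statement, and the theorem reduces to evaluating $2g(L)+l-1$.

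First I would make the equality $g(L)=g_c(L)$ explicit. The minimal genus Seifert surfaces found in Section~\ref{genera} are all constructed by applying Seifert's algorithm to the standard diagram of Figure~\ref{allnpretzel}, after the surgery moves of Figure~\ref{minsurnpretzel}; such surfaces are canonical by definition. Because each of these surfaces already realizes the genus dictated by the Torres inequality~(\ref{genineqal}) applied to the Conway polynomials of Theorem~\ref{conwayoflink} and Theorem~\ref{conwayoflink2}, it is simultaneously of minimal genus and canonical, giving $g_c(L)=g(L)$.

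With $bk(L)=2g(L)+l-1$ established, the conclusion is routine arithmetic across the four cases of Theorem~\ref{linkgenus}. For $n-s$ even with $\alpha\neq0$ we have $g(L)=\frac12\delta+1$, so $2g(L)+l-1=\delta+l+1$; for $n-s$ even with $\alpha=0$, $g(L)=\frac12\delta$ yields $\delta+l-1$; for $n-s$ odd with $\alpha+\beta\neq0$, $g(L)=\frac12(|p_t|+\delta)$ yields $|p_t|+\delta+l-1$; and for $n-s$ odd with $\alpha+\beta=0$, $g(L)=\frac12(|p_t|+\delta)-1$ yields $|p_t|+\delta+l-3$. These match the four displayed formulae exactly.

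The step I expect to require the most care is not the arithmetic but the verification that $g(L)=g_c(L)$ holds in every case, i.e.\ that the orientation $O_1$ chosen through Remark~\ref{defpt} genuinely minimizes the degree of the Conway polynomial over all orientations of $L$. This is exactly the content of Lemma~\ref{orengenuslem}; once it guarantees that the canonical surface we exhibit is of minimal genus among all Seifert surfaces and for all orientations --- and not merely for the single fixed orientation used in the Conway computation --- the collapse of the Theorem~\ref{bkgenusthm1} bounds is automatic and the substitution above finishes the proof.
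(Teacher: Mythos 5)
Your proposal is correct and follows essentially the same route as the paper: the paper also obtains Theorem~\ref{linkbk} by noting that $g(L)=g_c(L)$ for pretzel links (from the canonical minimal genus surfaces of Section~\ref{genera}), so that the two-sided bound of Theorem~\ref{bkgenusthm1} collapses to $bk(L)=2g(L)+l-1$, and then substituting the four genus values of Theorem~\ref{linkgenus}. Your case-by-case arithmetic agrees with the stated formulas.
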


\section*{Acknowledgments}
The author would like to thank Cameron Gordon for helpful
discussion, valuable comments on this work. The
\TeX\, macro package PSTricks~\cite{PSTricks} was essential for
typesetting the equations and figures.

\end{document}